\definecolor{LightGray}{rgb}{.6,.6,.6}
\newcommand{\Fraisse}{Fra\"{i}ss\'{e}}
\newcommand{\defas}{:=} 
\newcommand{\st}{\,:\,} 
\DeclareMathOperator*{\forkindep}{\raise0.2ex\hbox{\ooalign{\hidewidth$\vert$\hidewidth\cr\raise-0.9ex\hbox{$\smile$}}}}
\DeclareMathOperator{\ar}{ar}
\newcommand{\dcl}{\operatorname{dcl}}
\newcommand{\tp}{\operatorname{tp}}
\DeclareTextCommand{\DZ}{OT2}{D2}
\newcommand{\dfs}{\mathrm{dfs}}
\newcommand{\Rado}{\mathrm{Rado}}
\newcommand{\Lc}{\mathcal{L}}
\newcommand{\weak}{adequate}
\newcommand{\strong}{excellent}
\newtheorem*{claim-star}{Claim}
\newtheorem*{theorem-non}{Theorem}
\newtheorem{theorem}{Theorem}[section] 
\newtheorem{lemma}[theorem]{Lemma}
\newtheorem{prop-def}[theorem]{Proposition-Definition}
\newtheorem{corollary}[theorem]{Corollary}
\newtheorem{fact}[theorem]{Fact}
\newtheorem{fact-eh}[theorem]{Fact(?)}
\newtheorem{question}[theorem]{Question}
\newtheorem{proposition}[theorem]{Proposition}
\newtheorem{proposition-eh}[theorem]{Proposition(?)}
\newtheorem*{theorem-star}{Theorem}
\newtheorem*{conjecture-star}{Conjecture}
\newtheorem*{lemma-star}{Lemma}
\theoremstyle{definition}
\newtheorem{definition}[theorem]{Definition}
\newtheorem{remark}[theorem]{Remark}
\newtheorem{convention}[theorem]{Convention}
\newtheorem{problem}[theorem]{Problem}
\theoremstyle{remark}
\newtheorem{claim}[theorem]{Claim}
\newtheorem*{warning}{Warning}
\newcommand{\inv}{\mathrm{inv}}
\newcommand{\supp}{\mathrm{supp}}
\newcommand{\str}{\mathrm{Str}}
\newcommand{\Nats}{\mathbb{N}}
\newcommand{\Leb}{\mathfrak{m}}
\DeclareMathOperator{\Sym}{Sym}
\newcommand{\defn}[1]{\textbf{#1}}
\newcommand{\w}{\omega}
\title{Generic sampling and invariant measures on the space of $k$-uniform hypergraphs}
\author[N. Ackerman]{Nathanael Ackerman}
\address{Harvard University\\
Cambridge, MA,
USA}
\email{nate@aleph0.net}
\author[C. Freer]{Cameron Freer}
\address{Massachusetts Institute of Technology\\
Cambridge, MA,
USA}
\email{freer@mit.edu}
\author[K. Gannon]{Kyle Gannon$^{\dagger}$}
\thanks{$^{\dagger}$ Corresponding author; Supported by the Fundamental Research Funds for the Central Universities, Peking University, grant no. 7100604835 and by the National Natural Science Fund of China, grant no. 12501001}
\address{$^{\dagger}$ Beijing International Center for Mathematical Research (BICMR) \\ Peking University \\ Beijing, China.}
\email{kgannon@bicmr.pku.edu.cn} 
\author[J. Hanson]{James E. Hanson}
\address{ Iowa State University \\
  Ames, IA, USA }
\email{jameseh@iastate.edu} 
\author[R. Patel]{Rehana Patel}
\address{Wesleyan University\\
Middletown, CT,
USA}
\email{rpatel@wesleyan.edu}
\newcommand{\kmpow}[1]{\mathbb{P}_{#1}}
\newcommand{\EmptySet}{\emptyset}
\begin{document}

\begin{abstract}
We prove a model-theoretic representation theorem for the distribution of an ergodic 
exchangeable $k$-uniform hypergraph: 
every such measure arises as the pushforward of the countably-iterated Morley product of a global Borel-definable Keisler measure over the countable universal homogeneous $k$-uniform hypergraph. We show this by starting with a Borel $k$-hypergraphon $W$ and constructing a Keisler measure $\mu_{W}$ such that
generic sampling with respect to $\mu_{W}$ yields the same 
$\Sym(\mathbb{N})$-invariant measure 
as does the standard hypergraphon sampling procedure 
with respect to $W$. When $k = 2$, our results give a new representation theorem for ergodic exchangeable graphs via Keisler measures over a monster model of the Rado graph. 
\end{abstract}

\maketitle

\section{Introduction}

In many areas of mathematics, a key role is played by 
$\Sym(\mathbb{N})$-invariant measures on the space of countable $\mathcal{L}$-structures (for a fixed countable language $\mathcal{L}$), where
$\Sym(\mathbb{N})$ acts via the logic action 
by permuting the underlying set. Representation theorems for such invariant measures have been studied since the foundations of modern probability theory.
Each of these representation theorems 
arises from a somewhat different perspective on invariant measures, and thereby provides new insights.
The first such result was in the 1930s, due to de Finetti \cite{deFinetti}, in his celebrated representation theorem about exchangeable sequences, which plays a foundational role in Bayesian statistics \cite{MR0440641}.
This was extended in the 1970s by Aldous \cite{MR637937} and Hoover \cite{Hoover79} to a representation for finite-dimensional arrays.
In the 2000s, Lov\'asz and others \cite{lovasz2012large} used deep results in combinatorics, such as Szemer\'edi's regularity lemma \cite{MR369312}, to provide a representation via \emph{graphons} for invariant measures on the spaces of graphs.
A slightly different representation, which extends to hypergraphs, was provided by Elek and Szegedy  \cite{MR2964622} using ultraproducts.

Invariant measures play an important role in connecting probability and logic. In particular, work by Gaifman in the 1960s \cite{gaifman1964concerning}, extended by Scott and Krauss \cite{ScottKrauss}, viewed $\Sym(\mathbb{N})$-invariant measures as ``symmetric probabilistic structures'', and began the study of their model theory.
Recent work by Ackerman, Freer, and Patel has shown that many results of infinitary logic have analogues for such probabilistic structures \cite{complete-classification}.

Gannon and Hanson \cite{gannon2024model}
introduced the method of \emph{generic sampling}, which can be thought of as a model-theoretic sampling procedure over a type space corresponding to a certain structure. Given a global Borel-definable Keisler measure $\mu$, one can iterate this measure with respect to the Morley product countably many times to produce a measure on the type space with infinitely many free variables. While the Morley product of a Keisler measure with itself is not always invariant under permutation of variables, when it is, iterating this product on said measure gives rise to a $\Sym(\mathbb{N})$-invariant measure.
 
In this paper we show that every ergodic $\Sym(\mathbb{N})$-invariant measure on the space of $k$-uniform hypergraphs (for fixed $k$) arises via the one described in the previous paragraph. This yields a new representation theorem for such invariant measures, providing additional ties to model theory. These results are new even in the case of graphs.

\subsection{Background}

\emph{Generic sampling}, which was introduced in \cite{gannon2024model}, can be thought of as a model-theoretic sampling procedure over a certain type space corresponding to a first-order structure. Given a global Borel-definable Keisler measure $\mu$, one iterates this measure with respect to the Morley product countably many times to produce a measure $\kmpow{\mu}$ on the type space in countably many variables. It is important to note that the measure $\kmpow{\mu}$ is not the standard product measure $\mu^{\omega}$ on the product of type spaces. The measure $\kmpow{\mu}$ is over a larger space which admits a canonical continuous surjection onto the standard product, whose pushforward along this map is $\mu^{\omega}$ (see \cite[Proposition 3.3]{gannon2024model}). The main purpose of working within this larger space is to ensure that (hyper)edge relations, as well as more complex definable relations, are measurable. This construction often, but not always, results in a 
measure $\kmpow{\mu}$ that is not invariant under permutation of variables. This is generally a feature, not a bug. In the model theory vernacular, one can plausibly interpret this procedure as generating a \emph{random Morley sequence} relative to the fixed measure $\mu$. Since Morley sequences outside of the context of stable theories are often non-commutative objects, many of the properties of $\kmpow{\mu}$ are consistent with our usual model-theoretic intuition.

The main result of this paper is a representation theorem for exchangeable hypergraphs. This result 
demonstrates that sampling with respect to hypergraphons can be encoded in the framework of generic sampling over certain type spaces. These type spaces are over monster models of the theory of the countable universal homogeneous $k$-uniform hypergraph --- and in the case of graphons, over a monster model of the theory of the Rado graph. More explicitly, we let $T_{k}$ be the theory of the countable universal homogeneous $k$-uniform hypergraph, i.e., $T_{k}$ is the theory of the \Fraisse\ limit of all finite $k$-uniform hypergraphs in the language with a single $k$-ary relation symbol. When $k=2$, the theory $T_{2}$ is just the theory of the Rado graph. Let $\mathcal{U}$ be a monster model of $T_{k}$ and $M$ a countable elementary submodel. We show that if $W$ is a $k$-ary (Borel) hypergraphon  
then there exists an \emph{excellent} global $M$-invariant Keisler measure $\mu_{W}$ such that generic sampling with respect to $\mu_{W}$ is equivalent to hypergraphon sampling with respect to $W$. We remark that \emph{excellence} is a technical term which essentially means that generic sampling with respect to $\mu_{W}$ is well-behaved and in particular {self-commutative}, i.e., invariant under permutation of variables. To be precise about the equivalence, if we let $\mathbb{G}(\mathbb{N}, W)$ be the $\Sym(\mathbb{N})$-invariant measure on the space of $k$-uniform hypergraphs on $\mathbb{N}$ generated by $W$, then 
\begin{equation*}
    g_{*}(\kmpow{\mu_{W}}) = \mathbb{G}(\mathbb{N}, W), 
\end{equation*}
where $g$ is the map taking a type $p$ to the induced structure on a realization of $p$, and $g_{*}$ denotes the pushforward along this map. 

We provide proofs for the representation theorem in both the setting of graphs (the case of graphons) and that of $k$-uniform hypergraphs (the case of hypergraphons) separately. We do this for two reasons: The proof of the hypergraph variant has both a conceptual and notational jump in complexity due to the fact that one must take into account non-trivial {lower-arity data} that is non-existent in the graph variant. Thus, firstly, if the reader is interested only in the graph variant of the theorem and not in the hypergraph one, then they do not need to decode a technical framework to understand the proof.  Secondly, even though the hypergraph variant has non-trivial notational complexity, the two proofs follow essentially the same rough path. Hence if the reader wants an overview of the hypergraph variant without getting bogged down in notation, they may start with the graph variant to have the general idea of the proof in mind.

The outline of the proof is as follows: 
\begin{enumerate}
    \item Given a Borel $k$-uniform hypergraphon $W$, we define a global $M$-invariant Keisler measure $\mu_{W}$ (Definition \ref{def:measure} for graphon variant; Definition \ref{def:measure2} for hypergraphon). Broadly speaking, we treat an arbitrary tuple $\bar{c}$ of parameters from outside of $M$ as if it had been randomly generated by sampling the hypergraphon, with the result of this sampling represented by the type of $\bar{c}$ over $M$. In the binary case, it is quite easy to verify that our construction results in a Keisler measure (Proposition \ref{prop:inv}). In the hypergraphon setting, this is a \emph{little nightmare} (Proposition \ref{prop:nightmare}; \cref{sec:nightmare-proof}). 
    \item We next demonstrate that $\mu_{W}$ is Borel-definable over a countable model and thus well-behaved with respect to the Morley product. This good behavior follows directly from general results about \emph{Keisler measures in the wild} \cite{CGH}, and in particular the observation that measures that are Borel-definable over a countable model, in a countable language, are \emph{tame}. This observation more or less follows from the fact that type spaces over countable models, again in a countable language, are Polish spaces. We recall that in general, arbitrary global Borel-definable measures over the random $k$-uniform hypergraphs can be \emph{arbitrarily wild} (e.g., failure of products being Borel-definable, failure of associativity) and so it is quite satisfying that these measures exist.  
    \item We then prove an important technical lemma which allows us to compute integrals of functions with respect to $\mu_{W}$ over finite collections of parameters from outside of $M$. In particular, this amounts to computing the Radon-Nikodym derivatives of restrictions of $\mu_{W}$ to sets of parameters of the form $M\bar{c}$ (see \cref{lemma:key}; \cref{lemma:key3}). From this lemma we derive that $\mu_{W}$ is excellent, i.e., self-commutes. As a consequence, the measure $\kmpow{\mu_{W}}$ is invariant under the natural action of $\Sym(\mathbb{N})$ (by permuting variables), and thus the pushforward of $\kmpow{\mu_{W}}$ to the space of ($k$-uniform hyper)graphs on $\mathbb{N}$ is $\Sym(\mathbb{N})$-invariant. 
    \item We then prove a second important technical lemma which allows us to compute the integral of functions with respect to the iterated Morley product of $\mu_{W}$ (\cref{lemma:integral}; \cref{lemma:hyper-int}). By (3), we may already conclude that $g_{*}(\kmpow{\mu_{W}})$ is a $\Sym(\mathbb{N})$-invariant measure on the space of $k$-uniform hypergraphs on $\mathbb{N}$. Therefore, it suffices to prove that for all $n \in \mathbb{N}$, the projections of the measures $g_{*}(\kmpow{\mu_{W}})$ and $\mathbb{G}(\mathbb{N}, W)$ to distributions on the space of $k$-uniform hypergraphs on $\{1,...,n\}$ are the same. This is done in \cref{theorem:2main,theorem:main}. 
    \item A specific instance of a general result of Ackerman, Freer, and Patel \cite{ackerman2016invariant}
    characterizes those ($k$-uniform hyper)graphs $H$ for which there is a $\Sym(\mathbb{N})$-invariant measure concentrated on the collection of (hyper)graphs on  $\mathbb{N}$ that are isomorphic to $H$ (\cite[Theorem 1]{ackerman2016invariant}). Our results give a characterization in terms of the existence of certain Keisler measures (\cref{cor:AFP1,cor:AFP2}).
    \item It is known that all ergodic $\Sym(\mathbb{N})$-invariant measures that concentrate on the space of countably infinite ($k$-uniform hyper)graphs can be obtained via a 
    process of ($k$-uniform hyper)graphon sampling. By \cref{theorem:2main,theorem:main}, we conclude that all of these measures can also be obtained via the process of generic sampling (\cref{cor:ergodic,cor:hyper_ergodic}).
\end{enumerate}

From the model-theoretic perspective, the measures constructed in this paper are interesting in their own right. We provide a large collection of measures outside of the NIP context that are inherently tame. Given a ($k$-uniform hyper)graphon $W$, the measure $\mu_{W}$ is Borel-definable, self-associative, and self-commutes with respect to the Morley product. However, none of the $\mu_{W}$ are generically stable (not even $\dfs$; see \cite[Corollary 4.10]{conant2020remarks}). This follows from the fact that 
countable universal homogeneous $k$-uniform hypergraphs do not admit any non-trivial generically stable measures \cite[Corollary 4.10]{conant2020remarks}. 

The paper is outlined as follows: In Section~\ref{sec:prelim}, we provide some necessary preliminaries. In Section~\ref{sec:zero}, we resolve an important loose end from \cite{gannon2024model} by observing a $0-1$ law of generic sampling with respect to excellent Keisler measures (Proposition \ref{prop:01}). In Section~\ref{sec:graphon}, we first consider the class of $\EmptySet$-invariant measures which were completely classified by Albert in \cite{albert1989random}. We prove that generic sampling with respect to this class of measures typically results in a copy of the Rado graph (\cref{prop:emptyset4}). However, there is also an interesting edge case where a different phenomenon occurs, namely, the construction of \emph{threshold graphs}. We then prove the exchangeable graph representation theorem following the sketch provided above, culminating with Theorem \ref{theorem:2main}. Finally, in Section~\ref{sec:hypergraphon}, we prove the exchangeable hypergraph representation theorem (\cref{theorem:main}), again following the recipe provided above. A key technical step in the proof of the main result of Section~\ref{sec:hypergraphon} is provided in Appendix~\ref{sec:nightmare-proof}.

In future work, we plan to generalize our representation theorem to characterize all invariant measures on the space of structures in a fixed finite relational language. We also plan to study the relationship between the choice of the structure $M$ over which we consider Keisler measures 
and the collection of $\Sym(\mathbb{N})$-invariant measures which can arise from iterated Morley products of those measures.

\section*{Acknowledgement} 

We would like to thank Forte Shinko for several important discussions and some helpful comments. He also initially suggested a connection between the two areas of research present within this work. We would also like to thank Vincenzo Mantova who, together with the third author, computed \cref{prop:emptyset3} while waiting for a table in Leeds. 

\section{Preliminaries}
\label{sec:prelim}

We assume some basic background in model theory. Almost all of our notation is standard. In Sections 4 and 5, some notation is provided at the beginning of the section for the reader's convenience. In particular, the beginning of Section 4 has some background and conventions regarding graphons while Section 5 likewise has similar comments regarding hypergraphons. 

We write $\Leb$ to denote Lebesgue measure restricted to the interval $[0,1]$. Throughout the entire article, we fix $\mathcal{U}$ a monster model of a first-order theory $T$ and $M$ a small elementary substructure of $\mathcal{U}$. The letters $x,y$ and $z$ represent singletons of variables while $\bar{x},\bar{y}$ and $\bar{z}$ represent finite tuples of variables, e.g., $\bar{x} = x_1,...,x_n$. 
The number of variables in a specific tuple will be clear from the context. We recall that if $A \subseteq \mathcal{U}$ then we let $\mathcal{L}_{\bar{x}}(A)$ denote the collection of formulas with free variables among $\bar{x}$ and parameters from $A$. For any positive integer $n$, we let $[n] = \{1, \ldots, n\}$. If $X$ is a set and $a$ is an element in $X$ we let $\delta_{a}$ denote the \textbf{Dirac measure} on $X$ concentrating on $a$.

\subsection{Keisler measures}

We refer the reader to \cite[Chapter~7]{Guide} for general background on Keisler measures, but recall many of the essential definitions here. A large number of these definitions originate from \cite{NIP1,NIP2,NIP3}. 

\begin{definition} Fix $A \subseteq \mathcal{U}$ and a tuple of variables $\bar x$. A \defn{Keisler measure} (in the variables $\bar x$ over $A$) is a finitely additive probability measure on $\mathcal{L}_{\bar x}(A)$ (modulo logical equivalence). Equivalently, it is a finitely additive probability measure on the $A$-definable subsets of $\mathcal{U}^{|\bar x|}$. We let $\mathfrak{M}_{\bar x}(A)$ denote the collection of Keisler measures on $\mathcal{L}_{\bar x}(A)$. If $\mu$ is in  $\mathfrak{M}_{\bar x}(A)$ and we want to emphasize the free variables, we will write $\mu$ as either $\mu(\bar x)$ or $\mu_{\bar x}$. When $\mu \in \mathfrak{M}_{\bar x} (A)$ and $\bar y$ is another ordered tuple of variables such that $|\bar y | = | \bar x |$, we let $\mu(\bar y)$ denote the corresponding measure in $\mathfrak{M}_{\bar y }(A)$ that results from the appropriate change of variables. 
\end{definition}

We remark that there is a one-to-one correspondence between Keisler measures on $\mathcal{L}_{\bar x}(A)$ and regular Borel probability measures on the associated Stone space, $S_{\bar x}(A)$. More explicitly, for any Keisler measure $\mu$ on $\mathcal{L}_{\bar x}(A)$, there exists a unique regular Borel probability measure $\tilde{\mu}$ on $S_{\bar x}(A)$ such that for any formula $\varphi(\bar x) \in \mathcal{L}_{\bar x}(A)$, we have that $\mu(\varphi(\bar x)) = \tilde{\mu}([\varphi(\bar x)])$ where $[\varphi(\bar x)] = \{ p \in S_{\bar x}(A)\st\varphi(\bar x) \in p\}$ (see \cite[Page~99]{Guide}). We will often identify $\mu$ and $\tilde{\mu}$, as well as $\varphi(\bar{x})$ and $[\varphi(\bar{x})]$, without comment. 

When $\varphi(\bar x, \bar y)$ is a partitioned formula with \emph{variable} variables $\bar x$ and \emph{parameter} variables $\bar y$, we let $\varphi^{*}(\bar y,\bar x)$ denote the same formula but with \emph{variable} variables $\bar y$ and \emph{parameter} variables $\bar x$.

\begin{definition}\label{cheat} Let $\mu \in \mathfrak{M}_{\bar x}(\mathcal{U})$ and $A$ be a small subset of $\mathcal{U}$. 
(Throughout the text, $A$ will often be a small elementary submodel $M$ of $\mathcal{U}$.)
\begin{enumerate}
    \item The measure $\mu$ is \defn{$A$-invariant} if for any $\bar a, \bar b \in \mathcal{U}^{| \bar y |}$ such that $\bar a \equiv_{A} \bar b$ and for any $\mathcal{L}$-formula $\varphi(\bar x,\bar y)$, we have that $\mu(\varphi(\bar x, \bar a)) = \mu(\varphi(\bar x, \bar b))$. We let $\mathfrak{M}_{\bar x}^{\inv}(\mathcal{U},A)$ be the collection of measures in $\mathfrak{M}_{\bar x}(\mathcal{U})$ which are $A$-invariant. Note that if $\mu$ is $A$-invariant and $B$ is a small subset of $\mathcal{U}$ with $A \subseteq B$, then $\mu$ is also $B$-invariant. 
    \item Let $\mu \in \mathfrak{M}^{\inv}_{\bar x}(\mathcal{U},A)$. Then for any $\Lc(A)$-formula $\varphi(\bar x, \bar y)$, we can define the map $F_{\mu,A}^{\varphi}\colon S_{\bar y}(A) \to [0,1]$ given by $F_{\mu,A}^{\varphi}(q) = \mu(\varphi(\bar x,\bar b))$ where $\bar b \in \mathcal{U}^{\bar{y}}$ is such that $\bar b \models q$. We remark that this map is well-defined since $\mu$ is $A$-invariant. 
    \item Let $\mu \in \mathfrak{M}^{\inv}_{\bar x}(\mathcal{U},A)$. We say that $\mu$ is \defn{Borel-definable (over $A$)} if for every $\mathcal{L}$-formula $\varphi(\bar x,\bar y)$, the map $F_{\mu,A}^{\varphi}$ is a Borel function. 
    \item Let $\mu \in \mathfrak{M}^{\inv}_{\bar x}(\mathcal{U},A)$. We say that $\mu$ is \defn{definable (over $A$)} if for every $\mathcal{L}$-formula $\varphi(\bar x,\bar y)$, the map $F_{\mu,A}^{\varphi}$ is a continuous function. 
    \item Let $\mu \in \mathfrak{M}^{\inv}_{\bar x}(\mathcal{U},A)$ and $\nu \in \mathfrak{M}_{\bar y}(\mathcal{U})$. Suppose that $\mu$ is Borel-definable over $A$. We define the \defn{Morley product} $\mu \otimes \nu$ on $\mathcal{L}_{\bar{x}\bar{y}}(\mathcal{U})$ as follows: For any formula $\varphi(\bar x,\bar y) \in \mathcal{L}_{\bar x \bar y}(\mathcal{U})$, we have that 
    \begin{equation*}
        (\mu \otimes \nu)(\varphi(\bar x,\bar y)) = \int_{S_{\bar y}(A')} F_{\mu,A'}^{\varphi}\, d\nu|_{A'},
    \end{equation*}
    where $A'$ is any small elementary submodel containing $A$ and all the parameters from $\varphi$. The measure $\nu|_{A'}$ is the regular Borel probability measure corresponding to the restriction of $\nu$ to $\mathcal{L}_{\bar y}(A')$. We remark that this product is well-defined and $\mu \otimes \nu \in \mathfrak{M}_{\bar{x}\bar{y}}(\mathcal{U})$ (see \cite[Page 106]{Guide}).  In practice, we often drop the $A'$ from the notation when there is no possibility of confusion, e.g., we write $F_{\mu}^{\varphi}$ instead of $F_{\mu,A'}^{\varphi}$ and $\nu$ instead of $\nu|_{A'}$.
\end{enumerate}
\end{definition}

\begin{remark} The Morley product is an extension of the standard product measure on $\mathcal{L}_{\bar x}(\mathcal{U}) \times \mathcal{L}_{\bar y}(\mathcal{U})$. More explicitly,  
if $A$ is a small subset of $\mathcal{U}$, if $\mu \in \mathfrak{M}_{\bar x}^{\inv}(\mathcal{U},A)$ is Borel-definable, and if $\nu \in \mathfrak{M}_{\bar y}(\mathcal{U})$, then for any $\varphi(\bar x) \in \mathcal{L}_{\bar x}(\mathcal{U})$ and $\psi(\bar y) \in \mathcal{L}_{\bar y}(\mathcal{U})$, we have that 
    \begin{equation*}
        (\mu \otimes \nu)(\varphi(\bar x) \wedge \psi(\bar y)) = \mu(\varphi(\bar x)) \cdot \nu(\psi(\bar y)). 
    \end{equation*}
This follows immediately from the definition of the Morley product.
\end{remark}

We recall that the Morley product is often not self-commutative, i.e., it may be the case that $\mu(\bar x)\otimes \mu(\bar y) \neq \mu(\bar y) \otimes \mu(\bar x)$. We also remind the reader that while the Morley product is well-behaved in the NIP setting, some pathologies arise in the general context. For example, the Morley product of Borel-definable measures may fail to be Borel-definable \cite[Proposition 3.9]{CGH} and the Morley product can fail to be associative on triples of Borel-definable Keisler measures \cite[Proposition 3.10]{CGH}. In this paper, we are focused on structures with the independence property and so we need to be a little careful. The following definitions of \emph{adequacy} and \emph{excellence} are from \cite{gannon2024model}. The motivation behind these definitions was to define classes of measures that interact nicely with the Morley product, but are not constrained to the NIP context.

\begin{definition}\label{adex} Let $\mu \in \mathfrak{M}^{\inv}_{\bar x}(\mathcal{U},M)$ and let $(\bar{x}_i)_{i \geq 1}$ be a sequence of distinct tuples of variables such that $|\bar{x}_i| = |\bar{x}|$. We say that $\mu$ is \defn{$M$-\weak} (or \textbf{adequate over $M$}) when the following hold.
\begin{enumerate}
    \item $\mu$ is Borel-definable over $M$. 
    \item Any finite iteration of the Morley product of $\mu$ is Borel-definable over $M$, i.e.,\ for any $n \geq 1$, the measure $\mu^{(n+1)}(\bar{x}_1,...,\bar{x}_{n+1}) = \mu(\bar{x}_{n+1}) \otimes \mu^{(n)}(\bar{x}_1,...,\bar{x}_{n})$ is Borel-definable over $M$. 
    \item $\mu$ is self-associative, i.e.,\ the measure $\bigotimes_{i=1}^{n} \mu(\bar{x}_i)$ does not depend on the placement of parentheses. 
\end{enumerate}
We say that $\mu$ is \defn{$M$-\strong} (or \textbf{excellent over $M$}) if $\mu$ is $M$-\weak\ and $\mu$ is self-commuting, i.e.,\  $\mu(\bar{x}) \otimes \mu( \bar y) = \mu( \bar y) \otimes \mu( \bar x)$.
\end{definition}

Given an $M$-adequate Keisler measure $\mu$, we can associate to it a Keisler measure in infinitely many variables. This measure corresponds to \emph{generic sampling with respect to $\mu$} and is the central object of study in \cite{gannon2024model}. We let $\mathbf{x} = (x_i)_{i \geq 1}^{\omega}$. 

\begin{definition} Let $\mu \in \mathfrak{M}^{\inv}_{x}(\mathcal{U},M)$ and suppose that $\mu$ is $M$-adequate. We define the Keisler measure $\kmpow{\mu}$ on $\mathcal{L}_{\mathbf{x}}(\mathcal{U})$ as follows: 
\begin{enumerate}
    \item $\mu^{(1)}(x_1)= \mu(x_1)$. 
    \item $\mu^{(n+1)}(x_1,...,x_{n+1}) = \mu(x_{n+1}) \otimes \mu^{(n)}(x_1,...,x_{n})$. 
    \item $\kmpow{\mu} = \bigcup_{n=1}^{\omega} \mu^{(n)}(x_1,...,x_{n})$.
\end{enumerate} 
We note that $\mathbb{P}_{\mu}$ is a Keisler measure on $\mathcal{L}_{\mathbf{x}}(\mathcal{U})$ because it is defined as the union of partial functions (on increasingly larger domains).

Formally, the probability space we are interested in is $(S_{\mathbf{x}}(\mathcal{U}), \mathcal{B} ,\widehat{\kmpow{\mu}})$ where $\mathcal{B}$ is the Borel $\sigma$-algebra of $S_{\mathbf{x}}(\mathcal{U})$ and $\widehat{\kmpow{\mu}}$ is the unique regular Borel probability measure on $S_{\mathbf{x}}(\mathcal{U})$ such that for every $\mathcal{L}_{\mathbf{x}}(\mathcal{U})$-formula $\varphi(x_1,...,x_n)$, we have that 
\begin{equation*}
    \widehat{\kmpow\mu}([\varphi(x_1,...,x_n)]) = \kmpow{\mu}(\varphi(x_1,...,x_n)). 
\end{equation*}
We remark that $\widehat{\kmpow{\mu}}$ exists and is well-defined (e.g., \cite[416Q Proposition (b)]{fremlin2000measure}). As a convention, we will always identify $\widehat{\kmpow{\mu}}$ with $\kmpow{\mu}$.
\end{definition}

\subsection{Space of labeled \texorpdfstring{$\Lc$}{L}-structures}

In this subsection, we assume that $\mathcal{L}$ is a finite relational language. For $R \in \mathcal{L}$, we let $\ar(R)$ denote the arity of $R$. As usual, $\mathcal{U}$ is a monster model of a fixed $\mathcal{L}$-theory $T$. 

\begin{convention} Let $R \in \mathcal{L}$. We write $R^{1}(-)$ to denote $R(-)$ and we write $R^{-1}$ to denote $\neg R(-)$. 
\end{convention}

\begin{definition} We let $\str_{\mathcal{L}}$ be the \defn{space of labeled countable $\mathcal{L}$-structures}. This is the space of $\mathcal{L}$-structures with underlying universe $\mathbb{N}$. For any $\mathcal{L}$-formula $\varphi(x_1,...,x_n)$ and natural numbers $i_1,...,i_n$, we let 
\begin{equation*}
    \llbracket \varphi(i_1,...,i_n)\rrbracket := \{M \in \str_{\mathcal{L}} \st M \models \varphi(i_1,...,i_n)\}. 
\end{equation*}
We recall that $\str_{\mathcal{L}}$ is a compact Hausdorff space with the clopen subbasis 
\begin{equation*}
    \{\llbracket R^{\epsilon}(i_{1},...,i_{\ar(R)}) \rrbracket: \epsilon \in \{1,-1\}, R \in \mathcal{L}, \text{ and } i_{1},...,i_{\ar(R)} \in \mathbb{N}\}. 
\end{equation*}
\end{definition}

Given a type $p \in S_{\mathbf{x}}(\mathcal{U})$, one can associate to $p$ an $\mathcal{L}$-structure $p_{\mathcal{L}}$ as follows: 

\begin{definition} Fix $p \in S_{\mathbf{x}}(\mathcal{U})$. Consider $\mathcal{U}'$ where $\mathcal{U} \prec \mathcal{U}'$. Given a tuple $\mathbf{a} = (a_1,a_2,a_3,...) \in (\mathcal{U}')^{\omega}$ such that $\mathbf{a} \models p$, one can consider the $\mathcal{L}$-structure $\mathbf{a}_{\mathcal{L}}$ which is simply the induced structure from $\mathcal{U}'$ onto the set $\{a_i \st i \geq 1\}$. We remark that if $\mathbf{b} \in (\mathcal{U}')^{\omega}$ and $\mathbf{a}, \mathbf{b} \models p$ then $\mathbf{a}_{\mathcal{L}} \cong \mathbf{b}_{\mathcal{L}}$. In particular, the map $f(a_i) = b_i$ is an isomorphism between the induced structures.  Hence we let $p_{\mathcal{L}}$ be the isomorphism type of any/all the realizations of $p$.
\end{definition}
To simplify some of the arguments, we consider the following smaller (yet more natural) space of types given by 
\begin{equation*}
    S_{\mathbf{x}}^{+}(\mathcal{U}) := \{p \in S_{\mathbf{x}}(\mathcal{U}) \st (\forall i \neq j)\  (x_i \neq x_j) \in p \text{~and~} (\forall a \in \mathcal{U})\ (x =a) \not \in p\}. 
\end{equation*}

\begin{definition}
We say that a Keisler measures $\mu$ in $\mathfrak{M}_{x}(\mathcal{U})$ \defn{does not concentrate on points}
if for every $a \in \mathcal{U}$,
$\mu(x = a) = 0$. Note that $\mu$ is allowed to assign positive measure on particular non-realized types.
\end{definition}

We remark that if a Keisler measure $\mu \in \mathfrak{M}_{x}^{\inv}(\mathcal{U},M)$ is $M$-adequate and $\mu$ does not concentrate on points then $\supp(\kmpow{\mu}) \subseteq S_{\mathbf{x}}^{+}(\mathcal{U})$ (see \cite[Proposition 3.7]{gannon2024model}). This assumption is akin to assuming that our measure is \emph{nowhere degenerate}. In practice, it allows us to assume that we never sample the same point twice. All of the measures that interest us 
have this property.   

\begin{definition} 
Define the map 
$g\colon S_{\mathbf{x}}^{+}(\mathcal{U}) \to \str_{\mathcal{L}}$ to be given by $g(p) = M_{p}$ where for $R \in \mathcal{L}$, we have $R^{M_{p}} := \{(i_{1},...,i_{\ar(R)}) \st R(x_{i_{1}},...,x_{i_{\ar(R)}}) \in p\}$. 
Observe that $g$ is continuous, as the preimage of a clopen set in $\str_{\mathcal{L}}$ is a set of types extending a partial quantifier-free type. 
Moreover, if $\mu$ in $\mathfrak{M}_{x}^{\inv}(\mathcal{U},M)$ is an $M$-adequate measure that does not concentrate on points, then we can consider the pushforward of $\kmpow{\mu}$ along $g$  and construct a measure on $\str_{\mathcal{L}}$ (strictly speaking, this is the pushforward of the restriction of $\mathbb{P}_{\mu}$ to $S_{\mathbf{x}}^{+}(\mathcal{U})$). This pushforward measure is denoted as $g_{*}(\kmpow{\mu})$. 
\end{definition}

\begin{definition} Let $N$ be a countably infinite $\mathcal{L}$-structure. We define
\begin{equation*}
    \mathbb{A}_{N} := \{M \in \str_{\mathcal{L}} \st M \cong N\}.
\end{equation*}
A classic result of Scott shows that $\mathbb{A}_{N}$ is Borel \cite[II.16.6]{kechris2012classical}. We say that a Borel probability measure $\nu$ on $\str_{\mathcal{L}}$ \defn{concentrates on $N$} if $\nu(\mathbb{A}_{N}) = 1$.
Further, let 
\begin{equation*}
\mathbb{B}_{N} := \{p \in S_{\mathbf{x}}^{+}(\mathcal{U}) \st p_{\mathcal{L}} \cong N\}.
\end{equation*}
Notice that $g^{-1}(\mathbb{A}_{N}) = \mathbb{B}_{N}$.
Since $g$ is continuous, $\mathbb{B}_{N}$ is a Borel subset of $S_{\mathbf{x}}^{+}(\mathcal{U})$. We say that  $\kmpow{\mu}$ \defn{concentrates on $N$} if $\kmpow{\mu}(\mathbb{B}_{N}) = 1$. 
\end{definition}

\begin{definition} Let $\nu$ be a Borel probability measure on $\str_{\mathcal{L}}$. We say that $\nu$ is \defn{$\Sym(\mathbb{N})$-invariant} if $\nu$ is invariant under the natural action of $\Sym(\mathbb{N})$ on $\str_{\mathcal{L}}$, i.e., if for every Borel subset $B$ of $\str_{\mathcal{L}}$ and every $\sigma\in \Sym(\mathbb{N})$, we have that $\nu(B) = \nu(\sigma(B))$.
\end{definition}

The following fact is \cite[Theorem 4.16]{gannon2024model}.

\begin{fact}\label{fact:excellent} Suppose that $\mu \in \mathfrak{M}_{x}^{\inv}(\mathcal{U},M)$ and $\mu$ does not concentrate on points. If $\mu$ is $M$-excellent, then $g_*(\kmpow{\mu})$ is $\Sym(\mathbb{N})$-invariant.
\end{fact}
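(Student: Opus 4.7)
The plan is to exploit the equivariance of $g$ with respect to the natural $\Sym(\mathbb{N})$-actions on its source and target, and then reduce the desired invariance of $g_{*}(\kmpow{\mu})$ to the $\Sym(\mathbb{N})$-invariance of $\kmpow{\mu}$ itself as a Borel measure on $S_{\mathbf{x}}(\mathcal{U})$.

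First, I would define the action of $\sigma \in \Sym(\mathbb{N})$ on $S_{\mathbf{x}}^{+}(\mathcal{U})$ by $\varphi(x_{i_1},\ldots,x_{i_n}) \in \sigma \cdot p$ iff $\varphi(x_{\sigma^{-1}(i_1)},\ldots,x_{\sigma^{-1}(i_n)}) \in p$, and check by unwinding the definitions that $g(\sigma \cdot p) = \sigma \cdot g(p)$ for every $p \in S_{\mathbf{x}}^{+}(\mathcal{U})$. Hence $g_{*}(\sigma_{*}\kmpow{\mu}) = \sigma_{*} g_{*}(\kmpow{\mu})$, and it suffices to show $\sigma_{*}\kmpow{\mu} = \kmpow{\mu}$. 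Because the Borel $\sigma$-algebra on $S_{\mathbf{x}}(\mathcal{U})$ is generated by the cylinders $[\varphi(x_{i_1},\ldots,x_{i_n})]$, and $\kmpow{\mu}$ is determined on these by $\mu^{(N)}(x_1,\ldots,x_N)$ for sufficiently large $N$, this reduces to the purely algebraic claim that each iterated Morley product $\mu^{(n)}(x_1,\ldots,x_n)$ is invariant under permutations of $\{x_1,\ldots,x_n\}$ as a Keisler measure.

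To establish the permutation invariance of $\mu^{(n)}$, I would reduce to adjacent transpositions: any permutation of $\{1,\ldots,n\}$ factors as a product of transpositions of the form $(i,i{+}1)$. Given such a transposition, self-associativity (condition (3) of Definition \ref{adex}) allows me to parenthesize $\mu^{(n)}$ as
\begin{equation*}
\Bigl(\bigotimes_{j<i}\mu(x_j)\Bigr) \otimes \bigl(\mu(x_i) \otimes \mu(x_{i+1})\bigr) \otimes \Bigl(\bigotimes_{j>i+1}\mu(x_j)\Bigr).
\end{equation*}
Self-commutativity of $\mu$, which is precisely the additional content of excellence over adequacy, replaces the middle factor by $\mu(x_{i+1}) \otimes \mu(x_i)$. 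Composing such adjacent swaps then delivers invariance under any permutation of the $n$ variables, completing the reduction.

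The main obstacle I anticipate is justifying that one can substitute equal measures into a fixed slot of an iterated Morley product and still obtain equal products: a kind of ``functoriality'' of $\otimes$ in each argument. This follows from the integral definition in Definition \ref{cheat}(5), but it requires Borel-definability at every intermediate stage of the iteration, which is exactly what condition (2) of adequacy guarantees; without this, one could not even form the relevant products unambiguously. Once this functoriality is verified, the remainder is bookkeeping with finite index sets together with the standard extension of equality from a generating $\pi$-system of cylinder sets to the full Borel $\sigma$-algebra via uniqueness of the regular Borel representation $\widehat{\kmpow{\mu}}$.
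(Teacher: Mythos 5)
Your proof is correct in essentials, and since the paper merely cites this as \cite[Theorem 4.16]{gannon2024model} without reproducing the argument, there is no in-paper proof to compare against; however, the remark following \cref{fact:dissociated} carries out exactly the kind of ``associativity to reparenthesize, self-commutativity to transpose adjacent factors'' computation you describe, so your route is the intended one.

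Two small points worth noting. First, the paper's convention is $\mu^{(n+1)}(x_1,\ldots,x_{n+1}) = \mu(x_{n+1}) \otimes \mu^{(n)}(x_1,\ldots,x_n)$, so the outermost factor is the \emph{largest}-index variable; your display writes the factors in the opposite order, which does not affect the argument but should be reconciled with the definitions in use before writing the adjacent-transposition step in full. Second, after replacing $\mu(x_{i+1}) \otimes \mu(x_i)$ by $\mu(x_i) \otimes \mu(x_{i+1})$ inside the bracketed product, you still need to identify the resulting three-fold product with $\tau(\mu^{(n)})$ (the measure obtained from $\mu^{(n)}$ by relabeling $x_i \leftrightarrow x_{i+1}$); this is the assertion that the iterated Morley product is natural in the variable names, which holds essentially by inspection of \cref{cheat}(5) but should be stated, since the transposition-invariance you want is a statement about relabeling formulas and not merely about reordering factors. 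Beyond that, your ``functoriality of $\otimes$ in each slot'' claim is correct and, as you note, rests exactly on condition (2) of adequacy, which guarantees Borel-definability of every partial product so that all three-fold products you write down are actually defined. Finally, you implicitly use the ``does not concentrate on points'' hypothesis when reducing $\Sym(\Nats)$-invariance of $g_*(\kmpow{\mu})$ to that of $\kmpow{\mu}$: the pushforward is by convention the pushforward of the restriction to $S_{\mathbf{x}}^{+}(\mathcal{U})$, and that hypothesis is what makes $\kmpow{\mu}(S_{\mathbf{x}}^{+}(\mathcal{U}))=1$, so that equivariance of $g$ on $S_{\mathbf{x}}^{+}(\mathcal{U})$ suffices.
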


\section{0-1 law for excellent measures}
\label{sec:zero}

In this short section, we tie up a loose end from \cite{gannon2024model} by proving a $0$-$1$ law for generic sampling with respect to excellent measures. The heavy computational aspects of the proof have already been written elsewhere and so what remains is to understand  the relationship between generic sampling and $\Sym(\mathbb{N})$-invariant measures on $\str_{\mathcal{L}}$. These are inherently linked through the transfer map $g\colon S_{\mathbf{x}}^{+}(\mathcal{U}) \to \str_{\mathcal{L}}$ and the fact that the pushforward of an excellent measure along $g$ is $\Sym(\mathbb{N})$-invariant. 

\begin{definition} Let $\nu$ be a Borel probability measure on $\str_{\mathcal{L}}$. We say that $\nu$ is \defn{dissociated} if for every sequence of distinct positive integers $i_1,...,i_n,j_1,...,j_m$ and any  $\mathcal{L}$-formulas $\theta(x_1,...,x_n), \psi(x_1,...,x_m)$, we have that 
\begin{equation*}
    \nu(\llbracket\theta(i_1,...,i_n) \wedge \psi(j_1,...,j_m) \rrbracket )= \nu(\llbracket\theta(i_1,...,i_n)\rrbracket) \cdot \nu(\llbracket\psi(j_1,...,j_m)\rrbracket). 
\end{equation*}
\end{definition}

The next fact is a simplified version of \cite[Lemma 3.9]{gannon2024model}. 

\begin{fact}\label{fact:dissociated} Suppose that $\mu \in \mathfrak{M}_{x}^{\inv}(\mathcal{U},M)$ and $\mu$ is $M$-adequate. Then for any sequence of distinct positive integers $i_1,...,i_n,j_1,...j_m$ and for any $\mathcal{L}$-formulas $\theta(x_{i_1},...,x_{i_n})$ and $\psi(x_{j_1},...,x_{j_m})$, we have that 
\begin{equation*}
    \kmpow{\mu}(\theta(x_{i_1},...,x_{i_n}) \wedge \psi(x_{j_1},...,x_{j_m})) = \kmpow{\mu}(\theta(x_{i_1},...,x_{i_n})) \cdot \kmpow{\mu}( \psi(x_{j_1},...,x_{j_m})). 
\end{equation*}
\end{fact}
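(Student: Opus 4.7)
I would prove the statement by induction on $k \defas \max\{i_1,\ldots,i_n,j_1,\ldots,j_m\}$, unfolding the iterated Morley product at the top variable. The base case (where one of $n,m$ is zero, or $k=1$) is immediate. For the inductive step we may assume by symmetry that $k = i_n$, so that both probabilities in question are evaluated inside $\mu^{(k)}(x_1,\ldots,x_k) = \mu(x_k) \otimes \mu^{(k-1)}(x_1,\ldots,x_{k-1})$. Here adequacy clause~(2) is used to guarantee that this Morley product is well-defined and Borel-definable; without it the recursion cannot even be written down.

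Unfolding the definition, the left-hand side equals
\[
\int_{S_{x_1,\ldots,x_{k-1}}(M)} F_{\mu,M}^{\varphi}(q) \, d\mu^{(k-1)}(q),
\]
where $\varphi(x_k;\bar{y})$ is the partitioned form of $\theta \wedge \psi$ with $x_k$ as the variable variable. Since $\psi$ does not involve $x_k$, for any $\bar{a} \models q$ the integrand factors as $H(q) \cdot \mathbf{1}_{[\psi]}(q)$, where $H(q) \defas \mu_x\bigl(\theta(a_{i_1},\ldots,a_{i_{n-1}},x)\bigr)$ depends only on the restriction of $q$ to the $i$-variables, and is Borel by Borel-definability of $\mu$.

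The crux is then to show that $\int H \cdot \mathbf{1}_{[\psi]} \, d\mu^{(k-1)} = \bigl(\int H \, d\mu^{(k-1)}\bigr) \cdot \mu^{(k-1)}(\psi)$. I would handle this by a monotone-class (or $\pi$-$\lambda$) argument: the class of bounded Borel functions on $S_{x_{i_1},\ldots,x_{i_{n-1}}}(M)$ for which this factorization (against the fixed $\mathbf{1}_{[\psi]}$) holds contains indicators of clopen sets $[\theta_0(x_{i_1},\ldots,x_{i_{n-1}})]$ by the inductive hypothesis, is closed under finite linear combinations, and is closed under bounded monotone limits by dominated convergence; hence it includes every bounded Borel function. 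Rebundling the two resulting factors as $\kmpow{\mu}(\theta) = \int H \, d\mu^{(k-1)}$ (via one more pass through the Morley-product definition) and $\kmpow{\mu}(\psi) = \mu^{(k-1)}(\psi)$ closes the induction.

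The main obstacle, and the whole reason Borel-definability is built into the definition of adequacy, is precisely this passage from formulas to Borel functions. The raw inductive hypothesis yields dissociation only between indicators of clopen sets, while the Morley-product unfolding naturally couples an indicator with the Borel function $H$. Borel-definability is exactly what makes $H$ measurable, and the monotone-class upgrade is exactly what bridges this mismatch; once those two ingredients are in place the argument is a bookkeeping exercise on iterated Morley products.
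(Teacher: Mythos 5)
Your approach is genuinely different from the paper's. For this Fact, the paper defers the full adequate case to an external reference and supplies its own short proof only in the \textbf{excellent} case: there, commutativity is used to permute the variables so that $\theta$'s variables become $x_1,\ldots,x_n$ and $\psi$'s become $x_{n+1},\ldots,x_{n+m}$, after which a single application of the separated-amalgam property of the Morley product gives the factorization. Your induction on the maximum index avoids commutativity entirely, handling the interleaving of the two index sets by peeling off the top variable and then upgrading from formulas to Borel functions via a monotone-class argument. This is a more elementary route that establishes the full adequate statement rather than only the excellent one, and it is a legitimate alternative to the cited reference.

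There is, however, a gap in the monotone-class step. The fiber function $H(q) = F_{\mu,M}^{\theta'}(q)$ is Borel with respect to the \emph{full} Borel $\sigma$-algebra of $S_{x_{i_1},\ldots,x_{i_{n-1}}}(M)$, i.e., the one generated by clopen sets $[\theta_0]$ with $\theta_0$ an $\mathcal{L}(M)$-formula (parameters from $M$ allowed). This is because Borel-definability is definability over $M$: the measure $\mu$ is $M$-invariant but in general not $\emptyset$-invariant, so $H$ can genuinely distinguish two types in $S_{\bar{x}_I}(M)$ that have the same restriction to $\emptyset$, and is therefore not measurable with respect to the smaller $\sigma$-algebra generated by parameter-free $\mathcal{L}$-formula clopens. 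Your inductive hypothesis, as stated and as matching the Fact, gives dissociation only for parameter-free $\mathcal{L}$-formulas; the monotone-class theorem then only reaches functions measurable with respect to the $\mathcal{L}$-clopen $\sigma$-algebra, which need not include $H$. The fix is to strengthen the statement being proved by induction to all $\mathcal{L}(M)$-formulas $\theta_0$ and $\psi$. This stronger version is still true under adequacy (the separated-amalgam identity in $\mathrm{Definition}$~\ref{cheat}(5) already allows arbitrary parameters), the base case still goes through, and with that change your argument closes.
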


\begin{remark} While Fact \ref{fact:dissociated} is true in the context of adequate measures, we only need to apply the result to excellent measures. This restricted result is quite straightforward to prove and easier than the reference provided, so we supply a quick proof. Indeed, by using associativity, commutativity, and the fact that the Morley product is a separated amalgam, one has the following computation: 
\begin{align*}
    \kmpow{\mu}(\theta(x_{i_1},...,x_{i_n}) \wedge \psi(x_{j_1},...,x_{j_m}))  
    &\hspace*{-1.2pt}\overset{(a)}{=} \kmpow{\mu}(\theta(x_1,...,x_{n}) \wedge \psi(x_{n+1},...,x_{n + m})) \\ 
    &=\left(\mu^{(m)}_{(x_{n+1},...,x_{n+m})} \otimes \mu^{(n)}_{(x_{1},...,x_{n})}\right)(\theta(x_1,...,x_{n}) \wedge \psi(x_{n+1},...,x_{n+m})) \\ 
    &= \mu^{(m)}_{(x_{n+1},...,x_{n+m})} (\psi(x_{n+1},...,x_{n+m})) \cdot \mu^{(n)}_{(x_{1},...,x_{n})}(\theta(x_{1},...,x_{n})) \\ 
    &= \kmpow{\mu}(\psi(x_{n+1},...,x_{n+m})) \cdot \kmpow{\mu}(\theta(x_1,...,x_{n}))\\
    &\hspace*{-1.2pt}\overset{(b)}{=} \kmpow{\mu}(\psi(x_{j_1},...,x_{j_m})) \cdot \kmpow{\mu}(\theta(x_{i_1},...,x_{i_n})). 
\end{align*}
Equations $(a)$ and $(b)$ require self-commutativity, i.e., excellence.  
\end{remark}

\begin{definition} Suppose that $\nu$ is a Borel probability measure on $\str_{\mathcal{L}}$. We say that a Borel subset $B$ of $\str_{\mathcal{L}}$ is \defn{$\nu$-a.s.\ $\Sym(\mathbb{N})$-invariant} if for every $\sigma \in \Sym(\mathbb{N})$, we have that $\nu(B\triangle \sigma^{-1}(B)) = 0$. We say that a $\Sym(\mathbb{N})$-invariant probability measure $\nu$ on $\str_{\mathcal{L}}$ is \defn{ergodic} if  $\nu(B) \in \{0,1\}$ for every $\nu$-a.s. $\Sym(\mathbb{N})$-invariant Borel subset $B$ of $\str_{\mathcal{L}}$.
\end{definition}

A proof of the following fact can be found in \cite[Corollary 2.46]{ackerman2015representations} (see also \cite[Lemma 7.35]{kallenberg2005probabilistic}).

\begin{fact}\label{fact:ergodic} Suppose that $\nu$ is a $\Sym(\mathbb{N})$-invariant Borel probability measure on $\str_{\mathcal{L}}$. Then the following are equivalent: 
\begin{enumerate}
    \item $\nu$ is dissociated. 
    \item $\nu$ is ergodic. 
\end{enumerate}
\end{fact}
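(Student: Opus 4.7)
The plan is to prove the two directions separately, with approximation of Borel subsets of $\str_{\mathcal{L}}$ by cylinder sets (those depending on only finitely many indices) as the common technical input. A permutation shift then realizes any finite ``dependency support'' as disjoint from itself, which is where dissociation inputs the factorization needed for the zero-one conclusion.

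For $(1) \Rightarrow (2)$, let $B$ be $\nu$-a.s.\ $\Sym(\mathbb{N})$-invariant. Since the subbasic clopens of $\str_{\mathcal{L}}$ each mention only finitely many indices, every Borel set is approximable in $\nu$-measure by a set $B_n$ depending only on indices in $[n]$. Fix $\varepsilon > 0$ and such a $B_n$ with $\nu(B \triangle B_n) < \varepsilon$, and choose $\sigma \in \Sym(\mathbb{N})$ with $\sigma([n]) \cap [n] = \emptyset$. Then $\sigma \cdot B_n$ is supported on indices disjoint from those of $B_n$, so dissociation applied to the defining formulas gives $\nu(B_n \cap \sigma \cdot B_n) = \nu(B_n)^2$. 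Meanwhile, a.s.\ invariance of $B$ gives $\nu(\sigma \cdot B \triangle B) = 0$, so both $B_n$ and $\sigma \cdot B_n$ are within $\varepsilon$ of $B$ in symmetric difference. Hence $\nu(B)^2$ and $\nu(B)$ agree up to $O(\varepsilon)$; letting $\varepsilon \to 0$ forces $\nu(B) \in \{0,1\}$.

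For $(2) \Rightarrow (1)$, fix $\theta$ and $\psi$ with disjoint index sets $I$ and $J$, and let $A, B$ denote the corresponding cylinders in $\str_{\mathcal{L}}$. I would apply a mean-ergodic averaging argument: pick finite subgroups $G_k \leq \Sym(\mathbb{N})$ each fixing $I$ pointwise and moving $J$ through an expanding family of disjoint position blocks, and set $f_k = \frac{1}{|G_k|}\sum_{\sigma \in G_k} \mathbf{1}_{\sigma \cdot B}$. Each $f_k$ has integral $\nu(B)$ by $\Sym(\mathbb{N})$-invariance, and any weak $L^2$-limit of the $f_k$ must lie in the $\Sym(\mathbb{N})$-invariant subspace, which by ergodicity consists only of $\nu$-a.e.\ constants. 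Therefore $f_k \to \nu(B)$ in $L^2(\nu)$, and integrating against $\mathbf{1}_A$ yields $\nu(A \cap B) = \nu(A)\nu(B)$.

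The main obstacle is the second direction, because $\Sym(\mathbb{N})$ with its pointwise-convergence topology is not locally compact, so one cannot invoke standard group-ergodic machinery off the shelf. The cleanest remedy is to pass through the directed system of finite symmetric groups $\Sym([N])$ acting on finite-dimensional marginals, where von Neumann's mean ergodic theorem applies, and then use cylinder approximation to lift back to $\str_{\mathcal{L}}$; this is essentially the Hewitt--Savage zero-one law in the exchangeable-structure setting, and is the content of the referenced results.
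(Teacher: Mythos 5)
The paper does not supply its own proof of this statement; it is stated as a Fact and referenced to Ackerman--Freer--Patel and to Kallenberg. So there is nothing in the paper to compare you against, and your writeup is a genuine proof attempt. Your direction $(1)\Rightarrow(2)$ is correct and is the standard Hewitt--Savage argument: approximate the a.s.\ invariant set $B$ in $\nu$-measure by a clopen cylinder $B_n$ determined by a quantifier-free formula on $[n]$ (finitely many, since $\mathcal{L}$ is finite relational), shift by $\sigma$ with $\sigma([n])\cap[n]=\emptyset$, use dissociation and $\Sym(\mathbb{N})$-invariance to get $\nu(B_n\cap\sigma B_n)=\nu(B_n)^2$, and let $\varepsilon\to 0$ to force $\nu(B)^2=\nu(B)$.

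Direction $(2)\Rightarrow(1)$ has a genuine gap at the assertion that ``any weak $L^2$-limit of the $f_k$ must lie in the $\Sym(\mathbb{N})$-invariant subspace.'' Because you chose the $G_k$ to \emph{fix $I$ pointwise}, the limit is only forced into the $\Sym(\mathbb{N}\setminus I)$-invariant subspace, and under a $\Sym(\mathbb{N})$-ergodic $\nu$ this subspace is strictly larger than the constants. Concretely, take the graphon $W(s,t)=st$: the resulting $\mathbb{G}(\mathbb{N},W)$ is dissociated, hence ergodic by $(1)\Rightarrow(2)$, yet the limiting edge density of vertex $1$ is a nonconstant $\Sym(\mathbb{N}\setminus\{1\})$-invariant function. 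Showing that the $\Sym(\mathbb{N}\setminus I)$-conditional expectation of $\mathbf{1}_B$ is nonetheless constant is essentially the dissociation statement you are trying to prove, so the argument as written is circular. Your closing remark already gestures at the repair --- average over \emph{all} of $\Sym([N])$ --- but then the exact identity $\int f_k\,\mathbf{1}_A\,d\nu = \nu(A\cap B)$ you relied on fails and must be replaced by an asymptotic one. The clean version: let $T_N=\frac{1}{N!}\sum_{\sigma\in\Sym([N])}U_\sigma$. This is the projection onto the $\Sym([N])$-fixed subspace; these subspaces decrease to the $\Sym_{\mathrm{fin}}(\mathbb{N})$-fixed subspace, which equals the full $\Sym(\mathbb{N})$-invariant subspace (strong-operator continuity of the action and density of $\Sym_{\mathrm{fin}}$), hence the constants under ergodicity, so $T_N\mathbf{1}_B\to\nu(B)$ strongly. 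Meanwhile $\langle\mathbf{1}_A,T_N\mathbf{1}_B\rangle=\frac{1}{N!}\sum_\sigma\nu(A\cap\sigma B)$, and whenever $\sigma(J)\cap I=\emptyset$ one can conjugate by a permutation fixing $I$ and sending $\sigma(J)$ to $J$ to get $\nu(A\cap\sigma B)=\nu(A\cap B)$; the fraction of exceptional $\sigma$ is $O(|I||J|/N)\to 0$. Passing to the limit gives $\nu(A\cap B)=\nu(A)\nu(B)$, closing the gap.
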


\begin{proposition}[0-1 Law]\label{prop:01} Let $\mu \in \mathfrak{M}^{\inv}_{x}(\mathcal{U},M)$, and suppose that $\mu$ does not concentrate on points and $\mu$ is $M$-excellent. Then  $\kmpow{\mu}(\mathbb{B}_{N}) \in \{0,1\}$ for any $\mathcal{L}$-structure $N$.
\end{proposition}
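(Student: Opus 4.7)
The plan is to deduce the $0$-$1$ law by transferring ergodicity from the measure $g_{*}(\kmpow{\mu})$ on $\str_{\mathcal{L}}$ back to $\kmpow{\mu}$. The three prior facts do essentially all the work: Fact \ref{fact:excellent} gives $\Sym(\mathbb{N})$-invariance of the pushforward, Fact \ref{fact:dissociated} will yield dissociation, and Fact \ref{fact:ergodic} converts dissociation into ergodicity, after which $\mathbb{A}_{N}$ being $\Sym(\mathbb{N})$-invariant finishes the argument.

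First I would set $\nu := g_{*}(\kmpow{\mu})$, which is a Borel probability measure on $\str_{\mathcal{L}}$. By the hypothesis that $\mu$ does not concentrate on points and $\mu$ is $M$-excellent (hence $M$-adequate), the support of $\kmpow{\mu}$ lies in $S_{\mathbf{x}}^{+}(\mathcal{U})$, so the pushforward $\nu$ is well-defined. By Fact \ref{fact:excellent}, $\nu$ is $\Sym(\mathbb{N})$-invariant.

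Next I would verify that $\nu$ is dissociated. It suffices to check the defining identity on the clopen subbasis of $\str_{\mathcal{L}}$, i.e.\ on atomic and negated atomic formulas, and then extend by the standard monotone class / $\pi$-$\lambda$ argument (or directly, since dissociation as stated already uses arbitrary $\mathcal{L}$-formulas and quantifier-free pullbacks of clopen basic sets correspond exactly to clopen type-space sets). Concretely, for disjoint indices $i_{1},\dots,i_{n},j_{1},\dots,j_{m}$ and quantifier-free $\theta, \psi$, one has
\begin{equation*}
g^{-1}\bigl(\llbracket \theta(i_{1},\dots,i_{n}) \wedge \psi(j_{1},\dots,j_{m})\rrbracket\bigr) \cap S_{\mathbf{x}}^{+}(\mathcal{U}) = [\theta(x_{i_{1}},\dots,x_{i_{n}}) \wedge \psi(x_{j_{1}},\dots,x_{j_{m}})] \cap S_{\mathbf{x}}^{+}(\mathcal{U}),
\end{equation*}
so Fact \ref{fact:dissociated} (applied to $\kmpow{\mu}$, which is adequate since excellent implies adequate) gives
\begin{equation*}
\nu(\llbracket \theta(i_{1},\dots,i_{n}) \wedge \psi(j_{1},\dots,j_{m})\rrbracket) = \nu(\llbracket \theta(i_{1},\dots,i_{n})\rrbracket) \cdot \nu(\llbracket \psi(j_{1},\dots,j_{m})\rrbracket).
\end{equation*}
This is exactly dissociation of $\nu$ on basic clopen sets, and the extension to arbitrary $\mathcal{L}$-formulas (which for $\str_{\mathcal{L}}$ reduce to Borel sets in the quantifier-free vocabulary) is routine.

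Finally, by Fact \ref{fact:ergodic}, $\nu$ is ergodic. The set $\mathbb{A}_{N}$ is a Borel subset of $\str_{\mathcal{L}}$ (by the Scott result cited in the text) which is genuinely invariant under the $\Sym(\mathbb{N})$-action, so in particular $\nu$-a.s.\ invariant. Hence $\nu(\mathbb{A}_{N}) \in \{0,1\}$, and using $g^{-1}(\mathbb{A}_{N}) = \mathbb{B}_{N}$ we conclude
\begin{equation*}
\kmpow{\mu}(\mathbb{B}_{N}) = \kmpow{\mu}(g^{-1}(\mathbb{A}_{N})) = g_{*}(\kmpow{\mu})(\mathbb{A}_{N}) = \nu(\mathbb{A}_{N}) \in \{0,1\}.
\end{equation*}
The main (very mild) obstacle is bookkeeping around the restriction to $S_{\mathbf{x}}^{+}(\mathcal{U})$ and making sure that the identification of clopen pullbacks under $g$ is clean enough that Fact \ref{fact:dissociated} applies directly; the substantive work is entirely carried by the three cited facts.
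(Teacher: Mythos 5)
Your proposal is correct and takes essentially the same route as the paper: excellence gives $\Sym(\mathbb{N})$-invariance of $g_*(\kmpow{\mu})$ via Fact~\ref{fact:excellent}, Fact~\ref{fact:dissociated} transfers to dissociation, Fact~\ref{fact:ergodic} upgrades this to ergodicity, and $\Sym(\mathbb{N})$-invariance of $\mathbb{A}_N$ finishes. The only point of difference is cosmetic: you flag the bookkeeping around the pullback identity $g^{-1}(\llbracket\theta(i_1,\dots,i_n)\rrbracket) = [\theta(x_{i_1},\dots,x_{i_n})]$ and the monotone-class extension from clopens, which the paper elides by invoking Fact~\ref{fact:dissociated} directly; your extra care is warranted since the pullback identity is clean only for quantifier-free $\theta,\psi$ (where quantifiers over $\mathbb{N}$ versus $\mathcal{U}$ do not diverge), and that is exactly the generating $\pi$-system one needs.
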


\begin{proof} Since $\mu$ is $M$-excellent and does not concentrate on points, we have that $g_{*}(\kmpow{\mu})$ is $\Sym(\mathbb{N})$-invariant by Fact \ref{fact:excellent}. By Fact \ref{fact:dissociated}, we conclude that $g_{*}(\kmpow{\mu})$ is dissociated, and so by Fact \ref{fact:ergodic}, we have that $g_{*}(\kmpow{\mu})$ is ergodic. The set $\mathbb{A}_{N}$ is clearly $\Sym(\mathbb{N})$-invariant and Borel (\cite[II.16.6]{kechris2012classical}). Thus, 
\begin{equation*}
    \kmpow\mu(\mathbb{B}_N) =\kmpow\mu(g^{-1}(\mathbb{A}_N))  = g_{*}(\kmpow{\mu})(\mathbb{A}_N) \in \{0,1\}. \qedhere 
\end{equation*}
\end{proof}

\noindent We are left with the following question: 

\begin{question} Is the conclusion of \cref{prop:01} true if the hypothesis of \emph{excellence} is replaced with \emph{adequacy}? What can be said about dissociated measures which are not $\Sym(\mathbb{N})$-invariant? 
\end{question}

\section{Graphons, generic sampling, and the Rado graph}
\label{sec:graphon}

The purpose of this section is to focus on generic sampling over the Rado graph. We first consider generic sampling with respect to $\EmptySet$-invariant Keisler measures. These measures were classified in the late 1980s by Albert \cite{albert1989random}. We prove that generic sampling with respect to such measures typically results in a copy of the Rado graph, and we also observe an interesting edge case phenomenon involving threshold graphs. We then move on to the main result of the section, the \emph{graphon representation theorem}.

Throughout this section, we let $T$ be the theory of the Rado graph in the language $\mathcal{L} = \{R(x,y)\}$. We let $\mathcal{U}$ be a monster model of $T$ and fix a countable elementary submodel $M$. 

\subsection{$\EmptySet$-Invariant measures}

In \cite[Theorem 1]{albert1989random}, Albert gave a classification of $\EmptySet$-invariant Keisler measures on the Rado graph. These measures are in one-to-one correspondence with Borel probability measures on $[0,1]$ (which we denote as $\mathcal{M}([0,1])$). More explicitly, for every Borel probability measure $\nu$ on $[0,1]$ there exists a unique $\EmptySet$-invariant Keisler measure $\mu_{\nu} \in \mathfrak{M}_{x}(\mathcal{U})$ such that for any sequence of distinct points $a_1,...,a_n,b_1,...,b_m$ from $\mathcal{U}$, we have 
\begin{equation*}
    \mu_{\nu} \left(\bigwedge_{i=1}^{n}R(x,a_i) \wedge \bigwedge_{j=1}^{m} \neg R(x,b_j) \right) = \int_{0}^{1} t^{n} (1-t)^{m} d\nu, 
\end{equation*}
and moreover, all $\EmptySet$-invariant Keisler measures on the Rado graph arise in this way. More generally, all the measures defined above, i.e., of the form $\mu_{\nu}$, are $\EmptySet$-definable, which implies that they are adequate over any model.

We now describe all measures $\mathbb{P}_{\mu_{\nu}}$ that arise via generic sampling (with respect to the measures $\mu_{\nu}$ described above).  
We first observe two straightforward facts, stated in the next proposition.
\begin{proposition}\label{prop:emptyset1and2}
Let $K_\infty$ denote a clique with a countably infinite vertex set, and $Q_\infty$ an anti-clique with a countably infinite vertex set. Then we have: 
\begin{enumerate}
    \item $\kmpow{\mu_{\delta_1}}(\mathbb{B}_{K_{\infty}}) = 1$. 
    \item  $\kmpow{\mu_{\delta_0}}(\mathbb{B}_{Q_{\infty})} =1$. 
\end{enumerate}
\end{proposition}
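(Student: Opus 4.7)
The plan is to note that each of $\mu_{\delta_1}$ and $\mu_{\delta_0}$ is a Dirac measure on a single complete type over $\mathcal{U}$, and then to directly compute the $\kmpow{\mu_{\delta_c}}$-measure of the edge events $R(x_i,x_j)$ using the iterated Morley product. Unwinding Albert's formula at $\nu = \delta_1$ gives $\mu_{\delta_1}(R(x,a)) = 1$ for every $a \in \mathcal{U}$, so $\mu_{\delta_1}$ is the point mass on the unique complete type $p^+ \in S_x(\mathcal{U})$ extending $\{R(x,a) : a \in \mathcal{U}\}$ (this type is realized by quantifier elimination, and since $R$ is irreflexive it also contains $x \neq a$ for every $a$, so $\mu_{\delta_1}$ does not concentrate on points). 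The analogous statement holds for $\mu_{\delta_0}$. By the remarks preceding the proposition, both $\mu_{\delta_c}$ are $\emptyset$-definable and therefore $M$-adequate, so $\kmpow{\mu_{\delta_c}}$ is defined.

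Writing $\mu = \mu_{\delta_1}$, I will show $\kmpow{\mu}(R(x_i,x_j)) = 1$ for every pair $i \neq j$. By symmetry of $R$ in $T$, it suffices to handle $i < j$. From the recursive definition $\mu^{(j)} = \mu(x_j) \otimes \mu^{(j-1)}(x_1,\dots,x_{j-1})$, viewing $R(x_i,x_j)$ with $x_j$ as variable and $x_i$ as parameter, we obtain
\begin{equation*}
\mu^{(j)}(R(x_i,x_j)) = \int_{S_{(x_1,\dots,x_{j-1})}(M)} F_{\mu}^{R}(q) \, d\mu^{(j-1)}(q),
\end{equation*}
where $F_\mu^R(q) = \mu(R(b_i,x_j))$ for any $\bar b \models q$. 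The initial computation yields $F_\mu^R \equiv 1$, so $\mu^{(j)}(R(x_i,x_j)) = 1$ and hence $\kmpow{\mu}(R(x_i,x_j)) = 1$.

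To finish, I will combine the countably many measure-one clopen events:
\begin{equation*}
\kmpow{\mu}\left( \bigcap_{i \neq j} [R(x_i,x_j)] \cap S_{\mathbf{x}}^+(\mathcal{U}) \right) = 1,
\end{equation*}
using that $S_{\mathbf{x}}^+(\mathcal{U})$ has full $\kmpow\mu$-measure because $\mu$ does not concentrate on points. Every type $p$ in this set has $R(x_i,x_j) \in p$ for all distinct $i,j$ and pairwise distinct realizations of the $x_i$'s, so $p_{\mathcal{L}} \cong K_\infty$ and $p \in \mathbb{B}_{K_\infty}$; this establishes (1). The proof of (2) is word-for-word identical with $R$ replaced by $\neg R$ and $K_\infty$ replaced by $Q_\infty$.

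The only mildly subtle point is the asymmetry built into the definition of the Morley product (only the right factor is integrated against, using the definability of the left factor), but this causes no trouble here because $R$ is a symmetric relation, so the case $j < i$ reduces to $i < j$ and no appeal to self-commutativity or excellence is required.
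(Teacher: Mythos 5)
Your proof is correct; the paper states this proposition as a ``straightforward fact'' without supplying a proof, and your argument is precisely the direct computation the authors evidently have in mind: identifying $\mu_{\delta_1}$ as the Dirac mass on the unique complete type extending $\{R(x,a):a\in\mathcal{U}\}$ (non-realized by irreflexivity, hence $\mu_{\delta_1}$ does not concentrate on points), checking $F_\mu^{R}\equiv 1$ so that every edge event $[R(x_i,x_j)]$ has $\kmpow{\mu}$-measure one, and intersecting these countably many full-measure clopens with the full-measure set $S_{\mathbf{x}}^{+}(\mathcal{U})$ to land inside $\mathbb{B}_{K_\infty}$. Your remark about the asymmetry of the Morley product being harmless here because $R$ is symmetric is also correct and is the one point that deserves explicit mention.
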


When $\nu$ is a non-trivial mixture of point masses on $[0, 1]$, the description of $\kmpow{\mu_\nu}$ is more complicated. The next proposition shows that when our Borel probability measure $\nu$ concentrates on $\{0,1\}$, then $\mathbb{P}_{\mu_{\nu}}$ does not concentrate on any countable graph. The intuition is that generic sampling with respect to $\mu_{\nu}$ behaves similarly to the construction of threshold graphs. We refer the reader to \cref{remark:threshold} for details.  

\begin{proposition}\label{prop:emptyset3} 
    Let $\nu$ be a Borel probability measure on $[0, 1]$ and suppose $\nu = r\delta_{1} + (1-r)\delta_{0}$ for some $r \in (0,1)$. Then for any $\mathcal{L}$-structure $N$, we have that $\kmpow{\mu_{\nu}}(\mathbb{B}_N) = 0$. Further, $g_{*}(\mathbb{P}_{\mu_{\nu}})$ is not $\Sym(\mathbb{N})$-invariant.
\end{proposition}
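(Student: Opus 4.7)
The plan is to unpack generic sampling against $\mu_\nu$ explicitly and then address the two conclusions in turn. Since $\nu=r\delta_1+(1-r)\delta_0$, Albert's formula gives, for any distinct $a_1,\ldots,a_n,b_1,\ldots,b_m\in\mathcal{U}$,
\[
\mu_\nu\Bigl(\textstyle\bigwedge_{i=1}^n R(x,a_i)\wedge\bigwedge_{j=1}^m\neg R(x,b_j)\Bigr)=r\cdot\mathbf{1}[m=0]+(1-r)\cdot\mathbf{1}[n=0].
\]
Hence under the iterated Morley product, each newly-sampled $x_{n+1}$ is, independently of earlier variables, adjacent to all of $\{x_1,\ldots,x_n\}$ (probability $r$) or to none (probability $1-r$); writing $\epsilon_{n+1}\in\{0,1\}$ accordingly, $g_*(\kmpow{\mu_\nu})$ is the law of the labeled threshold graph on $\mathbb{N}$ with i.i.d.\ Bernoulli$(r)$ creation sequence $(\epsilon_i)_{i\geq 2}$.

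For the non-invariance assertion, take $\varphi:=R(x_1,x_2)\wedge R(x_1,x_3)\wedge\neg R(x_2,x_3)$ and $\sigma=(1\,3)\in\Sym(\mathbb{N})$. A direct Morley-product computation gives $\kmpow{\mu_\nu}(\varphi)=0$, because sampling $x_3$ over $\{x_1,x_2\}$ forces $R(x_3,x_1)\leftrightarrow R(x_3,x_2)$; meanwhile $\kmpow{\mu_\nu}(\sigma\varphi)=\kmpow{\mu_\nu}(R(x_3,x_1)\wedge R(x_3,x_2)\wedge\neg R(x_1,x_2))=(1-r)r>0$. Combined with the identity $g_*(\kmpow{\mu_\nu})(\llbracket\theta(i_1,\ldots,i_n)\rrbracket)=\kmpow{\mu_\nu}(\theta(x_{i_1},\ldots,x_{i_n}))$, this witnesses the failure of $\Sym(\mathbb{N})$-invariance of $g_*(\kmpow{\mu_\nu})$.

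For the zero-measure assertion, the plan is to exhibit an atomless isomorphism invariant. Set $D:=\{x_i:\epsilon_i=1\}$ and $I:=\{x_1\}\cup\{x_i:\epsilon_i=0,\,i\geq 2\}$; by the strong law of large numbers, a.s.\ $|D|=|I|=\aleph_0$, with $D$ a clique and $I$ an independent set. For $w\in D$ define $\alpha(w):=|N(w)\cap I|$, and enumerate $F:=\{i\geq 2:\epsilon_i=1\}$ as $f_1<f_2<\cdots$: then $\alpha(x_{f_k})=f_k-k$, so the non-decreasing sequence $(\alpha(x_{f_k}))_{k\geq 1}$ has $\alpha(x_{f_1})=f_1-1$ of Geometric$(r)$ law on $\{1,2,\ldots\}$ and i.i.d.\ increments $\alpha(x_{f_{k+1}})-\alpha(x_{f_k})$ with law $\mathbb{P}(\cdot=t)=r(1-r)^t$ on $\{0,1,\ldots\}$. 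Any isomorphism $G(\epsilon)\cong N$ pulls back a clique/independent-set partition of $N$ to one of $G(\epsilon)$ whose $\alpha$-multiset matches $(\alpha(x_{f_k}))_{k\geq 1}$; and if $(D_1,I_1),(D_2,I_2)$ are two such partitions of any graph with both sides infinite, then $D_2\cap I_1$ is simultaneously contained in a clique and in an independent set, forcing $|D_2\cap I_1|\leq 1$, and symmetrically $|I_2\cap D_1|\leq 1$, so $|D_1\triangle D_2|\leq 2$. Hence there are only countably many candidate $\alpha$-multisets arising from $N$. For any fixed non-decreasing $(b_k)$, the probability that $(\alpha(x_{f_k}))_{k\geq 1}=(b_k)$ is an infinite product of factors each at most $r<1$, and so equals zero; summing over countably many candidates gives $\kmpow{\mu_\nu}(\mathbb{B}_N)=0$.

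The main subtle step is the isomorphism-invariance argument in the last paragraph: it depends on the rigidity of clique/independent-set partitions of an infinite graph whose clique and independent set are both countably infinite, which localizes all such partitions to finite perturbations of any fixed one. Given this, the sorted $\alpha$-sequence becomes an essentially canonical isomorphism invariant whose distribution is atomless by the geometric-increment computation. The non-invariance conclusion, by contrast, is short and purely mechanical once the threshold-graph structure of generic sampling is identified.
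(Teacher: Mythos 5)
Your proof is correct, and your non-invariance argument is the same in spirit as the paper's (different choice of distinguishing formula; the paper uses $R(x_3,x_2)\wedge R(x_2,x_1)$ versus $R(x_3,x_2)\wedge R(x_3,x_1)$ with values $r^2$ and $r$, while you use $\varphi$ and $\sigma\varphi$ with values $0$ and $(1-r)r$). The proof that $\kmpow{\mu_\nu}(\mathbb{B}_N)=0$, however, takes a genuinely different route. The paper partitions the support of $\kmpow{\mu_\nu}$ by the "threshold signature" $A\subseteq[m]$ (which of $x_1,\dots,x_m$ are of type $p_+$), proves with explicit first-order formulas ($W_1,W_2,\Psi$) that signatures differing outside position $1$ yield elementarily inequivalent graphs, and concludes $\kmpow{\mu_\nu}(\mathbb{B}_N)\le 2\max(r,1-r)^m$ for all $m$. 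You instead construct an essentially canonical isomorphism invariant --- the sorted sequence of clique-to-independent-set degrees $\alpha(x_{f_k})=f_k-k$ --- use the $|D_1\triangle D_2|\le 2$ rigidity of infinite clique/independent-set partitions to reduce the match-up to a countable family of candidate multisets, and then observe that the sorted $\alpha$-sequence has i.i.d.\ geometric increments and so assigns probability zero to any fixed candidate. Your argument trades the paper's formula-engineering for a renewal-process computation; the paper's is cleaner in staying entirely inside the definable category, whereas yours makes explicit what the atomless invariant of the threshold graph actually is, which arguably better motivates why the result holds. A minor point: the bound $\mathbb{P}[\alpha(x_{f_1})=b_1]\le r$ and $\mathbb{P}[\alpha(x_{f_{k+1}})-\alpha(x_{f_k})=t]\le r$ are what make the infinite product vanish, and this is correct since the geometric pmf is maximized at its smallest support point; it is worth flagging since the bound must be a constant strictly below $1$.
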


\begin{proof}
Let  $p_{+} \defas \{R(x,b) \st b \in \mathcal{U}\}$ and $p_{-} \defas \{\neg R(x,b) \st b \in \mathcal{U}\}$. Then we have
$\mu_{\nu} = r\delta_{p_{+}} + (1-r)\delta_{p_{-}}$.
Consider the projections $\pi_{i} \colon S_{\mathbf{x}}(\mathcal{U}) \to S_{x_i}(\mathcal{U})$. For each $m$ and $A \subseteq [m]$, consider the set
\begin{equation*}
    X_{A} \defas \bigcap_{i \in A} \pi_i^{-1}(\{p_+\}) \cap \bigcap_{i \in [m] \backslash A} \pi_i^{-1}(\{p_{-}\}). 
\end{equation*}
The following observations are immediate:  
\begin{enumerate}[label=($\alph*$)]
    \item For each $m$, the set $Y_{m} \defas \{X_{A} \st A \subseteq [m]\}$ forms a partition of $\supp(\kmpow{\mu_{\nu}})$. 
    \item If $k > m$, then $Y_{k}$ refines $Y_{m}$.
    \item If $A \subseteq [m]$, then  $\kmpow{\mu_{\nu}}(X_{A}) = r^{|A|}(1-r)^{|[m] \backslash A|}$. 
\end{enumerate}  

\begin{claim}
\label{claim:strange}
For each $m\geq 2$ and distinct $A, B \subseteq [m]$ such that $A$ and $B$ differ by an element other than $1$ (i.e., there exists some $k \in A \triangle B$ such that $k \neq 1$), if $p \in X_{A}$ and $q \in X_{B}$ then $p_{\mathcal{L}} \not \equiv q_{\mathcal{L}}$.
\end{claim}

Note that from the proof of \cref{claim:strange} below, when $A\triangle B = \{1\}$, the conclusion of
\cref{claim:strange} need not hold since the first sampled vertex
is vacuously both connected and disconnected to 
all vertices that arose before it.

Using \cref{claim:strange}, it follows that for any $\mathcal{L}$-structure $N$, we have $g_{*}(\kmpow{\mu_{\nu}})(\mathbb{A}_N) = 0$, and so $\kmpow{\mu_{\nu}}(\mathbb{B}_N) = 0$, establishing the first half of the proposition. Indeed, \cref{claim:strange} allows one to partition the support of $g_{*}(\mathbb{P}_{\mu_{\nu}})$ into collections of $\mathcal{L}$-structures such that each part of the partition has arbitrarily small measure.

Further, since $\nu = r\delta_1 + (1-r)\delta_0$, we have that
\begin{equation*}
    \mathbb{P}_{\mu_{\nu}}(R(x_3,x_2) \wedge R(x_2,x_1)) = r^2,
\end{equation*}
and
\begin{equation*}
    \mathbb{P}_{\mu_{\nu}}(R(x_3,x_2) \wedge R(x_3,x_1)) = r^2 + r(1-r) = r.  
\end{equation*}
Hence $g_{*}(\mathbb{P}_{\mu_{\nu}})$ is not $\Sym(\mathbb{N})$-invariant, establishing the second half of the proposition.

\begin{proof}[Proof of \cref{claim:strange}] We may interpret each element of $\supp(\mathbb{P}_{\mu_{\nu}})$ as instructions for constructing an infinite graph. If $p \in \supp(\mathbb{P}_{\mu_{\nu}})$, then $p_{\mathcal{L}}$ is isomorphic to the graph on $\{1,2,3,...\}$ built in stages as follows: At stage $\ell$ where $\ell \geq 2$, we add the vertex $\ell$ so that it is either adjacent to every vertex $j < \ell$ or is non-adjacent to every vertex $j < \ell$. Thus if $p \in \supp(\kmpow{\mu_{\nu}})$, then $p_{\mathcal{L}}$ has a definable clique and a definable anti-clique which are both non-trivial (i.e., have at least two vertices) with probability 1. Let
\begin{equation*}
    W_1(x) := \forall y \forall z ((R(x,y) \wedge R(x,z) \wedge y\neq z) \to R(y,z)). 
\end{equation*}
Notice that $W_1$ defines the set of vertices $\ell$ that are not adjacent to any vertices $j$ such that $j < \ell$, along with possibly an initial segment of elements that forms a clique. To remove this initial segment, we define
\begin{equation*}
    W_2(x) := W_1(x) \wedge \neg \exists y (W_1(y) \wedge R(x,y)). 
\end{equation*}
Notice that $W_2(x)$ defines an anti-clique and $\neg W_2(x)$ defines a clique. So, suppose that $A,B \subseteq [m]$ for some $m \geq 2$ with $p \in X_{A}$ and $q \in X_{B}$ where there exists $k \in A \triangle B$ such that $k \neq 1$. Let $k'$ be the smallest element of $A \triangle B$ such that $k' \neq 1$ and suppose without loss of generality that $k' \in A$. For $C \in \{A,B\}$, define $C' = C \cap \{1,...,k'\}$ and let $t = |A'|$. Let
\begin{equation*}
    \Psi := \exists x \exists!y_1,...,y_{t-1}   \bigwedge_{i=1}^{t-1} \left(\neg W_{2}(y_i) \wedge \neg R(x,y_i) \right). 
\end{equation*}
Then $q_{\mathcal{L}} \models \Psi$ while $p_{\mathcal{L}} \models \neg \Psi$, establishing the claim. 
\end{proof}
This completes the proof of \cref{prop:emptyset3}.
\end{proof}

\begin{remark}\cref{prop:emptyset1and2,prop:emptyset3} together show that the operation $\nu \mapsto \kmpow{\mu_\nu}$ is not linear. Specifically, for $r \in (0,1)$ and Borel probability measures $\nu_0$ and $\nu_1$ on $[0,1]$,
while $\nu = r \nu _0 + (1-r) \nu_1$ 
does imply that
$\mu_\nu = r \mu_{\nu_0} +  (1-r) \mu_{\nu_1}$,
it does not imply that
$\kmpow{\mu_\nu} = r \kmpow{\mu_{\nu_0}} + (1-r)\kmpow{\mu_{\nu_1}}$. 
\end{remark}

\begin{remark}\label{remark:threshold}
When $r\in (0, 1)$ and $\nu = r \delta_0 + (1- r)\delta_1$, while the measure $g_{*}(\mathbb{P}_{\mu_\nu})$ does not concentrate on the isomorphism class in $\str_{\mathcal{L}}$ of any particular graph, it does still concentrate on a particularly simple collection of graphs. Specifically, it concentrates on the collection of 
\emph{threshold graphs} \cite{diaconis2008threshold,blekherman2024threshold},
which are constructed according to the underlying ordering on $\Nats$. 
In this construction, for each vertex $k \in \Nats$, with (independent) probability $r$ the vertex $k$ has an edge with all the vertices $0, \ldots, k-1$, and otherwise an edge with none of them.
Moreover, $\mu_{\nu}$ is not $M$-excellent, and the measure $\kmpow{\mu_{\nu}}$ is not $\Sym(\mathbb{N})$-invariant.
\end{remark}

\begin{remark} It is straightforward to check that if $\nu = \delta_{r}$ for some $r \in (0,1)$, then $\mu_{\nu}$ corresponds to a weighted coin-flipping measure. More explicitly, for any sequence of distinct points $a_1,...,a_n,b_1,...,b_m$ in $\mathcal{U}$, we have
\begin{equation*}
    \mu_{\nu} \left(\bigwedge_{i =1}^{n} R(x,a_i) \wedge \bigwedge_{j = 1}^{m} \neg R(x,b_j) \right) = r^{n}(1-r)^{m}. 
\end{equation*}
Any such measure is $M$-excellent \cite[Example 2.4]{gannon2024model} and does not concentrate on points. Thus $g_{*}(\kmpow{\mu_{\nu}})$ is $\Sym(\mathbb{N})$-invariant by Fact \ref{fact:excellent}. 
\end{remark}

\begin{proposition}\label{prop:emptyset4}
Let $\nu \in \mathcal{M}([0,1])$ and suppose that $\nu$ is not 
the measure $r\delta_{1} + (1-r)\delta_{0}$ for any $r \in [0,1]$.
Then $\kmpow{\mu_{\nu}}(\mathbb{B}_{\Rado}) =1$, where $\Rado$ is the Rado graph. 
\end{proposition}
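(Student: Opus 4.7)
The plan is to verify that with $\kmpow{\mu_\nu}$-probability one the structure $p_{\mathcal{L}}$ is a countably infinite graph satisfying the extension property which characterizes the Rado graph: for every disjoint pair of finite sets $I, J \subseteq \mathbb{N}$ there exists $k \in \mathbb{N} \setminus (I \cup J)$ such that in $p_{\mathcal{L}}$ the vertex $x_k$ is $R$-adjacent to every $x_i$ with $i \in I$ and non-adjacent to every $x_j$ with $j \in J$. Countable infiniteness will follow from the $\EmptySet$-invariance of $\mu_\nu$ (which forces $\mu_\nu(x=a)=0$ for every $a \in \mathcal{U}$, so $\mu_\nu$ does not concentrate on points), combined with \cite[Proposition 3.7]{gannon2024model}, ensuring that $\supp(\kmpow{\mu_\nu}) \subseteq S_{\mathbf{x}}^{+}(\mathcal{U})$.

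For each disjoint pair $(I, J)$, set $n=|I|$ and $m=|J|$ and
\begin{equation*}
    c_{n,m} \defas \int_0^1 t^n (1-t)^m\, d\nu(t).
\end{equation*}
The hypothesis on $\nu$ is equivalent to $\nu((0,1))>0$, and since $t^n(1-t)^m>0$ on $(0,1)$ we conclude $c_{n,m}>0$. For $k>\max(I\cup J)$ let $\varphi_k$ denote the formula $\bigwedge_{i\in I}R(x_k,x_i)\wedge\bigwedge_{j\in J}\neg R(x_k,x_j)$. I would then prove by induction on $t$ that for any $k_1<\cdots<k_t$ in $\mathbb{N}$ all exceeding $\max(I\cup J)$,
\begin{equation*}
    \kmpow{\mu_\nu}\Bigl(\bigwedge_{s=1}^t \neg \varphi_{k_s}\Bigr)=(1-c_{n,m})^t.
\end{equation*}
For the inductive step I would compute $\mu^{(k_t)}=\mu_\nu(x_{k_t})\otimes\mu^{(k_t-1)}$ applied to $\bigwedge_{s\le t}\neg\varphi_{k_s}$: the subformulas with $s<t$ do not involve $x_{k_t}$, while $\neg\varphi_{k_t}$ has free variable $x_{k_t}$ and parameters among $\{x_i:i\in I\cup J\}$. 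On the $\mu^{(k_t-1)}$-full-measure set where the realizations of these parameters are pairwise distinct, Albert's formula combined with the $\EmptySet$-invariance of $\mu_\nu$ gives that the relevant Borel-definable function is identically $1-c_{n,m}$; integrating reduces the computation to $(1-c_{n,m})\cdot\mu^{(k_t-1)}(\bigwedge_{s<t}\neg\varphi_{k_s})$. The unused variables $x_\ell$ with $k_{t-1}<\ell\le k_t-1$ can then be dropped via the separated amalgam property of the Morley product, making the inner factor equal to $\mu^{(k_{t-1})}(\bigwedge_{s<t}\neg\varphi_{k_s})$, which is $(1-c_{n,m})^{t-1}$ by the inductive hypothesis.

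Letting $t\to\infty$ yields $\kmpow{\mu_\nu}(\bigwedge_{k>\max(I\cup J)}\neg\varphi_k)=0$ for each fixed pair $(I,J)$. Since there are only countably many such pairs, a countable union gives $\kmpow{\mu_\nu}$-probability one to the statement that $p_{\mathcal{L}}$ satisfies the extension property, whence $p_{\mathcal{L}}\cong\Rado$ almost surely. The main technical obstacle is the inductive probability computation; its crux is the constancy (on a full-measure set) of the Borel-definable function in the Morley product step, which rests on the $\EmptySet$-invariance of $\mu_\nu$ and the fact that $\kmpow{\mu_\nu}$ is supported on $S_{\mathbf{x}}^{+}(\mathcal{U})$. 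The remaining bookkeeping concerning marginalization of unused variables is routine from the definitions in Section~\ref{sec:prelim}.
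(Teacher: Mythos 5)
Your proof is correct and follows essentially the same route as the paper: both arguments reduce to showing that, for each finite configuration of vertices, the extension property fails at all later indices with probability decaying geometrically, using $\EmptySet$-invariance of $\mu_\nu$ and Albert's formula to see that the relevant fiber function $F_{\mu_\nu}^{\psi}$ is constant equal to $1-c_{n,m} < 1$ on a full-measure set, and then concluding by a countable intersection over all finite configurations. The paper writes the iterated Morley product as a single integral of a product of fiber functions rather than peeling off one factor per induction step, and reads the positivity of $c_{n,m}$ off a point of $\supp(\nu)$ in $(0,1)$ rather than from $\nu((0,1))>0$ directly, but these are cosmetic differences; your bookkeeping (including the explicit restriction to the full-measure set of pairwise-distinct realizations) is sound.
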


\begin{proof}
This is a generalization of \cite[Example 5.1]{gannon2024model}. First note that the substructure induced on any subset of the underlying set of a graph is also a graph. For each positive integer $n$ and subset $A$ of $[n]$, define the set 

\begin{equation*}
    K_{n,A} \coloneqq \bigcup_{\ell=1}^{\infty} \left[\bigwedge_{i \in A }R(x_i,x_\ell) \wedge \bigwedge_{j \in [n] \backslash A} \neg R(x_{j},x_\ell)\right]. 
\end{equation*}
For each such pair $(n,A)$, we will prove that $\kmpow{\mu_\nu}(K_{n,A}) = 1$. Fix a pair $(n,A)$. For each $\ell \in \mathbb{N}$, let 
\begin{equation*}\psi_{\ell}(\bar{x},x_{\ell}) \coloneqq  \bigvee_{i \in A} \neg R(x_i,x_\ell) \vee \bigvee_{j \in [n] \backslash A} R(x_{j},x_\ell). 
\end{equation*} 
Now consider the following computation: 
\begin{align*}
    \kmpow{\mu_\nu}(K_{n,A})  &=\kmpow{\mu} \left( \bigcup_{\ell=1}^{\infty} \left[\bigwedge_{i \in A}^{n}R(x_i,x_\ell) \wedge  \bigwedge_{j \in [n] \backslash A}\neg R(x_{j},x_\ell)\right]  \right) \\
    &=\lim_{k \to \infty} \kmpow{\mu_\nu} \left( \left[\bigvee_{\ell=1}^{k} \left( \bigwedge_{i \in A}R(x_i,x_\ell) \wedge \bigwedge_{j \in [n] \backslash A} \neg R(x_{j},x_\ell) \right) \right]  \right)\\
    &=\lim_{k \to \infty} \left( 1 - \kmpow{\mu_\nu} \left( \left[\bigwedge_{\ell=1}^{k} \left( \bigvee_{i \in A} \neg R(x_i,x_\ell) \vee \bigvee_{j \in [n] \backslash A} R(x_{j},x_\ell) \right) \right]  \right) \right) \\
    &=1 - \lim_{k \to \infty} \kmpow{\mu_\nu} \left( \left[\bigwedge_{\ell=1}^{k} \left( \bigvee_{i \in A} \neg R(x_i,x_\ell) \vee \bigvee_{j \in [n] \backslash A} R(x_{j},x_\ell) \right) \right]  \right)\\
    &\geq 1 - \lim_{\substack{k > n \\ k \to \infty}} \kmpow{\mu_\nu} \left( \left[\bigwedge_{\ell >  n }^{k} \left( \bigvee_{i \in A} \neg R(x_i,x_\ell) \vee \bigvee_{j \in [n] \backslash A} R(x_{j},x_\ell) \right) \right]  \right)\\
    &\overset{}{=}1 - \lim_{\substack{k > n \\ k \to \infty}} \int_{S_{\bar{x}}(M)} \prod_{\ell > n }^k F_{{\mu_{\nu}(x_{\ell})}}^{\psi_\ell} d\mu_\nu^{(n)} \\
    &\overset{}{=}1 - \lim_{\substack{k > n \\ k \to \infty}} \int_{S_{\bar{x}}(M)} r^{k-n} d\mu_\nu^{(n)} \\
    &= 1 - 0 \\
    &= 1.
\end{align*}

We notice that if $q \in S_{\bar{x}}(M)$ and $(\bar{a},\bar{b}) \models q((x_i)_{i \in A};(x_i)_{i \in [n]\backslash A})$, then we have 

\begin{align*}
    r = F_{\mu_{\nu}(x_{\ell})}^{\psi_\ell}(q) &= {\mu_{\nu}} \left(\bigvee_{i \in A} \neg R(a_i,x_\ell) \vee \bigvee_{j \in [n] \backslash A} R(b_j,x_\ell) \right) \\ &= 1 - {\mu_{\nu}} \left(\bigwedge_{i \in A} \neg R(a_i,x_\ell) \wedge \bigwedge_{j \in [n] \backslash A} R(b_j,x_\ell) \right) \\ &= 1 - \int_0^1 t^{|A|} (1-t)^{n-|A|} d\nu.
\end{align*}
The map $t \mapsto t^{|A|}(1-t)^{n-|A|}$ is a continuous function. By our hypothesis about $\nu$, there exists a point $t_{*} \in \supp(\nu)$ such that $t_{*} \not\in \{0, 1\}$. Thus, $ t_{*}^{|A|} (1-t_*)^{n-|A|} > 0$ and so $\int_0^1 t^{|A|} (1-t)^{n-|A|} d\nu > 0$. 
This implies that $r < 1$. 

Let $K = \bigcap_{n \geq 1} \bigcap_{A \subseteq n} K_{n,A}$. Then $\kmpow{\mu_{\nu}}(K) = 1$ and for every $p \in K$, we have $p_{\mathcal{L}} \cong \Rado$. This concludes the proof. 
\end{proof}

\begin{warning} There are $\EmptySet$-invariant measures that are not excellent. Simple examples can be found in \cref{prop:emptyset3}. We consider one more below. 
\end{warning}

Let $\Leb$ denote the restriction of Lebesgue measure to $[0,1]$.

\begin{lemma}
The measure $g_{*}(\kmpow{\mu_{\Leb}})$ is not $\Sym(\mathbb{N})$-invariant.
\end{lemma}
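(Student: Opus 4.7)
The plan is to exhibit two quantifier-free three-variable formulas $\varphi(x_1,x_2,x_3)$ and $\varphi'(x_1,x_2,x_3)$ that are related by a permutation of the index set $\{1,2,3\}$, and then directly compute that
$\kmpow{\mu_{\Leb}}(\varphi) \neq \kmpow{\mu_{\Leb}}(\varphi')$.
Under the continuous map $g\colon S_{\mathbf{x}}^{+}(\mathcal{U}) \to \str_{\mathcal{L}}$, these formulas correspond to basic clopen subsets of $\str_{\mathcal{L}}$ that are translates of one another under the relevant element of $\Sym(\mathbb{N})$, so unequal $\kmpow{\mu_{\Leb}}$-measure will contradict $\Sym(\mathbb{N})$-invariance of the pushforward. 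Explicitly, I would take
\[
\varphi := R(x_1,x_2)\wedge R(x_2,x_3)\wedge \neg R(x_1,x_3), \qquad
\varphi' := R(x_1,x_3)\wedge R(x_2,x_3)\wedge \neg R(x_1,x_2),
\]
and note that since $R$ is symmetric, applying the $3$-cycle $\sigma=(1\,2\,3)$ to the variables of $\varphi$ gives $\varphi'$. Both formulas describe a path on three vertices, but with different ``middle'' vertex.

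The computation is just an unwinding of the Morley product $\mu^{(3)} = \mu_{\Leb}(x_3) \otimes \mu^{(2)}_{\Leb}(x_1,x_2)$, using Albert's formula
$\mu_{\Leb}\!\left(\bigwedge_{i} R(x,a_i)\wedge \bigwedge_{j}\neg R(x,b_j)\right) = \int_{0}^{1} t^{n}(1-t)^{m}\, d\Leb$.
For $(b_1,b_2)$ realizing $q \in S_{x_1x_2}(M)$, the definable function $F^{\varphi}_{\mu_{\Leb}}(q)$ equals
$\mathbf{1}_{R(b_1,b_2)}\cdot \mu_{\Leb}(R(b_2,x_3)\wedge \neg R(b_1,x_3)) = \mathbf{1}_{R(b_1,b_2)}\cdot \int_{0}^{1} t(1-t)\, d\Leb = \tfrac{1}{6}\,\mathbf{1}_{R(b_1,b_2)}$,
so $\kmpow{\mu_{\Leb}}(\varphi) = \tfrac{1}{6}\cdot \mu^{(2)}_{\Leb}(R(x_1,x_2)) = \tfrac{1}{6}\cdot \tfrac{1}{2} = \tfrac{1}{12}$.
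By contrast, $F^{\varphi'}_{\mu_{\Leb}}(q) = \mathbf{1}_{\neg R(b_1,b_2)}\cdot \mu_{\Leb}(R(b_1,x_3)\wedge R(b_2,x_3)) = \tfrac{1}{3}\,\mathbf{1}_{\neg R(b_1,b_2)}$,
giving $\kmpow{\mu_{\Leb}}(\varphi') = \tfrac{1}{3}\cdot \tfrac{1}{2} = \tfrac{1}{6}$.

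Since $\tfrac{1}{12}\neq \tfrac{1}{6}$, the Borel sets $\llbracket \varphi(1,2,3)\rrbracket$ and $\llbracket \varphi'(1,2,3)\rrbracket = \sigma\cdot \llbracket \varphi(1,2,3)\rrbracket$ have different $g_{*}(\kmpow{\mu_{\Leb}})$-measure, so $g_{*}(\kmpow{\mu_{\Leb}})$ is not $\Sym(\mathbb{N})$-invariant. There is no real obstacle; the only subtlety is bookkeeping the asymmetry built into the Morley product convention $\mu^{(n+1)} = \mu(x_{n+1})\otimes \mu^{(n)}$: in $\varphi$ the ``distinguished'' middle vertex is the last-sampled $x_2$-then-$x_3$ configuration, whereas in $\varphi'$ the distinguished middle vertex is the last-sampled $x_3$. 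This asymmetry is precisely what the $1/12$ vs.\ $1/6$ discrepancy is measuring, and it is the source of the failure of commutativity, hence of excellence, for $\mu_{\Leb}$.
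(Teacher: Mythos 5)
Your proof is correct and takes essentially the same approach as the paper: both exhibit a pair of permutation-equivalent three-literal formulas whose $\kmpow{\mu_{\Leb}}$-measures differ (the values $\tfrac{1}{12}$ and $\tfrac{1}{6}$, coming from $\int_0^1 t(1-t)\,d\Leb$ versus $\int_0^1 t^2\,d\Leb$ together with the factor $\mu^{(2)}_{\Leb}(R(x_1,x_2)) = \tfrac{1}{2}$). The paper routes this through a two-variable formula $\psi(x,y)$ with auxiliary parameters $b,c \in M$ and then translates to a four-variable clopen in $\str_{\mathcal{L}}$, whereas you work directly with the three-variable path formulas $\varphi$ and $\varphi'$; this is a mild streamlining that avoids the $M$-parameter detour but rests on the identical computation.
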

\begin{proof}
Let $b,c \in M$ and consider $\psi(x,y) := R(x,y) \wedge R(x,b) \wedge \neg R(y,c)$. Recall that $\psi^{*}(y,x)$ denotes the same formula as $\psi(x,y)$ but with the roles of the variables and parameters reversed. Then we have

\begin{align*}
    (\mu_{\Leb}(x) \otimes \mu_{\Leb}(y)) (R(x,y) \wedge R(x,b) \wedge \neg R(y,c)) = \int_{S_{y}(M)} F_{\mu_{\Leb}}^{\psi} d\mu_{\Leb}(y).
\end{align*}
Notice that
\begin{equation*}
    F_{\mu_{\Leb}}^{\psi}(q) = \mu_{\Leb}(R(x,d) \wedge R(x,b) \wedge \neg R(d,c)) = \mathbf{1}_{\neg R(y,c)} \cdot \int_{0}^{1} t^{2} d\Leb = \mathbf{1}_{\neg R(y,c)} \cdot \tfrac{1}{3}. 
\end{equation*}
Hence, 
\begin{equation*}
     \int_{S_{y}(M)} F_{\mu_{\Leb}}^{\psi} d\mu_\Leb(y) = \int_{S_{y}(M)} \mathbf{1}_{\neg R(y,c)} \cdot \tfrac{1}{3} d\mu_\Leb(y) = \tfrac{1}{3} \mu_{\Leb}(\neg R(y,c)) = \tfrac{1}{3} \cdot \tfrac{1}{2} = \tfrac{1}{6}. 
\end{equation*}
Now we compute the Morley product in the other order:
\begin{align*}
       \mu_{\Leb}(y) \otimes \mu_{\Leb}(x) (R(x,y) \wedge R(x,b) \wedge \neg R(y,c)) = \int_{S_{x}(M)} F_{\mu_{\Leb}}^{\psi^*} d\mu_\Leb(x). 
\end{align*}
Notice that 
\begin{equation*}
    F_{\mu_{\Leb}}^{\psi^{*}}(q) = \mu_{\Leb}(R(d,y) \wedge R(d,b) \wedge \neg R(y,c)) = \mathbf{1}_{R(x,b)} \cdot  \int_{0}^{1} t (1-t)d\Leb = \mathbf{1}_{R(x,b)} \cdot \tfrac{1}{6}.
\end{equation*}
Hence, 
\begin{equation*}
    \int_{S_{x}(M)} F_{\mu_{\Leb}}^{\psi^*} d\mu_\Leb(x) = \int_{S_{x}(M)} \mathbf{1}_{R(x,b)} \cdot \tfrac{1}{6}d\mu = \tfrac{1}{2} \cdot \tfrac{1}{6} = \tfrac{1}{12}. 
\end{equation*}
Notice that 
\begin{equation*}
    g_*(\kmpow{\mu_\Leb})(\llbracket R(4,3) \wedge R(4,2) \wedge \neg R(3,1)\rrbracket) = \tfrac{1}{6},
\end{equation*}
while
\begin{equation*}
    g_*(\kmpow{\mu_\Leb})(\llbracket R(3,4) \wedge R(3,2) \wedge \neg R(4,1)\rrbracket) = \tfrac{1}{12}. \qedhere 
\end{equation*}
\end{proof}

\subsection{Graphon representation theorem}

In this subsection, we prove the graphon representation theorem. We begin by fixing some conventions and recalling some basic machinery associated with graphons. 

Recall that $\mathcal{U}$ is a monster model of the theory of the Rado graph and $M$ is a countable elementary substructure. If $A \subseteq \mathcal{U}$, we let $S_{x}^{*}(A)$ denote the collection of types in $S_{x}(A)$ that are not realized in $A$. Since $M$ is a countable model, the type space $S^{*}_{x}(M)$ is uncountable and Polish and thus is Borel-isomorphic to $[0,1]$. As a convention, we will sometimes write $S_{x}^{*}(M)$ simply as $S_{x}^{*}$, dropping the $M$ to shorten notation. 

We let $\lambda$ be the standard fair coin-flipping measure on $\mathcal{L}_{x}(\mathcal{U})$. This measure is uniquely characterized as follows: For any sequence $a_1,...,a_n,b_1,...,b_m$ of distinct elements in $\mathcal{U}$, we have that 
\begin{equation*}
    \lambda\left( \bigwedge_{i=1}^{n} R(x,a_i) \wedge \bigwedge_{j=1}^{m} \neg R(x,b_j) \right) = \frac{1}{2^{n+m}} . 
\end{equation*}
We remark that $\lambda$ concentrates on $S_{x}^{*}(\mathcal{U})$ and the regular Borel probability measure corresponding to the restriction $\lambda|_{M}$ on $\mathcal{L}_{x}(M)$ likewise concentrates on $S^{*}_{x}(M)$. 
Recall that we use the symbol $\Leb$ to denote the Lebesgue measure restricted to $[0,1]$. 

\begin{definition} 
A \defn{graphon} is a measurable function $W \colon [0,1]^{2} \to [0,1]$ such that $W(x,y) = W(y,x)$ for all $x,y \in [0,1]$. 
A graphon is \defn{Borel} when it is a Borel measurable function. 
\end{definition}

Note that if $W$ is Borel, then for any $a \in [0,1]$, the section $W(a,-)\colon [0,1] \to [0,1]$ is also Borel.

For any graphon $W$, we can construct a $\Sym(\mathbb{N})$-invariant probability measure $\mathbb{G}(\Nats, W)$ on $\str_\mathcal{L}$ for $\mathcal{L}= \{R\}$. This measure concentrates on $\mathcal{L}$-structures that are graphs.

\begin{definition}
Let $W$ be a graphon. We define $\mathbb{G}(\Nats, W)$ to be the distribution of the following
random graph (with edge relation $R$) on the vertex set $\Nats$. 
\begin{itemize}
\item[(1)] Select an $\Leb$-i.i.d.\ sequence $(x_i)_{i \in \w}$. 

\item[(2)] Select an $\Leb$-i.i.d.\ array $(\gamma_{i, j})_{i < j \in \Nats}$.

\item[(3)] For $i < j \in \Nats$, 
\[
\mathbb{G}(\Nats,W) \models R(i, j)\text{ if and only if }\gamma_{i, j} < W(x_i, x_j).
\]
\end{itemize}
\end{definition}

For every Borel graphon $W$ we will construct a corresponding Keisler measure $\mu_{W}$. Note that since every graphon agrees with a Borel graphon almost everywhere, we lose no real generality by the restriction to Borel graphons.

\begin{definition} Let $n\ge 1$ and let $G$ be a graph on vertex set $\{1, \ldots, n\}$.
The \defn{graph formula corresponding to $G$} (in the language with binary relation symbol $R$) is 
\begin{equation*}
    \varphi_{G}(x_1,...,x_n) \defas \bigwedge_
    {\substack{i, j\in[n] \\G \models R(i,j)}} R(x_i,x_j) \wedge \bigwedge_{\substack{i,j\in[n] \\ G \models \neg R(i,j)}} \neg R(x_i,x_j). 
\end{equation*}
\end{definition}

\begin{convention} Suppose $G$ is a graph on vertex set $\{1,...,n\}$ and $W$ is a graphon. Given $i,j \in [n]$, for notational compactness we will sometimes write $W^{G(i,j)}(x,y)$ to denote $W(x,y)$ if $G \models R(i,j)$ and $1 - W(x,y)$ if $G \models \neg R(i,j)$. 
\end{convention}

\begin{lemma}
Let $W$ be a graphon, and $n \ge 1$.
Suppose $G$ is a graph on vertex set \{1, \ldots, n\}.
Then
\begin{equation*}
    \mathbb{G}(\mathbb{N}, W)(\llbracket \varphi_{G}(i_1,...,i_n)\rrbracket) =
    \int_{(t_1,..,t_n) \in [0,1]^{n}} 
    \prod_{1 \leq \ell < j \leq n}
   W^{G(\ell, j)}(t_\ell,t_j) \,
    d\Leb^{n},
\end{equation*}
for any distinct $i_1, \ldots, i_n \in \Nats$.
Notice that the value above does not depend on $\{i_1,...,i_n\}$, but only on the structure of $G$. 
\end{lemma}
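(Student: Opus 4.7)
The plan is to compute the probability directly from the sampling procedure defining $\mathbb{G}(\mathbb{N}, W)$ by conditioning first on the i.i.d.\ latent sequence $(x_i)_{i \in \omega}$ and then integrating. Since the theory of graphs guarantees that $R$ is symmetric and irreflexive (and $G$ is an actual graph on $[n]$), the conjunction defining $\varphi_G(i_1,\ldots,i_n)$ may be rewritten, after removing redundant and trivially true conjuncts, as a conjunction of events indexed by the unordered pairs $\{\ell,j\} \subseteq [n]$ with $\ell < j$: for each such pair one requires $R(i_\ell, i_j)$ if $G \models R(\ell,j)$ and $\neg R(i_\ell, i_j)$ otherwise.

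First I would invoke the defining rule of the sampling procedure: for each unordered pair $\{a,b\}$ of distinct elements of $\mathbb{N}$, the event ``$R(a,b)$ holds in $\mathbb{G}(\mathbb{N},W)$'' coincides with ``$\gamma_{\min(a,b),\max(a,b)} < W(x_a,x_b)$''. Because the $\gamma$'s are mutually independent, $\Leb$-distributed, and independent of the $x$'s, the following two observations hold after conditioning on a realization of $(x_i)$: the events associated to distinct unordered pairs are conditionally independent, and the conditional probability of $R(a,b)$ is exactly $W(x_a,x_b)$ while that of $\neg R(a,b)$ is $1-W(x_a,x_b)$. Combining these with the reduction in the previous paragraph gives
\begin{equation*}
\Pr\bigl(\varphi_G(i_1,\ldots,i_n) \,\bigm|\, (x_i)_{i \in \omega}\bigr) \;=\; \prod_{1 \le \ell < j \le n} W^{G(\ell,j)}(x_{i_\ell}, x_{i_j}).
\end{equation*}

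Taking expectation over $(x_i)_{i \in \omega}$ (which are i.i.d.\ with law $\Leb$) and applying Fubini, I would then observe that the integrand depends only on the finite subtuple $(x_{i_1},\ldots,x_{i_n})$, so the remaining coordinates integrate to $1$. Renaming $t_\ell \defas x_{i_\ell}$ yields
\begin{equation*}
\mathbb{G}(\mathbb{N},W)(\llbracket \varphi_G(i_1,\ldots,i_n)\rrbracket) \;=\; \int_{[0,1]^n} \prod_{1 \le \ell < j \le n} W^{G(\ell,j)}(t_\ell, t_j) \, d\Leb^n,
\end{equation*}
which is the claimed identity. The right-hand side manifestly depends only on $G$ and not on the particular choice of $i_1,\ldots,i_n$, which is an immediate manifestation of the $\Sym(\mathbb{N})$-invariance of $\mathbb{G}(\mathbb{N},W)$. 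There is no real ``hard part'' here: the proof is routine once one sets up the conditional independence carefully, and the only mild bookkeeping concern is the passage from the conjunction in $\varphi_G$, which ranges over all ordered pairs $(i,j)\in[n]^2$, to the product over unordered pairs $\ell<j$ appearing in the statement; this collapse is exactly where the symmetry and irreflexivity of $R$ (built into $T$) is used.
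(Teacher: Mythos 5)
Your proof is correct. The paper does not give an argument of its own here but simply cites the beginning of Chapter 7 of Lov\'asz's book, and the computation you carry out --- reduce the conjunction in $\varphi_G$ to unordered pairs via symmetry and irreflexivity of $R$, condition on the latent sequence $(x_i)$, use independence of the array $(\gamma_{i,j})$ to factor the conditional probability as $\prod_{\ell<j} W^{G(\ell,j)}(x_{i_\ell},x_{i_j})$, then integrate out the unused coordinates by Fubini --- is exactly the standard argument that reference uses.
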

\begin{proof} See the beginning of Chapter 7 of \cite{lovasz2012large}.  
\end{proof}

We recall that every ergodic $\Sym(\mathbb{N})$-invariant measure on the space of countable graphs arises via graphon sampling. Thus, by associating Keisler measures with graphons in a coherent way, we will prove that every ergodic $\Sym(\mathbb{N})$-invariant measure on the space of countable graphs arises also via generic sampling. 

\begin{lemma}\label{lemma:rep}
Suppose $\nu$ is an ergodic
$\Sym({\Nats})$-invariant measure on $\str_{\mathcal{L}}$ concentrated on the collection of graphs. Then there is a Borel graphon $W_\nu$ such that $\mathbb{G}(\Nats, W_\nu) = \nu$. 
\end{lemma}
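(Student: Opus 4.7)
The plan is to reduce this to the classical Aldous--Hoover--Kallenberg representation theorem for jointly exchangeable $\{0,1\}$-valued arrays. Viewing $\nu$ as the distribution of a random symmetric $\{0,1\}$-array $(R_{ij})_{i,j\in\mathbb{N},\, i\neq j}$, the $\Sym(\mathbb{N})$-invariance of $\nu$ via the logic action is exactly joint exchangeability of the array, and ergodicity of $\nu$ is ergodicity of the array under the diagonal action. The Aldous--Hoover theorem (see \cite[Theorem 7.22]{kallenberg2005probabilistic} or \cite[Chapter 11]{lovasz2012large}) then furnishes a measurable function $f\colon[0,1]^{4}\to\{0,1\}$, symmetric in its second and third coordinates, together with i.i.d.\ $\mathrm{Uniform}[0,1]$ variables $\alpha$, $(\xi_i)_{i\in\mathbb{N}}$, and $(\zeta_{\{i,j\}})_{\{i,j\}\in[\mathbb{N}]^{2}}$, such that $(R_{ij})$ is equidistributed with $(f(\alpha,\xi_i,\xi_j,\zeta_{\{i,j\}}))$.

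Since $\nu$ is ergodic, the ergodic decomposition of jointly exchangeable arrays (given by conditioning on the tail $\sigma$-algebra generated by $\alpha$) degenerates, so $\alpha$ can be taken almost surely constant. This yields a symmetric measurable $f\colon[0,1]^{3}\to\{0,1\}$ such that $R_{ij}\stackrel{d}{=}f(\xi_i,\xi_j,\zeta_{\{i,j\}})$. Define
\[
W_\nu(x,y)\defas \int_{0}^{1} f(x,y,z)\,d\Leb(z).
\]
This is a symmetric $[0,1]$-valued measurable function; by replacing it with a pointwise Borel representative on its domain (which is possible because any Lebesgue measurable function agrees with a Borel function outside a Borel null set, and modifications on null sets do not affect any of the integrals used below), we obtain a Borel graphon.

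To verify $\mathbb{G}(\mathbb{N},W_\nu)=\nu$, it suffices by a $\pi$-$\lambda$ argument to check agreement on the generating algebra of cylinders $\llbracket \varphi_G(i_1,\ldots,i_n)\rrbracket$. On the one hand, conditioning on $(\xi_{i_1},\ldots,\xi_{i_n})=(t_1,\ldots,t_n)$ and using that the $\zeta_{\{i_\ell,i_j\}}$ are independent, we compute
\[
\nu(\llbracket\varphi_G(i_1,\ldots,i_n)\rrbracket)=\int_{[0,1]^n}\prod_{1\leq \ell < j \leq n} W_\nu^{G(\ell,j)}(t_\ell,t_j)\,d\Leb^{n},
\]
and on the other hand, by the lemma stated immediately before, the same expression equals $\mathbb{G}(\mathbb{N},W_\nu)(\llbracket\varphi_G(i_1,\ldots,i_n)\rrbracket)$.

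The main obstacle is not the existence of a symmetric measurable $W_\nu$, which is essentially Aldous--Hoover, but the passage from a measurable function to a genuinely Borel one; this is a routine measure-theoretic step but must be handled carefully so that the symmetry $W_\nu(x,y)=W_\nu(y,x)$ holds pointwise rather than only almost everywhere.
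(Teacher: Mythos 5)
Your proof is correct and arrives at the same conclusion, but it takes a more self-contained route than the paper does for the graph case. The paper's own proof of this lemma is a two-sentence citation: it invokes \cite[Theorem 11.52]{lovasz2012large} to produce a Lebesgue measurable graphon $W'_\nu$ with $\mathbb{G}(\Nats, W'_\nu) = \nu$, then cites \cite[Exercise 13.11]{lovasz2012large} for the passage from Lebesgue measurable to Borel. You instead unpack the black box: you go through the Aldous--Hoover--Kallenberg representation for jointly exchangeable $\{0,1\}$-arrays, use ergodicity (conditioning on the tail $\sigma$-algebra generated by $\alpha$) to discard the $\emptyset$-indexed randomness, and then integrate out the $\{i,j\}$-indexed noise $\zeta_{\{i,j\}}$ to extract the graphon. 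Interestingly, this is precisely the route the paper itself takes for the hypergraph version of the lemma (\cref{lemma:hyper-rep}), where it cites \cite[Lemmas 7.22 and 7.35]{kallenberg2005probabilistic} and then the ``cryptomorphism'' between hypergraphons and Aldous--Hoover--Kallenberg representations. So your argument is, in effect, the specialization of the paper's hypergraph proof to the $k=2$ case, rather than a reproduction of its graph proof. Both approaches are valid; the citation-based route is shorter, while yours makes explicit the probabilistic mechanism (in particular, that conditioning on the $\xi_i$ makes the $R_{ij}$ independent Bernoullis with parameter $W_\nu(\xi_i,\xi_j)$, matching the graphon sampling procedure).

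One small loose end: you correctly flag that the Borel modification must preserve pointwise (not merely a.e.) symmetry, but you do not say how. The standard fix is to take any Borel representative $\tilde W$ of $W_\nu$, truncate it to $[0,1]$, and then symmetrize pointwise by setting $W_\nu(x,y) \defas \tfrac{1}{2}\bigl(\tilde W(x,y)+\tilde W(y,x)\bigr)$; since the original $W_\nu$ is already pointwise symmetric (because $f$ is symmetric in its middle arguments), $\tilde W$ is a.e.~symmetric, and the symmetrization changes it only on a nullset. With that one line added, your proof is complete.
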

\begin{proof}
By \cite[Theorem 11.52]{lovasz2012large},
there exists a Lebesgue measurable graphon $W'_{\nu}$ such that $\mathbb{G}(\mathbb{N},W'_{\nu}) = \nu$. By standard measure theory, there is a Borel graphon $W_{\nu}$ that differs from $W_{\nu}'$ on a nullset (e.g., see \cite[Exercise 13.11]{lovasz2012large}); this difference on a nullset does not affect the invariant measure, i.e., $\mathbb{G}(\mathbb{N},W_{\nu}) = \mathbb{G}(\mathbb{N},W'_{\nu}) = \nu$.
\end{proof}

\begin{remark}
\label{WF}
Let $W$ be a Borel graphon. Let $n \ge $1, and again 
suppose $G$ is a graph on vertex set $\{1, \ldots, n\}$.
Since $S_{x}^{*}(M)$ and $[0,1]$ are Borel-isomorphic, there exists an isomorphism of measure spaces $F\colon (S_{x}^{*}(M),\lambda) \to ([0,1],\Leb)$ (see, e.g., \cite[433X (f)]{fremlin2000measure}). 
Hence
\begin{equation*}
\mathbb{G}(\mathbb{N}, W)(\llbracket \varphi_{G}(i_1,...,i_n)\rrbracket)
=
    \int_{(t_1,..,t_n) \in S^{*}_{x}(M)^{n}}  
    \prod_{1 \leq \ell < j \leq n} 
    W^{G(\ell, j)}(F(t_\ell),F(t_j)) \,
    d\lambda^n,
\end{equation*}
for any distinct $i_1, \ldots, i_n \in \Nats$.
\end{remark}
Throughout this subsection, we will fix an isomorphism $F$ as in \cref{WF}. We will often write $W$ to mean the function $W_F$ defined by
$W_{F}(t_\ell,t_j) = W(F(t_\ell),F(t_j))$ for $t_\ell, t_j \in S^*_{x}(M)$. 

Given a Borel graphon $W$, we may now associate to it a Keisler measure $\mu_W$ on $\mathcal{L}_{x}(\mathcal{U})$, where $\mathcal{U}$ is a monster model of the theory of the Rado graph. We remark that this association implicitly depends on the choice of isomorphism $F$ from \cref{WF}. 

First, we introduce some terminology.
We say that $\varphi(x) \in \mathcal{L}_{x}(\mathcal{U})$ is \emph{basic} if it is of the form
\begin{equation*}
    \varphi(x) = \bigwedge_{a \in A}  R(x,a) \wedge  \bigwedge_{b \in B} \neg R(x,b) \wedge \bigwedge_{c \in C} R(x,c)\wedge \bigwedge_{d \in D} \neg  R(x,d) \wedge E(x),
\end{equation*}
where $A$ and $B$ are finite disjoint subsets of $M$, $C$ and $D$ are finite disjoint subsets of $\mathcal{U}\backslash M$, and $E(x)$ is a formula in the language of equality with parameters from $\mathcal{U}$. We say that a basic formula $\varphi(x)$ is \emph{trivial} if $E(x) \vdash x \in F$ for some finite set $F \subseteq \mathcal{U}$, and \emph{non-trivial} otherwise. By quantifier elimination, every formula in $\mathcal{L}_{x}(\mathcal{U})$ is equivalent to a finite disjoint union of basic formulas, and thus it suffices to define $\mu_{W}$ on basic formulas. 

\begin{definition}\label{def:measure} Suppose that $W$ is a Borel graphon. We define a measure $\mu_W$ on $\mathcal{L}_{x}(\mathcal{U})$ as follows: Let $\varphi(x)$ be a basic formula in $\mathcal{L}_{x}(\mathcal{U})$. If $\varphi(x)$ is trivial, set $\mu_{W}(\varphi(x)) = 0$. If $\varphi(x)$ is non-trivial, let 
\begin{equation*}
\mu_{W}(\varphi(x)) = \int_{p \in S_{x}^{*}(M)} \mathbf{1}_{\psi_{A}(x) \wedge \psi_{B}(x)} \prod_{c \in C} W(p,\tp(c/M)) \prod_{d \in D} (1 - W(p,\tp(d/M)))d\lambda. 
\end{equation*}
where $\psi_{A}(x) := \bigwedge_{a \in A} R(x,a)$ and $\psi_{B}(x) := \bigwedge_{b \in B} \neg R(x,b)$.

We will sometimes write the integrand as $f_{\varphi}(\cdot)$, so that $\mu_{W}(\varphi(x)) = \int_{p \in S_{x}^{*}(M)} f_{\varphi}(p) d\lambda$.  
\end{definition}

\begin{proposition}\label{prop:inv} The function $\mu_{W}$ in Definition \ref{def:measure} is an $M$-invariant Keisler measure.
Moreover, $\mu_{W}$ does not concentrate on points.
\end{proposition}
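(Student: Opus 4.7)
The plan is to show that $\mu_W$ extends uniquely from basic formulas to a finitely additive probability measure on $\mathcal{L}_x(\mathcal{U})$, and then verify $M$-invariance and the no-point-mass property. By quantifier elimination in the theory of the Rado graph, every formula $\varphi(x) \in \mathcal{L}_x(\mathcal{U})$ is equivalent to a finite disjoint union of basic formulas, so this suffices.

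The heart of the argument is checking well-definedness under refinement: any two disjoint decompositions share a common refinement, so it is enough to show that splitting a non-trivial basic formula $\varphi$ by a fresh parameter $c \in \mathcal{U}$ preserves the total $\mu_W$-value. Concretely, $\varphi$ is the disjoint union of $\varphi \wedge R(x,c)$, $\varphi \wedge \neg R(x,c)$, and $\varphi \wedge x = c$, where the final conjunct is trivial and so has $\mu_W$-measure $0$. If $c \in M$, then $c$ is added to the $M$-parameter sets $A$ or $B$, and the identity $\mathbf{1}_{R(x,c)} + \mathbf{1}_{\neg R(x,c)} \equiv 1$ on $S_x^*(M)$ (which holds pointwise since every type in $S_x^*(M)$ decides $R(x,c)$) combined with linearity of integration yields additivity. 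If $c \in \mathcal{U}\setminus M$, then $c$ is added to $C$ or $D$, and the pointwise identity $W(p, \tp(c/M)) + (1 - W(p, \tp(c/M))) = 1$ plays the analogous role. Splits along $x = c$ versus $x \neq c$ cause no issue since $\lambda$ concentrates on $S_x^*(M)$ and the non-trivial component absorbs any finite exclusion from $\mathcal{U}$ without changing the integral.

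Having established well-definedness, I would verify that $\mu_W(x = x) = \lambda(S_x^*(M)) = 1$ directly from the definition, so $\mu_W$ is a probability measure, and finite additivity then follows by taking common basic refinements. For $M$-invariance, observe that the integrand $f_\varphi$ depends on parameters from $C$ and $D$ only through $\tp(c/M)$ and $\tp(d/M)$; hence $\bar a \equiv_M \bar b$ implies $\mu_W(\varphi(x, \bar a)) = \mu_W(\varphi(x, \bar b))$ on the nose. The no-point-mass property is immediate since for every $a \in \mathcal{U}$ the formula $x = a$ is a trivial basic formula, hence has $\mu_W$-measure $0$.

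The main obstacle is the well-definedness step, which requires careful bookkeeping across the cases of where a new parameter $c$ lies (in $M$ versus outside, and whether it already appears in the basic data of $\varphi$). One must also track that Borel measurability of the integrand is preserved under these refinements; this reduces to the observation that $p \mapsto W(p, \tp(c/M))$ is a Borel function on $S_x^*(M)$, which is immediate from the Borel measurability of $W$ and the fact that each $\tp(c/M)$ corresponds to a fixed point of $[0,1]$ under the measure isomorphism $F$ fixed in \cref{WF}.
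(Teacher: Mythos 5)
Your proposal is correct and takes essentially the same route as the paper: both reduce finite additivity to the single-literal split $\mu_W(\varphi) = \mu_W(\varphi \wedge R(x,e)) + \mu_W(\varphi \wedge \neg R(x,e))$, handling $e \in M$ via the pointwise indicator identity and $e \in \mathcal{U}\setminus M$ via $W(p,\tp(e/M)) + (1 - W(p,\tp(e/M))) = 1$ and linearity of the integral, with $M$-invariance and the no-point-mass property read off directly from the definition. Your common-refinement framing is a cosmetic variant of the paper's induction, and your extra remarks about the $x=c$ split, the normalization $\mu_W(x=x)=1$, and Borel measurability of $p \mapsto W(p,\tp(c/M))$ are all correct but implicit in the paper's treatment.
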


\begin{proof} It is clear that the quantity $\mu_{W}(\varphi(x))$ only depends on the types of the parameters in $\varphi(x)$ over $M$ and hence is $M$-invariant. Moreover, it is also clear that $\mu_{W}$ does not concentrate on points since the measure of any finite subset of our model is $0$. Thus it remains to show that $\mu$ is a Keisler measure.
By an induction argument, it suffices to show that if $\varphi(x)$ is a non-trivial basic formula and $e \in \mathcal{U}$ is a parameter, then 
\begin{equation*}
    \mu_{W}(\varphi(x)) = \mu_{W}(\varphi(x) \wedge R(x,e))  + \mu_{W}(\varphi(x) \wedge \neg R(x,e)). 
\end{equation*}
If $e \in M$, then this equality holds as $\mathbf{1}_{\psi_{A}(x) \wedge \psi_{B}(x)} = \mathbf{1}_{\psi_{A}(x) \wedge \psi_{B}(x) \wedge R(x, e)} + \mathbf{1}_{\psi_{A}(x) \wedge \psi_{B}(x) \wedge \neg R(x, e)}$. If $e \in \mathcal{U} \backslash M$, then we have
\begin{align*}
    \mu_{W}(\varphi(x) \wedge R(x,e)) & = \int_{p \in S_{x}^{*}(M)} f_{\varphi}(p) W(p,\tp(e/M))d\lambda
\intertext{and}
    \mu_{W}(\varphi(x) \wedge \neg R(x,e)) & = \int_{p \in S_{x}^{*}(M)} f_{\varphi}(p) (1 -W(p,\tp(e/M))d\lambda; 
\intertext{by linearity of integration, we have that}
    \mu_{W}(\varphi(x) \wedge R(x,e)) + \mu_{W}(\varphi(x) \wedge \neg R(x,e)) & = \int_{p \in S_{x}^{*}(M)} f_{\varphi}(p)d\lambda = \mu_{W}(\varphi(x)), 
\end{align*}
as required. 
\end{proof}

The following fact is a simplified version of \cite[Theorem 17.25]{kechris2012classical}. 

\begin{fact}\label{fact:kechris} Let $(X,\mathcal{S})$ be a measurable space, $Y$ a separable metrizable space, $\mu$ a Borel probability measure on $Y$, and $f\colon X \times Y \to [0,1]$ a bounded measurable map. Then the function
\begin{equation*}
    x \mapsto \int_{y \in Y} f(x,y) \, d\mu
\end{equation*}
is $\mathcal{S}$-measurable. 
\end{fact}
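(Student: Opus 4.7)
The plan is to apply the functional monotone class theorem. Let $\mathcal{H}$ denote the collection of bounded $\mathcal{S} \otimes \mathcal{B}(Y)$-measurable functions $f\colon X \times Y \to \mathbb{R}$ for which the parameterized integral $I(f)\colon x \mapsto \int_Y f(x,y)\,d\mu$ is $\mathcal{S}$-measurable. My goal is to show that $\mathcal{H}$ contains every bounded $\mathcal{S} \otimes \mathcal{B}(Y)$-measurable function, which is exactly the claim.

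First I would handle the base case of measurable rectangles. For any $A \in \mathcal{S}$ and any Borel $B \subseteq Y$, one has $I(\mathbf{1}_{A \times B})(x) = \mathbf{1}_A(x) \cdot \mu(B)$, which is manifestly $\mathcal{S}$-measurable, so $\mathcal{H}$ contains the indicator of every measurable rectangle. By linearity of integration, $\mathcal{H}$ is a vector space, and it trivially contains the constant function $1 = \mathbf{1}_{X \times Y}$. The collection of measurable rectangles forms a $\pi$-system generating $\mathcal{S} \otimes \mathcal{B}(Y)$.

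The main step is closure of $\mathcal{H}$ under bounded monotone limits. Suppose $(f_n) \subseteq \mathcal{H}$ with $0 \leq f_n \nearrow f$ pointwise and $f$ uniformly bounded. Because $\mu$ is a finite measure, the dominated convergence theorem (with the uniform bound as dominating function) gives $I(f_n)(x) \to I(f)(x)$ pointwise in $x$. A pointwise limit of a sequence of $\mathcal{S}$-measurable functions is $\mathcal{S}$-measurable, so $f \in \mathcal{H}$. The functional monotone class theorem then yields that $\mathcal{H}$ contains every bounded function measurable with respect to $\sigma(\text{rectangles}) = \mathcal{S} \otimes \mathcal{B}(Y)$, as desired.

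The main obstacle is essentially bookkeeping rather than substance: one must be careful to interpret ``measurable'' in the hypothesis as joint $\mathcal{S} \otimes \mathcal{B}(Y)$-measurability, which is the natural reading when $Y$ carries its Borel $\sigma$-algebra. The separable metrizable assumption on $Y$ guarantees that $\mathcal{B}(Y)$ is countably generated and that the product $\sigma$-algebra is well-behaved; the deeper descriptive-set-theoretic content of Kechris's full Theorem 17.25, involving analytic sets and Baire class hierarchies, is not required for this simplified version where $f$ is already assumed to be Borel (equivalently, product-measurable) rather than merely analytic.
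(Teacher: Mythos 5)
Your proof is correct. A small but worth-noting point: the paper does not actually prove this Fact — it is stated with a citation to \cite[Theorem 17.25]{kechris2012classical} and no argument is given. Your functional monotone class argument is a clean, self-contained, elementary proof of exactly this ``simplified version.'' Kechris's Theorem 17.25 is a stronger statement about $\Sigma^1_1$ sets (it asserts $\sigma(\Sigma^1_1)$-measurability of $x \mapsto \mu(A_x)$ for analytic $A$, and then specializes to Borel $A$); your argument bypasses that descriptive-set-theoretic machinery entirely, which is exactly what you correctly flag in your last paragraph. All the ingredients check out: the rectangles $A \times B$ form a $\pi$-system generating $\mathcal{S} \otimes \mathcal{B}(Y)$, $I(\mathbf{1}_{A\times B}) = \mathbf{1}_A \cdot \mu(B)$ is plainly $\mathcal{S}$-measurable, $\mathcal{H}$ is a vector space containing $1$, and closure under increasing bounded limits follows from monotone (or dominated) convergence since $\mu$ is a probability measure, giving a pointwise limit of $\mathcal{S}$-measurable functions. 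You are also right that the separable-metrizable hypothesis on $Y$ is doing work only in ensuring the Borel $\sigma$-algebra on the product matches the product of Borel $\sigma$-algebras in the intended applications (where both factors are Polish type spaces); for the abstract statement with $(X,\mathcal{S})$ an arbitrary measurable space and $f$ interpreted as $\mathcal{S}\otimes\mathcal{B}(Y)$-measurable, the monotone class argument requires no topological hypothesis on $Y$ at all.
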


\begin{proposition}\label{prop:Borel} Suppose that $W$ is a Borel graphon. Then $\mu_{W}$ is Borel-definable over $M$. 
\end{proposition}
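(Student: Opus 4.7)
My plan is to reduce via quantifier elimination to the case of a conjunction of literals, express $\mu_W(\varphi(x, \bar b_q))$ as an integral over $S^*_x(M)$ of a function jointly Borel on $S^*_x(M) \times S_{\bar y}(M)$, and then invoke \cref{fact:kechris}.

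First I would use finite additivity of $\mu_W$ (\cref{prop:inv}) together with quantifier elimination for $T$ to reduce to the case in which $\varphi(x, y_1, \ldots, y_n)$ is a conjunction of literals. Since $\mu_W$ does not concentrate on points, literals of the form $x \ne y_i$ may be dropped, and if a literal $x = y_i$ appears then $\varphi(x, \bar b_q)$ is trivial and $F^\varphi_{\mu_W, M} \equiv 0$. I may therefore assume
\[
\varphi(x, \bar y) = \chi_0(\bar y) \wedge \bigwedge_{i \in I_+} R(x, y_i) \wedge \bigwedge_{i \in I_-} \neg R(x, y_i),
\]
where $\chi_0$ involves only $\bar y$. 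Since $\chi_0(\bar b_q)$ holds iff $\chi_0(\bar y) \in q$, the factor $\mathbf{1}_{\chi_0 \in q}$ is a clopen-measurable function of $q \in S_{\bar y}(M)$ and pulls out of the computation.

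Next I would partition $S_{\bar y}(M)$ into finitely many Borel pieces according to: (i) the equivalence relation $\sim_q$ on $[n]$ given by $i \sim_q j \iff (y_i = y_j) \in q$, and (ii) the subset $S \subseteq [n]/\sim_q$ of classes whose representatives $b_i$ lie in $M$ (an $F_\sigma$ condition of the form $\bigvee_{m \in M}(y_i = m) \in q$). If some $\sim_q$-class contains indices with conflicting signs in $I_+, I_-$, then $\varphi(x, \bar b_q)$ is inconsistent and has measure zero; otherwise \cref{def:measure} yields
\begin{equation*}
\mu_W(\varphi(x, \bar b_q)) = \mathbf{1}_{\chi_0 \in q} \int_{S^*_x(M)} h(p, q) \, d\lambda,
\end{equation*}
with $h(p, q)$ a product of one factor per $\sim_q$-class $[i]$: namely $\mathbf{1}_{R^{\epsilon}(x, m) \in p}$ if $[i] \in S$ with $b_i = m \in M$ (the value of $m$ determined by $q$), or $W^{\epsilon}(F(p), F(q|_{y_i}))$ otherwise, where $F$ is the isomorphism of \cref{WF}, $\epsilon \in \{+,-\}$ records the literal's sign, $R^+ = R$, $R^- = \neg R$, $W^+ := W$, and $W^- := 1 - W$.

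The final step is to check that each factor of $h$ is jointly Borel on $S^*_x(M) \times S_{\bar y}(M)$. The ``$M$-type'' factor can be written as the countable Borel sum $\sum_{m \in M} \mathbf{1}_{(y_i = m) \in q}\cdot \mathbf{1}_{R^{\epsilon}(x, m) \in p}$, while the ``graphon'' factor is the composition of the Borel function $W$ with the continuous restriction $q \mapsto q|_{y_i}$ and the Borel isomorphism $F$. Hence $h$ is jointly Borel, and \cref{fact:kechris}, applied with $X = S_{\bar y}(M)$, $Y = S^*_x(M)$, $\mu = \lambda$, and $f = h$, gives that $q \mapsto \int h(p, q)\, d\lambda$ is Borel. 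I expect the main obstacle to be not any deep argument but rather the bookkeeping needed to organize the finite case split cleanly---in particular, correctly expressing the integrand as a product indexed by $\sim_q$-classes rather than by coordinate indices, so that repeated parameters are not double-counted---after which the Borel-measurability conclusion follows mechanically from \cref{fact:kechris}.
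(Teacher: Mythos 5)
Your proof is correct and follows essentially the same strategy as the paper's: reduce by quantifier elimination to a conjunction of $R$-literals, partition $S_{\bar y}(M)$ into Borel pieces according to which parameters lie in $M$, express $F^\varphi_{\mu_W,M}(q)$ as $\int h(p,q)\,d\lambda$ for a jointly Borel $h$ built from indicator factors (for $M$-parameters) and graphon factors (for non-$M$-parameters), and conclude via \cref{fact:kechris}. The one genuine refinement in your write-up is the extra partition by the equality pattern $\sim_q$ on the indices, which prevents double-counting when a type $q$ assigns the same value to two coordinates; the paper's integrand $f_A$ is written as a product over indices and so is not literally equal to $F^\theta_{\mu_W}(q)$ on the Borel (but, for the intended application, null) set where $\pi_i(q)=\pi_j(q)$ for distinct $i,j\notin A$ — your $\sim_q$ bookkeeping closes this small gap cleanly.
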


\begin{proof} By Proposition \ref{prop:inv}, $\mu_{W}$ is $M$-invariant. It suffices to consider formulas of the form $\theta(x,\bar{y}) := \bigwedge_{i=1}^{n} R(x,y_i) \wedge \bigwedge_{i=n+1}^{m} \neg R(x,y_j) \wedge \tau(\bar{y})$ where $1 \leq n \leq m$ and $\tau(\bar{y})$ is an $\mathcal{L}_{\bar{y}}$-formula, and show that the maps $F_{\mu}^{\theta}\colon S_{\bar{y}}(M) \to [0,1]$ are Borel. The argument can easily be extended to the case when $\theta(x,\bar{y})$ contains instances of equality and inequality. For each $i \leq m$, let $\pi_{i} \colon S_{\bar{y}}(M) \to S_{y_{i}}(M)$ be the natural projection map. For each $A \subseteq [m]$, we consider the Borel set 
\begin{equation*} E_{A} := \bigcap_{k \in A} \pi_{k}^{-1}(M) \cap \bigcap_{k \in [m]\backslash A} \pi_{k}^{-1}(S_{y_k}(M)\backslash M). 
\end{equation*}
Note that the set $\{E_{A}\st A \subseteq [m] \}$ forms a partition of $S_{\bar{y}}(M)$. For each $A \subseteq [m]$, we consider the function $f_{A}\colon S_{x}^{*}(M) \times E_{A} \to [0,1]$ given by
\begin{equation*}
    f_{A}(p,q) := \prod_{\substack{i \leq n\\ i \in A}}\mathbf{1}_{R(x,\pi_{i}(q))}(p) \prod_{\substack{n < i \leq m\\ i \in A}}\mathbf{1}_{\neg R(x,\pi_i(q))}(p)\prod_{\substack{i \leq n \\ i \not \in A}}W(p,\pi_{i}(q))\prod_{\substack{n < i \leq m\\ i \not \in A}}(1-W(p,\pi_{i}(q))).
\end{equation*}
Notice that the map $h: S_{x}^{*}(M) \times S_{\bar{y}}(M) \to [0,1]$ given by  
\begin{equation*}
    h(p,q) := \left( \sum_{A \subseteq [m]} \mathbf{1}_{E_{A}}(q) f_{A}(p,q) \right) \mathbf{1}_{\tau}(q)
\end{equation*}
is Borel. By Fact \ref{fact:kechris}, we conclude that the map 
\begin{equation*}
    q \mapsto \int_{p \in S_{x}^{*}(M)} h(p,q) d\lambda
\end{equation*}
is also Borel. Since $F_{\mu_{W}}^{\theta}(q) = \int_{p \in S_{x}^{*}(M)} h(p,q) d\lambda$, this concludes the proof. 
\end{proof}

\begin{proposition}\label{prop:adequate} Let $W$ be a Borel graphon. Then $\mu_{W}$ is $M$-adequate. Moreover, if $W_1,W_2,W_3$ are Borel graphons, then 
\begin{equation*}
    (\mu_{W_1} \otimes \mu_{W_2}) \otimes \mu_{W_3} = \mu_{W_1} \otimes( \mu_{W_2} \otimes \mu_{W_3}). 
\end{equation*}
\end{proposition}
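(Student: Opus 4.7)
The plan is to prove, by induction on $n$, an explicit integral formula for every iterated Morley product $\mu_{W_1}(x_1) \otimes \cdots \otimes \mu_{W_n}(x_n)$ (in any parenthesization) of Borel graphon Keisler measures, from which both Borel-definability of all finite iterates and (self-)associativity can be read off simultaneously. Concretely, for a basic formula $\varphi(x_1,\ldots,x_n) \in \mathcal{L}_{\bar{x}}(\mathcal{U})$ whose data consists of a graph $G$ on $[n]$ together with finite disjoint sets $A_i, B_i \subseteq M$ and $C_i, D_i \subseteq \mathcal{U} \setminus M$ recording the edges/non-edges of $x_i$ with parameters, I claim the value on $\varphi$ equals
\begin{equation*}
\int_{(S_{x}^{*}(M))^n} \prod_{i=1}^n \biggl[ \mathbf{1}_{\psi_{A_i}\wedge \psi_{B_i}}(p_i) \prod_{c \in C_i} W_i(p_i,\tp(c/M)) \prod_{d \in D_i}\bigl(1-W_i(p_i,\tp(d/M))\bigr) \biggr] \prod_{1 \le i < j \le n} W_i^{G(i,j)}(p_i,p_j)\, d\lambda^n.
\end{equation*}
The $n=1$ base case is \cref{def:measure}.

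For the inductive step, I would peel off an outermost factor $\mu_{W_k}$, apply \cref{def:measure} to its single-variable evaluation---treating the realizations of the other $x_j$ and the original $\mathcal{U}\setminus M$-parameters of $\varphi$ uniformly as parameters from $\mathcal{U}\setminus M$---and combine with the inductive hypothesis for the remaining $(n-1)$-fold product. Two observations drive this: (a) $\mu_{W_k}$ restricted to $\mathcal{L}_x(M)$ coincides with $\lambda$ (immediate from \cref{def:measure} when $C=D=\emptyset$), so outer integration against $\mu_{W_k}$ of any Borel function of $\tp(x_k/M)$ reduces to integration against $\lambda$ on $S_{x}^{*}(M)$; and (b) $\mu_{W_k}$-integration converts each indicator $\mathbf{1}_{R(x_k, b_j)}$ into a factor $W_k(p_k, \tp(b_j/M))$ as in \cref{def:measure}, which is precisely what turns the ``constant'' subformulas $R^{\pm}(b_i, b_j)$ arising at intermediate steps into the $W_i^{G(i,j)}(p_i, p_j)$-factors of the final formula. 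Fubini--Tonelli applies throughout since the integrand is Borel, non-negative, and bounded by $1$. Borel-definability of each iterate follows along the way from \cref{fact:kechris}, by essentially the same argument used in \cref{prop:Borel}.

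Since the final formula depends only on the data of $\varphi$ and not on the parenthesization, and since Fubini--Tonelli permits any order of integration against $\lambda^n$, we obtain at once the general associativity of the moreover clause, and its $W_1=W_2=W_3=W$ specialization, which is self-associativity, i.e., clause (3) of \cref{adex}. Clause (2) is the Borel-definability at each stage, and clause (1) is \cref{prop:Borel}. The main technical obstacle is the bookkeeping in observation (b): one must verify carefully that each ``constant'' sub-indicator $R^{\pm}(b_i, b_j)$ introduced at an early peeling step is correctly absorbed into the correct $W_i^{G(i,j)}(p_i, p_j)$-factor through a subsequent outer integration. I would first verify this transition directly in the $n=2$ case, where it is a one-line calculation using \cref{def:measure}, and then deploy the same pattern for the general induction.
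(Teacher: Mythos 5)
Your approach is genuinely different from the paper's. The paper's proof is a one-line citation: \cref{prop:Borel} shows $\mu_W$ is Borel-definable over the countable model $M$, and then \cite[Theorem 2.13]{CGH} provides the general ``tameness'' facts that (i) the Morley product of measures Borel-definable over countable models (in a countable language) is again Borel-definable, and (ii) any triple of such measures is associative. That immediately gives adequacy and the moreover clause. Your proposal instead derives both from an explicit closed-form integral formula for the iterated product, established by induction on $n$. The formula you write (with $W_i^{G(i,j)}(p_i,p_j)$ indexed by the \emph{smaller} index $i<j$) is correct: that index convention is consistent across parenthesizations because, in any parenthesization of $\mu_{W_1}\otimes\cdots\otimes\mu_{W_n}$, the factor $\mu_{W_i}$ is always the left (inner-evaluation) factor relative to $\mu_{W_j}$ when $i<j$. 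Your plan essentially anticipates \cref{lemma:integral} of the paper, specialized to the basic-formula case and generalized to distinct graphons; this is a legitimate and arguably more illuminating route, at the cost of redoing work that a citation shortcuts.

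There is, however, a real gap in the inductive step as you describe it. You propose to ``peel off an outermost factor $\mu_{W_k}$ and apply \cref{def:measure} to its single-variable evaluation.'' But after the first peeling, the integrand to be integrated against the next measure is no longer a linear combination of formula indicators: it is a genuine Borel function of the next variable's type (because it carries $W$-factors such as $W_1^{G(1,2)}(p_1,\pi_M(q))$ from the previous stage). \cref{def:measure} defines $\mu_W$ only on definable sets; to integrate a Borel function $f$ against $\mu_W$ you need the identity proved in \cref{lemma:key}, namely the Radon--Nikodym-type factorization of $\mu_W|_{M\bar c}$ through $\lambda \times$ (point masses on $S^*_y(\bar c)$ weighted by $W$-factors). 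Your observations (a) and (b) are precisely the two ingredients of that lemma, but they cannot simply be combined ``on formulas''; the extension from indicators to continuous functions (by uniform approximation) and then to general Borel functions (by transfinite induction up the Baire hierarchy with dominated convergence) is exactly what the paper's proof of \cref{lemma:key} supplies. In particular, the claim that the $n=2$ transition ``is a one-line calculation using \cref{def:measure}'' is an overstatement: even for $n=2$ one must integrate the Borel function $F_{\mu_{W_1}}^\varphi$ against $\mu_{W_2}$, and this already requires the Baire-class argument. If you first state and prove the analogue of \cref{lemma:key} for your $\mu_{W_k}$, the rest of your induction goes through, Fubini applies as you say, and the formula's manifest parenthesization-independence yields associativity while \cref{fact:kechris} yields Borel-definability of each iterate as a function of the parameter type.
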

\begin{proof} This follows directly from the fact that measures of the form $\mu_{W}$, where $W$ is a Borel graphon, are Borel-definable over countable models (Proposition \ref{prop:Borel}), and from the fact that measures that are Borel-definable over countable models are relatively tame. Specifically, by \cite[Theorem 2.13]{CGH} we see the following.
\begin{enumerate}
    \item The Morley product of Borel-definable measures (over countable models) is again Borel-definable. By induction, iterated products of measures of the form $\mu_{W}$ are also Borel-definable. 
    \item Triples of Keisler measures that are each Borel-definable over a countable model are associative with respect to the Morley product. Hence powers of $\mu_{W}$ are associative, and so $\mu_{W}$ is $M$-adequate. This fact also implies the \emph{moreover} portion of the statement.   \qedhere 
\end{enumerate} 
\end{proof}

\begin{definition} Let $\bar{c} = c_1,...,c_n$ be a tuple of parameters from $\mathcal{U} \backslash M$. The space $S^\ast_y(M\bar{c})$ can be canonically identified with the space $S^\ast_y(M) \times S^\ast_y(\bar{c})$ via the projection maps $\pi_M \colon S^\ast_y(M\bar{c}) \to S^\ast_y(M)$ and $\pi_{\bar{c}} \colon S^\ast_y(M\bar{c}) \to S^\ast_y(\bar{c})$. Define $\rho \colon  S^\ast_y(M) \times S^\ast_y(\bar{c}) \to S^\ast_y(M\bar{c})$ to be the map witnessing this identification. 
\end{definition}

\begin{lemma}\label{lemma:key} Let $\bar{c} = c_1,...,c_n$ be a tuple of parameters from $\mathcal{U}\backslash M$. Then for any Borel-measurable function $f \colon S^\ast_y(M\bar{c}) \to [0,1]$, we have 
\begin{equation*}
\int_{q \in S^\ast_y(M\bar{c})}f(q)  d\mu_W = \int_{r \in S^\ast_y(M)} \sum_{s \in S^\ast_y(\bar{c})}f(\rho(r,s)) \prod_{i \leq n} W^{s}(r, \tp(c_i/M))d\lambda, 
\end{equation*}
where $W^{s}(r,\tp(c_{i}/M)):=\begin{cases}
\begin{array}{cc}
W(r,\tp(c_{i}/M)) & \text{if \ }R(y,c_{i})\in s,\\
1-W(r,\tp(c_{i}/M)) & \text{if \ }\neg R(y,c_{i})\in s.
\end{array}
\end{cases}$
\end{lemma}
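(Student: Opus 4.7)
The plan is to verify the identity first on indicator functions of basic formulas over $M\bar c$, then extend to arbitrary Borel $f$ via standard measure-theoretic machinery. Both sides of the displayed equation make sense as integrals of bounded Borel functions: the left-hand side is $\int f \, d(\mu_W|_{M\bar c})$, while the right-hand side defines a set function $\nu$ on $S^\ast_y(M\bar c)$ by
\begin{equation*}
\nu(E) \defas \int_{r \in S^\ast_y(M)} \sum_{s \in S^\ast_y(\bar c)} \mathbf{1}_E(\rho(r,s)) \prod_{i \le n} W^s(r, \tp(c_i/M)) \, d\lambda.
\end{equation*}
Since $\bar c$ is finite, so is $S^\ast_y(\bar c)$, with $2^n$ elements parametrized by subsets $S \subseteq [n]$ recording which $R(y,c_i)$ hold in the type. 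The integrand is Borel in $r$ for each fixed $s$, using the Borelness of $W$ and the identification $F$ from \cref{WF} together with continuity of $\rho$, so the integral is well-defined. Observing that for each fixed $r$,
\begin{equation*}
\sum_{s \in S^\ast_y(\bar c)} \prod_{i \le n} W^s(r, \tp(c_i/M)) = \prod_{i \le n} \bigl(W(r, \tp(c_i/M)) + (1 - W(r, \tp(c_i/M)))\bigr) = 1,
\end{equation*}
we see that $\nu$ is in fact a probability measure on $S^\ast_y(M\bar c)$.

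Since non-trivial basic formulas over $M\bar c$ generate a $\pi$-system for the Borel $\sigma$-algebra on $S^\ast_y(M\bar c)$ (by quantifier elimination in $T$, with the equality conditions automatic on $S^\ast_y$), it suffices to verify $\nu([\varphi]) = \mu_W(\varphi)$ for every such $\varphi$. Write
\begin{equation*}
\varphi(y) = \psi_A(y) \wedge \psi_B(y) \wedge \bigwedge_{i \in C'} R(y, c_i) \wedge \bigwedge_{i \in D'} \neg R(y, c_i),
\end{equation*}
with $A, B \subseteq M$ disjoint and $C', D' \subseteq [n]$ disjoint. \cref{def:measure} gives
\begin{equation*}
\mu_W(\varphi) = \int_{r} \mathbf{1}_{\psi_A \wedge \psi_B}(r) \prod_{i \in C'} W(r, \tp(c_i/M)) \prod_{i \in D'} \bigl(1 - W(r, \tp(c_i/M))\bigr) \, d\lambda.
\end{equation*}
On the other side, the types $s$ with $\rho(r,s) \in [\varphi]$ correspond exactly to subsets $S \subseteq [n]$ with $C' \subseteq S$ and $S \cap D' = \emptyset$; summing $\prod_i W^s(r, \tp(c_i/M))$ over these $S$, the constrained indices in $C' \cup D'$ contribute the same fixed factors as above, while each unconstrained index $i \in [n]\setminus(C' \cup D')$ contributes $W + (1 - W) = 1$, reducing $\nu([\varphi])$ to precisely the above expression for $\mu_W(\varphi)$.

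Extending from basic indicators to bounded Borel $f$ then proceeds routinely: a monotone class argument lifts the measure-agreement from the generating $\pi$-system to all Borel sets, linearity gives the identity on simple functions, and monotone convergence finishes the case of general nonnegative bounded Borel $f$ with values in $[0,1]$. The principal obstacle is not conceptual but notational: one must carefully track which coordinates of $\bar c$ are constrained by a given basic formula and which are unconstrained, and verify that the unconstrained indices telescope to $1$ in the sum over $s$, as used in the key computation above.
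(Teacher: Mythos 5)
Your proof is correct and follows essentially the same strategy as the paper's: verify the identity on indicator functions of basic formulas over $M\bar c$ via the telescoping computation $\sum_{s} \prod_i W^s = 1$ on unconstrained indices, then extend to arbitrary Borel $f$ by standard machinery. Your explicit observation that the right-hand side defines a probability measure, followed by a $\pi$-system/monotone-class extension, is a slightly cleaner packaging than the paper's uniform-limit-then-Baire-hierarchy route, but the content is the same.
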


\begin{proof}
By an abuse of notation we will identify types in $S^\ast_y(\bar{c})$ with their isolating $R$-formulas, i.e., the corresponding conjunctions of instances of $R(y,c_i)$ and $\neg R(y,c_i)$). 

By linearity and the definition of $\mu_W(y)$, we have that the equality in the statement of the lemma holds for any function of the form $\mathbf{1}_{\alpha(y) \wedge s(y)}$, where $\alpha(y)$ is an $\mathcal{L}_{y}(M)$-formula and $s \in S^\ast_y(\bar{c})$. Any $\mathcal{L}_{y}(M\bar{c})$-formula $\alpha(y)$ can be written (up to $\mu_W(y)$-measure $0$) as $\bigvee_{s \in S^\ast_y(\bar{c})} \alpha_s(y) \wedge s(y)$, with each $\alpha_s(y)$ an $\mathcal{L}_{y}(M)$-formula. By linearity again, the equality holds for the indicator function of any such formula. Therefore, one can conclude that the equality holds for any function of the form $\sum_{i=1}^{n} r_i \mathbf{1}_{\alpha_i(y)}$, i.e., any linear combination of (indicator functions of the sets corresponding to) $\mathcal{L}_{y}(M\bar{c})$-formulas.

Now suppose that $f$ is a continuous function. Then $f$ is a uniform limit of functions of the form $\sum_{i=1}^{n} r_i \mathbf{1}_{\alpha_i(y)}$ where each $\alpha_i(y)$ is a $\mathcal{L}_{y}(M\bar{c})$-formula built only from instances of $R(x,y)$ and its negation. This is because these formulas form the basic clopens in $S_{y}^{*}(M\bar{c})$. In particular, $f$ is a pointwise limit, and so the equality holds by the dominated convergence theorem. 

Finally, since $S_{y}^{*}(M\bar{c})$ is a Polish space, every Borel function is a Baire class $\alpha$ for some $\alpha < \omega_1$. Hence the equality holds for arbitrary Borel measurable functions by a simple induction argument up the Baire class hierarchy via the dominated convergence theorem.
\end{proof}

Recall the definitions of the Morley product and of excellent measures (Definition \ref{cheat}(5) and Definition \ref{adex}, respectively). Given a Borel graphon $W$, we will prove that the measure $\mu_{W}$ is $M$-excellent by computing the Morley products $\mu_{W}(x) \otimes \mu_{W}(y)$ and $\mu_{W}(y) \otimes \mu_{W}(x)$ applied to an arbitrary formula. We will expand the Morley product in one direction and observe that by Fubini's Theorem and symmetry, the two products are equal. 

\begin{lemma}\label{lemma:excellent} Suppose that $W$ is a Borel graphon. Then $\mu_{W}$ is $M$-excellent.
\end{lemma}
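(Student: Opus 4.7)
The task reduces to showing self-commutativity of $\mu_W$, since adequacy has already been established in Proposition \ref{prop:adequate}. I plan to verify $(\mu_W(x) \otimes \mu_W(y))(\theta) = (\mu_W(y) \otimes \mu_W(x))(\theta)$ on basic formulas $\theta(x,y)$, which suffices by quantifier elimination in the theory of the Rado graph together with additivity of measures. Discarding trivial cases that force $x$ or $y$ into a finite set, such a basic formula factors as
\begin{equation*}
\theta(x,y) = R(x,y)^{\epsilon_0} \wedge \alpha(x) \wedge \beta(y) \wedge \bigwedge_i R(x,c_i)^{\epsilon_i^x} \wedge \bigwedge_j R(y,c_j)^{\epsilon_j^y},
\end{equation*}
where $\alpha(x)$ and $\beta(y)$ collect the atomic formulas involving parameters from $M$, and $\bar{c} \subseteq \mathcal{U}\setminus M$ is the finite tuple of external parameters occurring in $\theta$.

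The strategy is to unfold $(\mu_W(x) \otimes \mu_W(y))(\theta)$ into an explicit double integral over $S^*_x(M) \times S^*_y(M)$. First, by the definition of the Morley product, this quantity equals $\int_{S^*_y(M\bar{c})} F^\theta_{\mu_W(x)} \, d\mu_W(y)$. Second, apply Lemma \ref{lemma:key} to rewrite the outer integral as one over $r \in S^*_y(M)$ together with a sum over $s \in S^*_y(\bar{c})$ weighted by $\prod_j W^s(r, \tp(c_j/M))$; only the value of $s$ matching the prescribed $R(y,c_j)^{\epsilon_j^y}$ pattern contributes. Third, expand $F^\theta_{\mu_W(x)}$ via the definition of $\mu_W$ to produce an inner integral over $p \in S^*_x(M)$, so that the whole expression becomes the integral of
\begin{equation*}
\mathbf{1}_{\alpha}(p)\,\mathbf{1}_{\beta}(r)\cdot W^{\epsilon_0}(p,r)\cdot \prod_i W^{\epsilon_i^x}(p,\tp(c_i/M)) \cdot \prod_j W^{\epsilon_j^y}(r,\tp(c_j/M))
\end{equation*}
against $d\lambda(p)\,d\lambda(r)$.

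The key observation is that this integrand is manifestly symmetric under the exchange $p \leftrightarrow r$ thanks to the graphon identity $W(s,t) = W(t,s)$. Therefore the analogous computation for $(\mu_W(y) \otimes \mu_W(x))(\theta)$ produces the identical double integral after a single application of Fubini's theorem, which is licit since the integrand is bounded and Borel. Self-commutativity follows, and combined with Proposition \ref{prop:adequate} yields $M$-excellence. The main obstacle I anticipate is not conceptual but notational: carefully packaging each factor of the integrand according to whether it depends on $p$, on $r$, or on both through $W^{\epsilon_0}(p,r)$, and verifying Borel measurability of $F^\theta_{\mu_W(x)}$ as a function on $S^*_y(M\bar{c})$ so that Lemma \ref{lemma:key} actually applies; the latter follows from Proposition \ref{prop:Borel} together with Fact \ref{fact:kechris}.
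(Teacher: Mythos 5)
Your proposal is correct and follows essentially the same route as the paper's own proof: reduce to self-commutativity via Proposition \ref{prop:adequate}, restrict to conjunctions of literals by quantifier elimination, unfold the Morley product, apply Lemma \ref{lemma:key} to turn the outer $\mu_W$-integral into a $\lambda$-integral over $S^*_y(M)$ with a sum over $S^*_y(\bar{c})$ that collapses to the single compatible type, expand the inner integral from the definition of $\mu_W$, and then invoke symmetry of $W$ together with Fubini. The measurability concern you flag at the end is correctly resolved by Proposition \ref{prop:Borel} (Borel-definability of $\mu_W$), which is also what the paper relies on implicitly.
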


\begin{proof} By Proposition \ref{prop:adequate}, it suffices to prove that $\mu_{W}$ self-commutes. By quantifier elimination
(as the underlying structure is the Rado graph),
it suffices to show that $\mu_{W}(x) \otimes \mu_{W}(y)$ and $\mu_{W}(y) \otimes \mu_{W}(x)$ agree on arbitrary intersections of literals (since these form a $\pi$-system). One can easily check that the measure of any literal involving only instances of equality and inequality has the same value (in $\{0,1\}$) with respect to both Morley products, and so these formulas may be disregarded. Fix parameters $c_1,..,c_{n} \in \mathcal{U}\backslash M$ and consider any formula of the form
\begin{equation*}
    \varphi(x,y) := \tau(x) \wedge \gamma(y) \wedge \psi(x,y) \wedge \bigwedge_{i \leq  n} \eta_{i}(x,c_i) \wedge \bigwedge_{i \leq  n} \zeta_i(y,c_i),
\end{equation*}
where $\tau(x)$ and $\gamma(y)$ are $\mathcal{L}(M)$-formulas, and where $\psi$, the $\eta_i$ and the $\zeta_i$ are all either the formula $R$ or the formula $\neg R$. We need to describe the fiber functions $F_{\mu_{W}(x)}^{\varphi}\colon S_{y}(M\bar{c}) \to [0,1]$ and $F_{\mu_{W}(y)}^{\varphi^{*}}\colon S_{x}(M\bar{c}) \to [0,1]$.
Observe that Definition \ref{cheat}(5) of the Morley product essentially describes one side of Fubini's Theorem where we sum up the values of $\mu(\varphi(x,b))$ across all possible choices of $b$. However, the Morley product in general is not commutative, and so one needs to apply a little finesse.

Let $\mathbf{1}_{\eta}$ be the indicator function for $\bigwedge_{i \leq n} \eta_i(x,c_i)$ and $\mathbf{1}_{\zeta}$ be the indicator function for $\bigwedge_{i \leq n} \zeta_i(y,c_i)$. For $z \in \{x,y\}$, we have a natural projection map $\pi_{M,z}\colon S_{z}(M\bar{c}) \to S_{z}(M)$. We will write $\pi_M$ for $\pi_{M,z}$ when there is no cause for confusion. For $\xi \in \{\eta_i,\zeta_i,\psi\}$, we let 
\begin{equation*}
    W^{\xi}(p,q):=\begin{cases}
\begin{array}{cc}
W(p,q) & \text{if }\xi=R,\\
1-W(p,q) & \text{if }\xi=\neg R.
\end{array}\end{cases}. 
\end{equation*}
For any $q \in S^{*}_{y}(M\bar{c})$, we have 
\begin{align*}
    F_{\mu_{W}(x)}^{\varphi}(q) = \mathbf{1}_{\gamma}(q)  \mathbf{1}_{\zeta}(q) \int_{p \in S_{x}^{*}(M)} \mathbf{1}_{\tau}(p) W^{\psi}(p,\pi_{M}(q)) \prod_{i \leq n} W^{\eta_i}(p,\tp(c_i/M)) d\lambda. 
\end{align*}
Likewise for any $p \in S^\ast_{x}(M\bar{c})$, we have 
\begin{align*}
F^{\varphi^{*}}_{\mu_W(y)}(p) = \mathbf{1}_\tau(p) \mathbf{1}_\eta(p)\int_{q \in S^\ast_y(M)}\mathbf{1}_\gamma(q) & W^{\psi}(q, \pi_{M}(p)) \prod_{i \leq n} W^{\zeta_i}(q,\tp(c_i/M)) d\lambda. 
\end{align*}
Computing the two Morley products, we have that the value of $(\mu_W(x)\otimes\mu_W(y))(\varphi(x,y)) $ is
\begin{align*}
  \int_{q \in S_y(M\bar{c})}\int_{p \in S^\ast_x(M)}
\mathbf{1}_\gamma(q)\mathbf{1}_\zeta(q)\mathbf{1}_\tau(p) W^{\psi}(p, \pi_{M}(q)) \prod_{i \leq n} W^{\eta_i}(p,\tp(c_i/M)) d\lambda d\mu_{W},
\end{align*}
while the value of $(\mu_W(y) \otimes \mu_W(x))(\varphi(x,y))$ is
\begin{align*}
\int_{p \in S_x(M\bar{c})}\int_{q \in S^\ast_y(M)}\mathbf{1}_\tau(p)\mathbf{1}_\eta(p)\mathbf{1}_\gamma(q) W^{\psi}(q, \pi_{M}(p)) \prod_{i \leq n} W^{\zeta_i}(q, \tp(c_i/M)) d\lambda  d\mu_W.
\end{align*}
Since $\mu_W$ does not concentrate on points (\cref{prop:inv}), we can restrict the outer integrals to $S^\ast_y(M\bar{c})$ and $S^\ast_x(M\bar{c})$, respectively. Consider the maps $f_1\colon S_{x}^{*}(M\bar{c}) \to [0,1]$ and $f_2\colon S_{y}^{*}(M\bar{c}) \to [0,1]$ given, respectively, by 
\begin{align*}
f_1(q) := \int_{p \in S^\ast_x(M)}
\mathbf{1}_\gamma(q)\mathbf{1}_\zeta(q)\mathbf{1}_\tau(p) W^{\psi}(p,\pi_{M}(q)) \prod_{i \leq n} W^{\eta_i}(p,\tp(c_i/M)) d\lambda
\end{align*}
and
\begin{align*}
f_2(p) := \int_{q \in S^\ast_y(M)}\mathbf{1}_\tau(p)\mathbf{1}_\eta(p)\mathbf{1}_\gamma(q) W^{\psi}(q, \pi_{M}(p)) \prod_{i \leq n}W^{\zeta_i}(q, \tp(c_i/M)) d\lambda. 
\end{align*}
Now apply Lemma \ref{lemma:key} to the integrals defining the maps $f_1$ and $f_2$. We compute:
\begin{align*}
    (\mu_{W}(x) \otimes \mu_{W}(y))(\varphi(x,y))
    &= \int_{q \in S_{y}(M\bar{c})} F_{\mu_{W}(x)}^{\varphi}(q) d\mu_{W}(y)\\ 
    &= \int_{q \in S^{*}_{y}(M\bar{c})} f_{1}(q) d\mu_{W}(y) \\
    &\overset{(*)}{=} \int_{r \in S_{y}^{*}(M)} \sum_{s \in S_{y}^{*}(\bar{c})} f_{1}(\rho(r,s)) \prod_{i \leq n } W^{s}(r, \tp(c_i/M)) d\lambda \\
    &\overset{}{=} \int_{r \in S_{y}^{*}(M)} \sum_{s \in S_{y}^{*}(\bar{c})} \int_{p \in S_{x}^{*}(M)} \mathbf{1}_{\gamma}(\rho(r,s)) \mathbf{1}_{\zeta}(\rho(r,s)) \mathbf{1}_{\tau}(p) W^{\psi}(p, \pi_{M}(\rho(r,s))) \\
    &\hspace*{20pt} \prod_{i \leq n} W^{\eta_i}(p, \tp(c_i/M)) d\lambda
    \prod_{i \leq n} W^{s}(r,\tp(c_i/M)) d\lambda(r),
\end{align*}
where equation $(*)$ is an application of Lemma \ref{lemma:key}. Continuing the computation, we have:
\begin{align*}
&= \int_{r \in S_{y}^{*}(M)} \sum_{s \in S_{y}^{*}(\bar{c})} \int_{p \in S_{x}^{*}(M)} \mathbf{1}_{\gamma}(r) 
\mathbf{1}_{\zeta}(s) \mathbf{1}_{\tau}(p) W^{\psi} (p,r) 
\prod_{i \leq n} W^{\eta_i}(p,\tp(c_i/M)) d\lambda  \prod_{i \leq n} W^{s}(r, \tp(c_i/M)) d\lambda \\
&\hspace*{-4pt}\overset{(**)}{=} \int_{r \in S_{y}^{*}(M)} \int_{p \in S_{x}^{*}(M)} \mathbf{1}_{\gamma}(r) \mathbf{1}_{\tau}(p) W^{\psi}(p,r) \prod_{i \leq n} W^{\eta_i}(p, \tp(c_i/M)) d\lambda \prod_{i \leq n} W^{\zeta_i}(r,\tp(c_i/M)) d\lambda \\
&= \int_{r \in S_{y}^{*}(M)} \int_{p \in S_{x}^{*}(M)} \mathbf{1}_{\gamma}(r) \mathbf{1}_{\tau}(p) W^{\psi}(p,r) \prod_{i \leq n} W^{\eta_i}(p, \tp(c_i/M))  \prod_{i \leq n} W^{\zeta_i}(r,\tp(c_i/M)) d\lambda d\lambda,
\end{align*}
where equation $(**)$ follows from the fact that $\mathbf{1}_{\zeta}(s)$ determines a unique complete type in $S_{y}^{*}(\bar{c})$. A symmetric computation and an application of Fubini's theorem completes the proof. 
\end{proof}

We prove the main theorem (\cref{theorem:2main}) by induction on the length of graph formulas. In the induction step, one needs to know how to compute the integral of the product of the characteristic function of the restriction of a graph formula (to its first $n$ variables) against a function which factors (over the product) with respect to our Morley power. It turns out that the integral of the product of these two functions with respect to the Morley power is precisely the integral of the product of the factored function and sampled graphon with respect to the product of the measure $\lambda$. So, intuitively, the computation removes the encoded graphon from $\mu_{W}$ and pulls it into the integrand.

\begin{lemma}\label{lemma:integral} For $n \geq 2$, suppose that $\hat{h}\colon \prod_{i=1}^{n} S_{x_i}^{*}(M) \to [0,1]$ is a Borel map. Let $\pi\colon S^{*}_{x_1,...,x_n}(M) \to \prod_{i=1}^n S_{x_i}^*(M)$ be the obvious surjective map and define $h = \hat{h} \circ \pi$. Let $G$ be a graph on vertex set $\{1,...n\}$ and $\varphi_{G}(x_1,...,x_n)$ be the corresponding graph formula. Then
\begin{align*}
    \int_{q \in S^{*}_{x_1,...,x_{n}}(M)} 
    \mathbf{1}_{\varphi_{G}(x_1,...,x_{n})}(q) \cdot h(q) d\mu_{W}^{(n)} 
    =
    \int_{\bar{t} \in \prod_{i \leq n} S^{*}_{x_i}(M)} \hat{h}(\bar{t}) \prod_{1 \leq i < j \leq n } W^{G(i,j)}(t_i,t_j)d\lambda^{n}.
\end{align*}
\end{lemma}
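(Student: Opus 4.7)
The plan is to prove the statement by induction on $n\ge 2$, peeling off one variable at a time via the Morley product recursion $\mu_W^{(n+1)} = \mu_W(x_{n+1})\otimes \mu_W^{(n)}$. The main tool at every step is \cref{lemma:key}, which converts an inner $\mu_W$-integral over $S^*_{y}(M\bar c)$ into an integral over $S^*_{y}(M)$ against $\lambda$, weighted by $\prod_i W^{s}(r,\tp(c_i/M))$. The crucial observation is that when the integrand contains the indicator of a conjunction of literals $\bigwedge_i R^{\epsilon_i}(c_i,y)$, that indicator singles out a unique $s_0\in S^*_{y}(\bar c)$, collapsing the sum in \cref{lemma:key} to one term and producing the factors $\prod_i W^{\epsilon_i}(r,\tp(c_i/M))$ directly.

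For the base case $n=2$, $\varphi_G$ is the single literal $R^{G(1,2)}(x_1,x_2)$. Expanding $\mu_W^{(2)} = \mu_W(x_2)\otimes\mu_W(x_1)$ and applying \cref{lemma:key} to the inner integral over $S^*_{x_2}(Mb_1)$ (for $b_1$ realizing $q_1\in S^*_{x_1}(M)$) yields $\int_{r\in S^*_{x_2}(M)} \hat h(q_1,r)W^{G(1,2)}(r,q_1)\,d\lambda$, after which the outer integral against $\mu_W(x_1)$ is integration against $\lambda$, since $\mu_W$ restricted to $\mathcal{L}_{x_1}(M)$-formulas coincides with $\lambda$ by \cref{def:measure}. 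For the inductive step, let $G'$ be the induced subgraph of $G$ on $\{1,\ldots,n\}$, and factor
\[
\mathbf{1}_{\varphi_G(x_1,\ldots,x_{n+1})} = \mathbf{1}_{\varphi_{G'}(x_1,\ldots,x_n)}\cdot \prod_{i=1}^{n}\mathbf{1}_{R^{G(i,n+1)}(x_i,x_{n+1})}.
\]
Expanding $\mu_W^{(n+1)}$ as a Morley product and applying \cref{lemma:key} to the inner $\mu_W(x_{n+1})$-integral over $S^*_{x_{n+1}}(M\bar b)$ (for $\bar b$ realizing an $n$-type $q$) produces the auxiliary function
\[
\hat h'(t_1,\ldots,t_n) \defas \int_{S^*_{x_{n+1}}(M)}\hat h(t_1,\ldots,t_n,r)\prod_{i=1}^{n}W^{G(i,n+1)}(r,t_i)\,d\lambda,
\]
which is Borel by \cref{fact:kechris}. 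Applying the inductive hypothesis to $\hat h'$ and $G'$, then combining with Fubini and the symmetry $W(s,t)=W(t,s)$, yields the claim.

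The main obstacle is bookkeeping rather than conceptual: at each stage one must carefully track the projection $\pi_M\colon S^*_y(M\bar c)\to S^*_y(M)$ and the identification $\rho$ from \cref{lemma:key}, verify that the intermediate integrands are Borel and factor through the correct projections so that \cref{lemma:key} and the inductive hypothesis apply, and use the graphon symmetry $W(s,t)=W(t,s)$ to match the factor $W^{G(i,n+1)}(r,t_i)$ produced by \cref{lemma:key} with the correctly oriented factor $W^{G(i,n+1)}(t_i,t_{n+1})$ in the final product. The content of the argument is entirely in these reductions; no deeper model-theoretic input beyond \cref{lemma:key} and the Borel-definability of $\mu_W$ from \cref{prop:Borel} is required.
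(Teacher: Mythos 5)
Your overall strategy — peel off the last variable via the Morley product recursion, collapse the inner integral via \cref{lemma:key}, and apply the inductive hypothesis to the resulting auxiliary function $\hat{h}'$ — is the right shape, and your routing through \cref{lemma:key} is a genuinely different tactical choice from the paper (which never invokes \cref{lemma:key} in this proof, instead computing the fiber function $F_{\mu_{x_n}}^{\varphi_G\wedge\psi}$ of the Morley product by hand). However, there is a real gap: when you ``expand $\mu_W^{(n+1)}$ as a Morley product and apply \cref{lemma:key} to the inner $\mu_W(x_{n+1})$-integral,'' you are implicitly assuming a Fubini-type disintegration
\[
\int_{S^*_{x_1,\ldots,x_{n+1}}(M)} f\, d\mu_W^{(n+1)}
= \int_{S^*_{x_1,\ldots,x_n}(M)}\Bigl[\int_{S^*_{x_{n+1}}(M\bar{b})} f\, d\mu_W\Bigr]\, d\mu_W^{(n)}
\]
for the \emph{arbitrary Borel} integrand $f=\mathbf{1}_{\varphi_G}\cdot h$. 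But the Morley product definition yields this iterated form only when $f$ is the indicator of a \emph{formula}; for general Borel $f$ the identity needs to be proved, and proving it is exactly the content of the lemma. \cref{lemma:key} does not rescue you: it is a Fubini for a single copy of $\mu_W$ over $M\bar{c}$, not a Fubini for $\mu_W^{(n+1)}$ along the projection onto $S^*_{x_1,\ldots,x_n}(M)$, so it can only discharge the inner integral after you already have the iterated-integral form.

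The paper sidesteps this by explicitly restricting at each inductive stage to $\hat{h}$ of the form $\prod_{i\le n}\mathbf{1}_{\psi_i(x_i)}$, so that $\mathbf{1}_{\varphi_G}\cdot h$ is the indicator of a formula $\varphi_G\wedge\psi$ and the Morley product formula applies verbatim; it then extends to arbitrary Borel $\hat{h}$ by linearity, the dominated convergence theorem for uniform limits (continuous case), and transfinite induction up the Baire hierarchy (general Borel case). That reduction is where the technical weight of the lemma sits — it is the exact analogue of the reduction inside the proof of \cref{lemma:key} — and it cannot be bundled into ``bookkeeping.'' If you insert that reduction in front of your argument (or prove a standalone Fubini lemma for the Morley product of Borel-definable measures applied to Borel integrands), the rest of your computation — collapsing the sum in \cref{lemma:key} to the unique $s_0$ consistent with the $G(\cdot,n+1)$-literals, forming $\hat{h}'$, checking Borelness via \cref{fact:kechris}, applying the IH to $(G',\hat{h}')$, and invoking symmetry of $W$ — goes through and matches the paper's conclusion.
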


\begin{proof} To simplify notation, we let $\mu_{W} = \mu$. We prove the statement by induction on $n$. So let $n = 2$. We first prove the statement for functions of the form $\hat{h} = \mathbf{1}_{\psi_1(x_1)} \times \mathbf{1}_{\psi_2(x_2)}$ where $\psi_1(x_1) := \bigwedge_{i=1}^{m_1} R^{\epsilon(1,i)}(x_1,a_i)$ and $\psi_2(x_2) := \bigwedge_{j =1}^{m_2} R^{\epsilon(2,j)}(x_2,b_j)$ where $\epsilon(i,j) \in \{1,-1\}$ for $i \in \{1,2\}$ and $1 \leq j \leq m_i$. Notice that if we let $\psi(x_1,x_2) = \psi_1(x_1) \wedge \psi_2(x_2)$, then $h = \mathbf{1}_{\psi_1(x_1) \wedge \psi_2(x_2)}$. Without loss of generality, we let $\varphi_{G}(x_1,x_2) = R(x_1,x_2)$. Now consider the following computation:
\begin{align*}
    \int_{q \in S_{x_1,x_2}(M)} \mathbf{1}_{\varphi_{G}(x_1,x_2)} \cdot h(q) d\mu^{(2)} &= (\mu_{x_2} \otimes \mu_{x_1})(\varphi_{G}(x_1,x_2) \wedge \psi(x_1,x_2)) \\ 
    &= \int_{q \in S_{x_1}(M)} F_{\mu_{x_2}}^{\varphi_{G}(x_1,x_2) \wedge \psi(x_1,x_2)}(q) d\mu_{x_1} \\ 
    &= \int_{q \in S_{x_1}^{*}(M)} \mathbf{1}_{\psi_1}(q) \cdot F_{\mu_{x_2}}^{\varphi_{G}(x_1,x_2) \wedge \psi_2(x_2)}(q) d\mu_{x_1} \\ 
    &\overset{(a)}{=} \int_{q \in S_{x_1}^{*}(M)} \int_{p \in S_{x_2}^{*}(M)} \mathbf{1}_{\psi_1}(q) \mathbf{1}_{\psi_2}(p) W(p,q)d\lambda d\mu  \\
    &\overset{(b)}{=} \int_{q \in S_{x_1}^{*}(M)} \int_{p \in S_{x_2}^{*}(M)} \mathbf{1}_{\psi_1}(q) \mathbf{1}_{\psi_2}(p) W(p,q)d\lambda d\lambda\\ 
    &\overset{}{=} \int_{(t_1,t_2) \in S_{x_1}^{*}(M) \times S_{x_2}^{*}(M)} \hat{h}(t_1,t_2) \cdot W(t_1,t_2) \lambda^{2},
\end{align*}
where the final equation is what we require. We provide a few justifications. 
\begin{enumerate}[label=($\alph*$)]
    \item We replace the function $F_{\mu_{x_2}}^{\varphi_{G}(x_1,x_2) \wedge \psi_2(x_2)}$. Let $q \in S^*_{x_1}(M)$ and $b \models q$. Then $c \in \mathcal{U} \backslash M$ and since $\varphi_{G}(x_1,x_2) = R(x_1,x_2)$, we have  
    \begin{align*}
        F_{\mu_{x_2}}^{\varphi_{G}(x_1,x_2) \wedge \psi_2(x_2)}(q) &= \mu_{x_2}(R(x_2,c) \wedge \psi_{2}(x_2)) \\ 
        &= \int_{p \in S_{x_2}^{*}(M)} \mathbf{1}_{\psi_2}(p) W(p,\tp(c/M)) d\lambda \\ 
        &= \int_{p \in S_{x_2}^{*}(M)} \mathbf{1}_{\psi_2}(p) W(p,q) d\lambda. 
    \end{align*}
    \item In the equation, $\mu$ is actually the regular Borel probability measure corresponding to $\mu|_M$ over $S_{x_1}^{*}(M)$. We notice that by construction, $\lambda|_{M} = \mu|_{M}$. Thus the regular Borel measures on $S_{x_1}^{*}(M)$ corresponding to $\mu|_{M}$ and $\lambda|_{M}$ agree. Therefore, $\mu$ can be replaced with $\lambda$.
\end{enumerate}

This completes the proof for $\hat{h} = \mathbf{1}_{\psi_1(x_1)} \times \mathbf{1}_{\psi_2(x_2)}$. We now explain how to extend the proof to arbitrary Borel $\hat{h}$.
\begin{enumerate}
    \item Any $\mathcal{L}_{R}(M)$-formula can be written as a disjoint union of formulas of the form $\psi$ from the above argument. By linearity of integration, the statement holds for $\hat{h} = \mathbf{1}_{\theta_1(x_1)} \times \mathbf{1}_{\theta_2(x_2)}$ where $\theta_1,\theta_2$ are $\mathcal{L}_{R}(M)$-formulas.
    \item Again, by linearity of integration, the statement can be extended to functions of the form $\hat{h} = \sum_{i=1}^{n} r_i \mathbf{1}_{\theta_1^{i}(x_1)} \times \mathbf{1}_{\theta_2^{i}(x_2)}$ where $\theta_1^{i}(x_1),\theta_2^{i}(x_2)$ are $\mathcal{L}_{R}(M)$-formulas. 
    \item Now suppose that $\hat{h}\colon  \prod_{i=1}^{2} S_{x_i}^{*}(M) \to [0,1]$ is continuous. Note that $\prod_{i=1}^{2} S_{x_i}^{*}(M)$ is a Stone space. Hence any continuous function can be uniformly approximated by a finite convex combination of characteristic functions concentrating on clopens. In particular, one can find a sequence of functions $(g_i)_{i < \omega}$ which converges pointwise to $\hat{h}$ such that each $g_i$ is a finite convex combination of clopens. In the space $\prod_{i=1}^{2} S_{x_i}^{*}(M)$, clopens correspond precisely to products of $\mathcal{L}_{R}(M)$-formulas. Hence, the $g_i$ are precisely the functions from (2). Thus the statement holds via an application of the dominated convergence theorem. 
    \item Now suppose that $\hat{h}\colon \prod_{i=1}^{2} S_{x_i}^{*}(M) \to [0,1]$ is Borel. Since $\prod_{i=1}^{2} S_{x_i}^{*}(M)$ is Polish, the function $\hat{h}$ is of Baire class $\alpha$ for some $\alpha < \omega_1$. Hence, one can induct up the Baire hierarchy. Recall that Baire class $0$ functions are the continuous functions, which were handled in the previous case. Now suppose that the assertion holds for Baire class $\beta$ when $\beta < \alpha$, and that $\hat{h}$ is of Baire class $\alpha$. By definition, this means that $\hat{h}$ can be written as the pointwise limit of functions strictly lower in the Baire hierarchy. Then a straightforward application of the dominated convergence theorem completes the proof. 
\end{enumerate}

We now assume that the statement holds for $n -1$. By a similar argument to the base case, it suffices to consider $\hat{h}$ of the form $\prod_{i \leq n} \mathbf{1}_{\psi_i(x_i)}$ where $\psi_{i}(x_i) : = \bigwedge_{j=1}^{m_i} R^{\epsilon(i,j)}(x_i,b_j)$ and $\epsilon(i,j) \in \{1,-1\}$, $1 \leq i \leq n$ and $1 \leq j \leq m_i$. Let $\psi(x_1,...,x_n) = \wedge_{i=1}^{n} \psi_i(x_j)$ and so $h = \mathbf{1}_{\psi(x_1,...,x_n)}$. Let $\psi_{n-1}(x_1,...,x_{n-1})$ and $\psi_{n}(x_n)$ be the natural restrictions of $\psi(x_1,...,x_n)$ to the indicated tuples of variables. Let $\varphi_{G_{n-1}}(x_1,...,x_{n-1})$ be the graph formula corresponding to the restriction of graph $G$ to vertex set $\{1,...,n-1\}$. Notice that if we define $\widehat{h_0}\colon \prod_{i \leq n-1} S_{x_i}^{*}(M) \to [0,1]$ to be given by 
\begin{equation*}
    \widehat{h_0}(\bar{q}) := \prod_{i =1}^{n-1} \mathbf{1}_{\psi_{i}(x_i)}(q_i) \int_{t_n \in S^*_{x_n}(M)} \mathbf{1}_{\psi_{n}(x_n)}(t_n) \prod_{G \models R(n, i)} W(t_n,q_i) \prod_{G \models \neg R(n,i)} (1-W(t_n,q_i)) d\lambda
\end{equation*}
then it is straightforward to check that for each $q \in S_{x_1,...,x_n}^{*}(M)$, we have 
\begin{equation*}
F_{\mu_{x_n}}^{\varphi_{G}(x_{n};\bar{x}) \wedge \psi(x_n;\bar{x})}(q) = \mathbf{1}_{G_{n-1}(\bar{x})}(q) \cdot h_0(q)
\end{equation*}
where $h_0 = \widehat{h_0} \circ \pi$. The map $h_0$ is Borel since $W$ is Borel and by Fact \ref{fact:kechris}. Hence we have the following computation:
\begin{align*}
    \int_{q \in S_{\bar{x}}(M)} \mathbf{1}_{\varphi_{G}(x_1,...,x_n)}(q) h(q) d\mu^{(n)} 
     &=  \mu^{(n)} (\varphi_{G}(x_1,...,x_n) \wedge \psi(x_1,...,x_n)) 
    \\&= \int_{q \in S_{x_1,...,x_{n-1}}(M)} F_{\mu_{x_n}}^{\varphi_{G}(x_n;\bar{x}) \wedge \psi(x_n;\bar{x})}(q) d\mu^{(n-1)}
    \\ &= \int_{q \in S_{x_1,...x_{n-1}}^{*}(M)} \mathbf{1}_{G_{n-1}(\bar{x})}(q) h_0(q) d\mu^{(n-1)}
    \\&\overset{(*)}{=} \int_{\bar{t} \in \prod_{i \leq n-1} S^*_{x_{i}(M)}} \widehat{h_0}(\bar{t}) \prod_{1 \leq i < j \leq n -1} W^{G_{n-1}(i,j)}(t_i,t_j) d\lambda^{n-1} \\ 
  &\overset{(**)}{=} \int_{\bar{t} \in \prod_{i \leq n} S^*_{x_i}(M)} \hat{h}(\bar{t}) \prod_{1 \leq i < j \leq n } W^{G(i,j)}(t_i,t_j) d\lambda^{n}.
\end{align*}
Equation $(*)$ is an application of the induction hypothesis (reading from equation $(*)$ upwards). Equation $(**)$ follows from unpacking the function $\hat{h}_0$ and reorganizing terms. An almost identical argument to the one provided in the base case allows one to prove the result for arbitrary Borel $\hat{h}$.
\end{proof}

We are now ready to prove our main graphon representation theorem, Theorem \ref{theorem:2main}.
To prove Theorem~\ref{theorem:2main}, we need to compute a specific integral. More explicitly, we need to integrate the characteristic function of a graph formula times a Borel function which factors through the surjective map $\pi$ from \cref{lemma:integral}, i.e., $h = \hat{h} \circ \pi$. We have little control over the map $\hat{h}$. For this reason, we earlier proved \cref{lemma:integral} for all Borel $\hat{h}$. It is clear when trying to do the computation that we need an inductive argument, and so we prove the statement by induction on the number of variables and by increasing the complexity of the function at each stage. We first prove the identity for characteristic functions of clopen subsets of the product (in two variables). We then extend to linear combinations of these functions. Since the domain is a Stone space, linear combinations of characteristic functions of clopens are dense (with respect to the supremum norm) in the space of all continuous functions, and so the identity holds for all continuous $\hat{h}$. We then extend to all Borel $\hat{h}$ in two variables via the dominated convergence theorem (formally, we do so by iterating up the Borel hierarchy;  this works because the underlying space is Polish). We then induct on the number of variables and ``repeat'' a similar argument for each number of variables.

\begin{theorem}\label{theorem:2main} Let $W$ be a Borel graphon and $g\colon S^{+}_{\mathbf{x}}(\mathcal{U}) \to \str_{\mathcal{L}}$. Then $g_{*}(\kmpow{\mu_{W}}) = \mathbb{G}(\mathbb{N}, W)$. 
\end{theorem}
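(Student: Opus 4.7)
The plan is to show that the two Borel probability measures $g_{*}(\kmpow{\mu_{W}})$ and $\mathbb{G}(\mathbb{N},W)$ on the compact Hausdorff space $\str_{\mathcal{L}}$ agree on a generating $\pi$-system of clopens, and invoke the uniqueness of extension of premeasures. The natural $\pi$-system to use is the collection of sets of the form $\llbracket \varphi_{G}(i_{1},\ldots,i_{n})\rrbracket$, where $G$ ranges over finite graphs on vertex set $[n]$ and $i_{1},\ldots,i_{n}$ over tuples of distinct positive integers. These sets form a basis for the topology on $\str_{\mathcal{L}}$ (together with trivial equality/inequality formulas, which are handled uniformly), so by a standard monotone class argument, agreement on this basis implies agreement on all Borel subsets.

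First I would reduce to the case where the indexed vertices are $1,2,\ldots,n$. Both measures are $\Sym(\mathbb{N})$-invariant: $\mathbb{G}(\mathbb{N},W)$ by construction, and $g_{*}(\kmpow{\mu_{W}})$ by \cref{fact:excellent}, which applies because $\mu_{W}$ is $M$-excellent (\cref{lemma:excellent}) and does not concentrate on points (\cref{prop:inv}). Consequently, for any distinct $i_{1},\ldots,i_{n}$ and any finite graph $G$ on $[n]$, we have
\begin{equation*}
    g_{*}(\kmpow{\mu_{W}})(\llbracket \varphi_{G}(i_{1},\ldots,i_{n})\rrbracket) = g_{*}(\kmpow{\mu_{W}})(\llbracket \varphi_{G}(1,\ldots,n)\rrbracket),
\end{equation*}
and similarly for $\mathbb{G}(\mathbb{N},W)$, so it suffices to verify equality of the two measures on $\llbracket \varphi_{G}(1,\ldots,n)\rrbracket$ for every finite graph $G$ on $[n]$.

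The core of the proof is the identity
\begin{equation*}
    g_{*}(\kmpow{\mu_{W}})(\llbracket \varphi_{G}(1,\ldots,n)\rrbracket) = \kmpow{\mu_{W}}(g^{-1}(\llbracket \varphi_{G}(1,\ldots,n)\rrbracket)) = \kmpow{\mu_{W}}(\varphi_{G}(x_{1},\ldots,x_{n})),
\end{equation*}
where the last equality follows from the definition of $g$, which reads off the induced $\mathcal{L}$-structure on a realization of a type. Next I would apply \cref{lemma:integral} to the constant Borel function $\hat{h}\equiv 1$ to obtain
\begin{equation*}
    \kmpow{\mu_{W}}(\varphi_{G}(x_{1},\ldots,x_{n})) = \int_{\bar{t} \in \prod_{i\leq n} S^{*}_{x_{i}}(M)} \prod_{1\leq i < j \leq n} W^{G(i,j)}(t_{i},t_{j}) \, d\lambda^{n}.
\end{equation*}
By \cref{WF} (via the chosen measure-space isomorphism $F\colon (S_{x}^{*}(M),\lambda)\to ([0,1],\Leb)$), the right-hand side equals $\mathbb{G}(\mathbb{N},W)(\llbracket \varphi_{G}(1,\ldots,n)\rrbracket)$, giving the desired equality on the generating $\pi$-system.

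Finally, I would close the argument by noting that the finite conjunctions of instances $R^{\epsilon}(i,j)$ together with equality literals generate the Borel $\sigma$-algebra on $\str_{\mathcal{L}}$, and that the equality literals are handled trivially since $\kmpow{\mu_{W}}$ lives on $S_{\mathbf{x}}^{+}(\mathcal{U})$ (so $x_{i}\neq x_{j}$ has full measure, matching $\mathbb{G}(\mathbb{N},W)$). An application of the $\pi$-$\lambda$ theorem (or equivalently, uniqueness of regular Borel probability measures determined by their values on a countable basis of clopens) yields $g_{*}(\kmpow{\mu_{W}}) = \mathbb{G}(\mathbb{N},W)$. The only real obstacle in the proof has been absorbed into \cref{lemma:integral}; all that remains is this routine measure-theoretic packaging, plus the reduction via invariance to canonical index tuples.
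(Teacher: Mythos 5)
Your proposal is correct, and it follows the same high-level strategy as the paper: reduce via $\Sym(\mathbb{N})$-invariance (from \cref{lemma:excellent}, \cref{prop:inv}, and \cref{fact:excellent}) to checking agreement on complete graph formulas indexed by initial segments, and then invoke \cref{lemma:integral} together with \cref{WF}. The one genuine difference is that you apply \cref{lemma:integral} directly with the constant function $\hat{h}\equiv 1$, whereas the paper first peels off one Morley product, identifies the resulting fiber function $F_{\mu_{x_{n+1}}}^{\varphi_G(x_{n+1};\bar{x})}$ as $\mathbf{1}_{G_n}\cdot h$ for a nontrivial $\hat{h}$, and only then applies \cref{lemma:integral} to the $n$-variable restriction $G_n$. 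That peeling step is precisely one iteration of the induction already internal to the proof of \cref{lemma:integral}, so your route is shorter without losing anything; the reason the full Borel generality of $\hat{h}$ must be stated in the lemma is that it is needed within the lemma's own induction, not in the final application.

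One small imprecision worth flagging: the family $\{\llbracket \varphi_{G}(i_{1},\ldots,i_{n})\rrbracket\}$ with \emph{arbitrary} distinct index tuples is not literally a $\pi$-system, since intersecting $\llbracket \varphi_{G}(i_{1},\ldots,i_{n})\rrbracket$ with $\llbracket \varphi_{H}(j_{1},\ldots,j_{m})\rrbracket$ for overlapping but not nested index sets yields a partial (incompletely specified) edge constraint, not a complete graph formula. The fix is exactly what you implicitly do anyway: restrict to initial segments $\{1,\ldots,n\}$, where the family (together with $\emptyset$) \emph{is} a $\pi$-system generating the Borel $\sigma$-algebra, and let the $\Sym(\mathbb{N})$-invariance of both measures handle the translation to arbitrary index tuples. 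This is a presentational slip, not a gap in the argument.
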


\begin{proof}To simplify the notation, we denote $\mu_{W}$ simply as $\mu$. By Lemma \ref{lemma:excellent}, we have that $\mu$ is $M$-excellent and so by Fact \ref{fact:excellent} and \cref{prop:inv}, $g_{*}(\kmpow{\mu})$ is $\Sym(\mathbb{N})$-invariant. Hence it suffices to show that $g_{*}(\kmpow{\mu})$ and $\mathbb{G}(\mathbb{N}, W)$ agree on sets of the form $\llbracket \varphi_{G}(1,...,n+1) \rrbracket$ where $\varphi_{G}(x_1,...,x_{n+1})$ is a graph formula and $n \geq 1$. Notice that if $n = 1$, then the base case of Lemma \ref{lemma:integral}
gives a proof. Hence we only need to consider $n > 1$. 

Let $G_{n}(\bar{x})$ be the restriction of the graph formula $\varphi_{G}(\bar{x})$ to the variables $x_1,...,x_n$,
which is also a graph formula. 
Define $\hat{h}\colon \prod_{i=1}^{n} S_{x_i}^{*}(M) \to [0,1]$ by
\begin{equation*}
\hat{h}(\bar{t}) =  \int_{t_{n+1} \in S^{*}_{x_{n+1}}(M)} \prod_{G \models R(n+1,i)} W(t_i,t_{n+1}) \prod_{G \models \neg R(n+1,i)} (1-W(t_i,t_{n+1}))d\lambda,
\end{equation*}
and let $h = \hat{h} \circ \pi$. 
By a straightforward argument, for any $q \in S_{x_1,...,x_n}^{*}(M)$, we have  $F_{\mu_{x_{n+1}}}^{\varphi_{G}(x_{n+1};\bar{x})}(q) = \mathbf{1}_{G_{n}(\bar{x})}(q) \cdot h(q)$.
The map $\hat{h}$ is Borel by Fact \ref{fact:kechris} and so we may apply Lemma \ref{lemma:integral}:
\begin{align*}
    g_{*}(\kmpow{\mu})(\llbracket \varphi_{G}(1,...,n+1)\rrbracket) &= \kmpow{\mu}(g^{-1}(\llbracket \varphi_{G}(1,...,n+1)\rrbracket)) \\ 
    &= \kmpow{\mu}(\varphi_{G}(x_1,...,x_{n+1})) \\
    &= \left( \bigotimes_{i=1}^{n +1} \mu_{x_i} \right) (\varphi_{G}(x_1,...,x_{n+1})) \\
    &= \left(\mu_{x_{n+1}}  \otimes  \bigotimes_{i = 1}^{n} \mu_{x_i} \right)(\varphi_{G}(x_1,...,x_{n+1})) \\
    &= \int_{q \in S_{x_1,...,x_{n}}(M)} F_{\mu_{x_{n+1}}}^{\varphi_{G}^{*}(x_{n+1};x_{1},...,x_{n})}(q) d\mu^{(n)} \\
    &= \int_{q \in S_{x_1,...,x_{n}}^{*}(M)} \mathbf{1}_{\varphi_{G_{n}}(\bar{x})}(q) \cdot h(q) d\mu^{(n)} \\
    &\overset{(*)}{=} \int_{\bar{t} \in \prod_{i \leq n} S^{*}_{x_i}(M)} \hat{h}(\bar{t}) \prod_{1 \leq i < j \leq n} W^{G_{n}}(t_i,t_j) d\lambda^{n} \\ 
  &\overset{(**)}{=} \int_{\bar{t} \in \prod_{i \leq n+1} S_{x_i}^{*}(M)} \prod_{1 \leq i < j \leq n+1} W^{G}(t_i,t_j) d\lambda^{n+1}. \\
  &= \mathbb{G}(\mathbb{N}, W)(\llbracket \varphi_{G}(1,...,n+1)\rrbracket). 
\end{align*}
Equation $(*)$ follows from Lemma \ref{lemma:integral} while equation $(**)$ follows from unpacking $\hat{h}$ and reorganizing terms. 
\end{proof}

\begin{corollary}\label{cor:AFP1} Let $G$ be a countable graph. Then the following are equivalent. 
\begin{enumerate}
    \item There exists an $M$-excellent Keisler measure $\mu$ that does not concentrate on points and such that $\kmpow{\mu}(\mathbb{B}_{G}) = 1$.
    \item There exists a $\Sym(\mathbb{N})$-invariant measure $\eta$ on $\str_{\mathcal{L}}$ such that $\eta(\mathbb{A}_{G}) = 1$. 
    \item The graph $G$ has trivial group-theoretic definable closure, i.e., for any finite tuple $\bar{a} = a_1,...,a_n$ from $G$, we have that $\dcl(\bar{a}) = \bar{a}$, where $\dcl(\bar{a})$ is the collection of elements from $G$ that are fixed by all the automorphisms of $G$ which fix $\bar{a}$ pointwise.
\end{enumerate}   
\end{corollary}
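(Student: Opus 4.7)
The plan is to close a three-way cycle. The equivalence $(2) \Leftrightarrow (3)$ is exactly the content of \cite[Theorem~1]{ackerman2016invariant}, so it suffices to prove $(1) \Leftrightarrow (2)$, which is the genuinely new model-theoretic content and amounts to transferring between the two sampling procedures already bridged by Theorem~\ref{theorem:2main}.

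For $(1) \Rightarrow (2)$: given $\mu$ as in $(1)$, set $\eta := g_{*}(\kmpow{\mu})$. By Fact~\ref{fact:excellent}, $\eta$ is a $\Sym(\mathbb{N})$-invariant Borel probability measure on $\str_{\mathcal{L}}$, and since $g^{-1}(\mathbb{A}_G) = \mathbb{B}_G$ by the definitions, $\eta(\mathbb{A}_G) = \kmpow{\mu}(\mathbb{B}_G) = 1$, witnessing $(2)$.

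For $(2) \Rightarrow (1)$: start with $\eta$ as in $(2)$. Since Lemma~\ref{lemma:rep} takes an \emph{ergodic} invariant measure as input, first apply the ergodic decomposition theorem for Borel actions of $\Sym(\mathbb{N})$ on the standard Borel space $\str_{\mathcal{L}}$: because $\mathbb{A}_G$ is a $\Sym(\mathbb{N})$-invariant Borel subset with $\eta(\mathbb{A}_G) = 1$, the indicator of $\mathbb{A}_G$ is $\eta$-a.e.\ equal to its conditional expectation with respect to the invariant $\sigma$-algebra, so $\eta$-almost every ergodic component $\eta'$ in the decomposition of $\eta$ satisfies $\eta'(\mathbb{A}_G) = 1$. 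Fix one such ergodic $\eta'$; Lemma~\ref{lemma:rep} then yields a Borel graphon $W$ with $\mathbb{G}(\mathbb{N}, W) = \eta'$. Now take $\mu := \mu_W$ as in Definition~\ref{def:measure}. Proposition~\ref{prop:inv} ensures that $\mu_W$ is an $M$-invariant Keisler measure that does not concentrate on points; Lemma~\ref{lemma:excellent} ensures $\mu_W$ is $M$-excellent; and Theorem~\ref{theorem:2main} yields $g_{*}(\kmpow{\mu_W}) = \mathbb{G}(\mathbb{N}, W) = \eta'$. Combining these gives
\begin{equation*}
\kmpow{\mu_W}(\mathbb{B}_G) \;=\; g_{*}(\kmpow{\mu_W})(\mathbb{A}_G) \;=\; \eta'(\mathbb{A}_G) \;=\; 1,
\end{equation*}
which witnesses $(1)$.

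The only step that is not a purely formal consequence of already-proved results is the ergodic decomposition argument used to extract, from an arbitrary invariant $\eta$ with $\eta(\mathbb{A}_G)=1$, an ergodic component still concentrated on $\mathbb{A}_G$; this is standard but is the one place where genuine measure-theoretic care is required. Everything else is a direct unpacking of definitions combined with a single application of Theorem~\ref{theorem:2main} and Lemma~\ref{lemma:rep}, with the equivalence with condition~$(3)$ then handed off to the Ackerman--Freer--Patel characterization.
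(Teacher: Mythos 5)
Your argument is correct, and the $(1)\Rightarrow(2)$ direction and the reduction to $(2)\Leftrightarrow(3)$ via Ackerman--Freer--Patel match the paper exactly. Where you diverge is $(2)\Rightarrow(1)$: you invoke the ergodic decomposition theorem for the logic action of $\Sym(\mathbb{N})$ on $\str_{\mathcal{L}}$ to extract, from an arbitrary invariant $\eta$ concentrating on $\mathbb{A}_G$, an ergodic component $\eta'$ still concentrating on $\mathbb{A}_G$, and then feed $\eta'$ into Lemma~\ref{lemma:rep} to obtain a Borel graphon $W$. The paper instead short-circuits this by citing a specific observation in \cite[subsection 6.1.2]{ackerman2016invariant}: when a $\Sym(\mathbb{N})$-invariant measure concentrates on a single isomorphism class, one can produce directly a \emph{random-free} Borel graphon $W$ with $\mathbb{G}(\mathbb{N},W)$ concentrated on $\mathbb{A}_G$, so no decomposition is needed. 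Both routes are valid. Yours is more self-contained relative to the machinery already established in the paper (it only needs the ergodic representation Lemma~\ref{lemma:rep}, at the cost of invoking Farrell--Varadarajan ergodic decomposition for Polish group actions). The paper's route leans more heavily on the AFP reference but buys a slightly stronger structural fact for free, namely that the representing graphon may be taken random-free, which is the natural witness when the target is a single isomorphism class; this extra information is discarded in the corollary itself, so the two proofs are interchangeable for the stated result.
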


\begin{proof} Statements $(2)$ and $(3)$ are equivalent via \cite[Theorem 1.1]{ackerman2016invariant}. By 
Fact \ref{fact:excellent}, statement $(1)$ implies statement $(2)$.

Assume statement $(2)$. By an observation in  \cite[subsection 6.1.2] {ackerman2016invariant}, there exists a \emph{random-free graphon} $W$ (which is Borel) such that $\mathbb{G}(\mathbb{N}, W)$ concentrates on $\mathbb{A}_{G}$. By Lemma \ref{lemma:excellent}, the measure $\mu_{W}$ is $M$-excellent, and it clearly does not concentrate on points. By Theorem \ref{theorem:2main}, $g_{*}(\kmpow{\mu_{W}}) = \mathbb{G}(\mathbb{N}, W)$ and so $\mathbb{P}(\mu_{W})(\mathbb{B}_{G}) = 1$.  Hence statement $(1)$ holds.
\end{proof}

\begin{corollary}\label{cor:ergodic} Suppose $\nu$ is an ergodic $\Sym(\mathbb{N})$-invariant measure on $\str_{\mathcal{L}}$  concentrated on the collection of graphs. Then there exists an $M$-excellent Keisler measure $\mu \in \mathfrak{M}^{\inv}_{x}(\mathcal{U},M)$ that does not concentrate on points and such that $g_{*}(\mathbb{P}_{\mu}) = \nu$. 
\end{corollary}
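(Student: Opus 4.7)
The plan is to assemble this corollary from the pieces already in place, essentially observing that the previous work in this section does all the heavy lifting. Since $\nu$ is ergodic, $\Sym(\mathbb{N})$-invariant, and concentrates on graphs, \cref{lemma:rep} applies and yields a Borel graphon $W_\nu$ such that $\mathbb{G}(\mathbb{N}, W_\nu) = \nu$. This is the one external input required; it is where ergodicity is used.

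Given $W_\nu$, I would then pass to the associated Keisler measure $\mu := \mu_{W_\nu}$ from \cref{def:measure}. By \cref{prop:inv}, $\mu \in \mathfrak{M}^{\inv}_{x}(\mathcal{U},M)$ and $\mu$ does not concentrate on points. By \cref{lemma:excellent}, $\mu$ is $M$-excellent. Thus $\mu$ satisfies all the required properties of the corollary except for the pushforward identity.

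Finally, \cref{theorem:2main} gives $g_{*}(\kmpow{\mu}) = \mathbb{G}(\mathbb{N}, W_\nu)$, and combining this with the choice of $W_\nu$ yields $g_{*}(\kmpow{\mu}) = \nu$, as required. There is no genuine obstacle here: the construction, the excellence, and the sampling identity have all been established separately, and the corollary is just their concatenation modulo invoking \cref{lemma:rep} to translate ergodic invariant measures into graphons.
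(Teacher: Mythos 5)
Your proof is correct and follows exactly the route the paper takes: invoke \cref{lemma:rep} to obtain a Borel graphon $W_\nu$ with $\mathbb{G}(\mathbb{N},W_\nu)=\nu$, then apply \cref{prop:inv}, \cref{lemma:excellent}, and \cref{theorem:2main} to $\mu_{W_\nu}$. The paper simply compresses this into ``follows directly from \cref{lemma:rep} and \cref{theorem:2main}''; you have expanded the same argument with the relevant supporting citations made explicit.
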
 

\begin{proof} Follows directly from \cref{lemma:rep} and \cref{theorem:2main}. 
\end{proof}

\section{Hypergraphons, generic sampling, and random hypergraphs}
\label{sec:hypergraphon}

Here we prove the hypergraphon variant of the representation theorem. We begin by fixing some conventions and recalling some basic machinery associated to hypergraphons. By quantifier elimination and the fact that the measures we construct do not concentrate on points, formulas involving equality do not substantially affect the proofs, and so they are disregarded throughout.

Throughout this section we fix a positive integer $k > 2$ and let $T_{k}$ be the theory of the random $k$-uniform hypergraph in the language $\mathcal{L}_{k} = \{R(x_1,...,x_k)\}$. In other words, $T_{k}$ is the theory of the \Fraisse\ limit of all finite $k$-uniform hypergraphs (and thus admits quantifier elimination).  
We fix $\mathcal{U}$, a monster model of $T_{k}$, and $M$, a countable elementary submodel. We let $[m] = \{1,...,m\}$. For any set $X$ and any integer $t \leq |X|$, 
we let ${X \choose t}$
denote the collection of subsets of $X$ of size $t$
and ${X \choose t\leq k}$ denote the collection of non-empty subsets of $X$ of size less than or equal to $k$. 

For any subset $A$ of $\mathcal{U}$ and tuple of variables $\bar{x}$, we define the following spaces: 
\begin{enumerate}
    \item $S_{\bar{x}}^{*}(A)$ is the collection of non-realized types over $A$. 
    \item $S_{\bar{x}}^{+}(A) := \{ p \in S_{\bar{x}}^{*}(A)\st x_i \neq x_j \in p \text{~for all~} 
    i < j \le |\bar{x}|\}$. 
    \item Given a tuple of variables $\bar{x}$ such that $|\bar{x}| \leq k$, we let $S_{\bar{x}}^{\flat}(A)$ denote the collection of $R(\bar{x};\bar{y})$-types over $A$, where $|\bar{x}| + |\bar{y}| = k$.
    \end{enumerate}
     Notice that if $|\bar{x}| = k$, then $S_{\bar{x}}^{\flat} = \{R(\bar{x}), \neg R(\bar{x})\}$. As a convention, we write $S_{\bar{x}}^{*}(M), S_{\bar{x}}^{+}(M)$ and $S_{\bar{x}}^{\flat}(M)$ simply as $S_{\bar{x}}^{*}, S_{\bar{x}}^{+}$  and $S_{\bar{x}}^{\flat}$ respectively, whenever it is convenient to do so. We usually apply this convention to shorten the length of complicated terms. 
    \begin{enumerate}[resume]
    \item Let $I \subseteq \mathbb{N}_{> 0}$ and suppose $I = \{{i_1},...,{i_m}\}$, $|m| \leq k$. Notice that by symmetry of the hyperedge relation, the elements of $S_{x_{i_1},...,x_{i_m}}^{\flat}$ are fixed under the natural action of $\Sym(m)$ and so this space does not depend on the ordering of the variables. So we let $S_{I}^{\flat} := S_{\{x_{i_1},...,x_{i_m}\}}^{\flat}$. 
    \end{enumerate}
    We will also use the following notation:
    \begin{enumerate}[resume]
    \item Suppose  $B_0 \subseteq M$ and $C_0 \subseteq \mathcal{U} \backslash M$ are such that $|B_0| + |C_0| = k-1$. Then $R^{1}(x,B_0,C_0) = R(x,B_0,C_0)$ and $R^{-1}(x,B_0,C_0) = \neg R(x,B_0,C_0)$. 
    \item Given sets of parameters $B_0 \subseteq M$, $C_0 \subseteq \mathcal{U} \backslash M$ such that $|B_0| + |C_0| = k - 1$ and a formula $\varphi(x)$, we write $\epsilon^{\varphi}(B_0,C_0) = 1$ if $\varphi \vdash R(x,B_0,C_0)$ and $\epsilon^{\varphi}(B_0,C_0) = -1$ if $\varphi \vdash \neg R(x,B_0,C_0)$. In this section, we will often have expressions which include terms of the form `$R^{\epsilon^{\varphi}(B_0,C_0)}$'. 
\end{enumerate}

Our first fact demonstrates that the space of types in this setting can be thought as a product of local type spaces. 

\begin{fact} For any $n \geq 1$, consider the map
\begin{equation*}
\tau_{n}\colon  S_{x_1,...,x_n}^{+}(M) \to  \prod_{I \in {[n]\choose t \leq k }} S_{I}^{\flat}(M) 
\end{equation*}
defined by
\begin{equation*}
\tau_{n}(p) = (\tp_{I}^{\flat}(p))_{I \in {[n] \choose t \leq k }},
\end{equation*}
where for any $I$ as above,  
\begin{equation*}
    \tp_{I}^{\flat}(p) = \left\{R(\{x_l\}_{l \in I},\bar{b}) \st \bar{b} \in M^{k - |I|}\right\} \cap p. 
\end{equation*}
The map $\tau_{n}$ is a homeomorphism.
\end{fact}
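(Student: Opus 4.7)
The plan is to exhibit $\tau_n$ as a continuous bijection between compact Hausdorff spaces, from which it follows automatically that $\tau_n$ is a homeomorphism. First, $S^+_{x_1,\ldots,x_n}(M)$ is a closed, hence compact, subspace of the Stone space $S_{x_1,\ldots,x_n}(M)$: each condition $x_i \neq x_j$ (for $i < j \leq n$) and each condition $x_i \neq m$ (for $i \leq n$, $m \in M$) is clopen, and their intersection is closed. The target $\prod_{I} S^\flat_I$ is compact Hausdorff as a product of Stone spaces. Continuity of $\tau_n$ is immediate coordinate by coordinate: the preimage of a basic clopen of $S^\flat_I$ is cut out of $S^+_{x_1,\ldots,x_n}(M)$ by the same defining formula.

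Injectivity then follows from quantifier elimination for $T_k$. A type in $S^+_{x_1,\ldots,x_n}(M)$ is determined by its quantifier-free part, and in $S^+$ every equality atom is already decided. The remaining atomic formulas are precisely the instances of $R(\{x_\ell\}_{\ell \in I}, \bar{b})$ and their negations, with $\emptyset \neq I \subseteq [n]$, $|I| \leq k$, and $\bar{b} \in M^{k-|I|}$. These formulas are partitioned by the index $I$ (the set of variable subscripts appearing), so the family $(\tp^\flat_I(p))_I$ is a complete record of the quantifier-free type of $p$, and hence of $p$ itself.

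For surjectivity, given $(q_I)_I \in \prod_I S^\flat_I$ I would form the partial type
\[
\Sigma \defas \bigcup_I q_I \cup \{x_i \neq x_j : i < j \leq n\} \cup \{x_i \neq m : i \in [n],\ m \in M\}
\]
and check consistency via compactness. Any finite $\Sigma_0 \subseteq \Sigma$ amounts to a finite $k$-uniform hypergraph diagram on the $n$ fresh vertices together with finitely many named vertices of $M$, and such a diagram is realized in $\mathcal{U}$ by the extension axioms built into $T_k$, i.e.\ by universal homogeneity of the \Fraisse\ limit of finite $k$-uniform hypergraphs. A realization yields a complete type extending $\Sigma$; it lies in $S^+_{x_1,\ldots,x_n}(M)$, and by construction maps under $\tau_n$ to $(q_I)_I$.

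Putting the pieces together, $\tau_n$ is a continuous bijection from a compact space onto a Hausdorff space, hence closed and therefore a homeomorphism. The main point requiring care is the surjectivity step, specifically the observation that the coordinates $q_I$ specify conditions on pairwise disjoint sets of atomic $R$-formulas: each atomic $R$-instance \emph{belongs} to exactly one $I$, namely the set of variable indices it mentions, so no syntactic clash between the $q_I$'s can occur. Once that is noted, the extension property of $T_k$ together with standard compactness delivers the required realization and closes the argument.
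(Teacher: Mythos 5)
Your proof is correct and fleshes out exactly what the paper leaves implicit. The paper dispatches the Fact with the one-line justification that it ``follows directly from quantifier elimination and the symmetry of the hyperedge relation''; your argument unpacks this into the standard compact--Hausdorff--bijection scaffolding, with quantifier elimination driving injectivity and the description of atomic formulas, the symmetry and anti-reflexivity of $R$ justifying the indexing by unordered subsets $I$ (so that distinct $I$'s constrain disjoint families of $R$-atoms), and the extension axioms of $T_k$ giving surjectivity via compactness. This is the same route, just made explicit.
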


\begin{proof} Follows directly from quantifier elimination and the symmetry of the hyperedge relation. 
\end{proof}

\begin{convention} Suppose $D$ is a finite set of parameters from $\mathcal{U}$ such that $|D| \leq k$. Then we let $\tp^{\flat}(D) = \tp_{|D|}^{\flat}(\tp(D/M))$. In other words, $\tp^{\flat}(D)$ is precisely $\{R(\bar{x},C) \st C \in M^{k-|D|} \text{~and~} M \models R(D,C)\}$. Moreover, if $d \in \mathcal{U}$, we let $\tp^{\flat}(d) = \tp^{\flat}(\{d\})$. 
\end{convention}

We now recall a useful family of Keisler measures. These measures will play the role of $\lambda$ in the previous section and mimic the Lebesgue measure on $[0,1]$. They are primarily used for scaffolding.

\begin{definition}\label{def:product_measure} Let $A_1,...,A_m,B_1,...,B_n$ be a sequence of distinct sets of size $k-1$ from $\mathcal{U}$. Then there exists a unique Keisler measure $\lambda$ on $\mathcal{L}_{x}(\mathcal{U})$ such that 
\begin{equation*}
    \lambda \left(\bigwedge_{i \leq n} R(x,A_i) \wedge \bigwedge_{j \leq m} \neg R(x,B_j) \right) = \frac{1}{2^{n+m}}. 
\end{equation*}
Moreover, this measure is definable (over the empty set) as well as $M$-excellent. For any $I \subseteq \mathbb{N}_{>0}$ such that $|I| \leq k$, we construct a measure $\lambda_{I}$ on $S_{I}^{\flat}$ by considering the pushforward of the measure $\bigotimes_{i \in I} \lambda_{x_i}$ on $S_{(x_i)_{i \in I}}(\mathcal{U})$ to $S_{I}^{\flat}$ via the natural projection map, i.e., $\tp_{I}^{\beta}(-|_{M})$. It is straightforward to check that if $C_{1},...,C_{m},D_{1}$,...,$D_{n}$ is a sequence of distinct sets of size $k - |I|$ from $M$, then 
\begin{equation*}
    \lambda_{I} \left( \bigwedge_{l \leq n} R((x_i)_{i \in I},C_i) \wedge \bigwedge_{j \leq m }R((x_i)_{i \in I},D_j) \right) = \frac{1}{2^{n + m }}. 
\end{equation*}
\end{definition}

\begin{definition}\label{def:hypergraphon} A \textbf{$k$-ary hypergraphon} is a measurable map $W$ where
\begin{equation*}W\colon \prod_{I \in {[k]\choose t \leq k-1}} [0,1]^{I} \to [0,1],
\end{equation*}
and $W$ is invariant under the natural action of $\Sym(k)$ on the indexed spaces. As in the previous section, we will be concerned with Borel hypergraphons (i.e., those satisfying that the map $W$ is Borel-measurable).  As in \cref{WF}, we may identify any Borel hypergraphon $W$ with a map $W'$ where 
\begin{equation*}
    W'\colon \prod_{I \in {[k]\choose t \leq k -1}} S_{I}^{\flat} \to [0,1],
\end{equation*}
where each $S_{I}^{\flat}$ is  equipped with the measure $\lambda_{I}$. For the rest of the section, we will often abuse this identification and think of Borel hypergraphons as maps of the form $W'$.
\end{definition}

\begin{convention} For a hypergraphon $W$, we write $W^{1}(\cdot) = W(\cdot)$ and $W^{-1}(\cdot) = 1- W(\cdot)$. This is just book keeping since we will have functions from index sets to $\{\pm 1\}$. In practice, we will often see terms of the form $W^{\epsilon^{\xi}(B_0,C_0)}$.  
\end{convention}

Just as in the graphon case, given a $k$-ary hypergraphon, one may construct a $\Sym(\mathbb{N})$-invariant probability measure $\mathbb{G}(\mathbb{N},W)$ on $\str_{\mathcal{L}}$ for 
$\mathcal{L} = \{R(x_1,...,x_k)\}$. This measure will concentrate on $\mathcal{L}$-structures which are uniform $k$-ary hypergraphs. 

The following definition can be found in \cite[Page 3]{zhao2015hypergraph}. Technically, the construction there gives a definition for random $k$-uniform hypergraphs on a proper initial segment of $\mathbb{N}$; however, the extension to $\mathbb{N}$ is straightforward. We also refer the reader to \cite[Definition 2.8]{austin2008exchangeable} for an equivalent definition in terms of recipes.

\begin{definition} Let $W$ be a $k$-ary hypergraphon. We define $\mathbb{G}(\mathbb{N},W)$ to be the distribution of the following random $k$-uniform hypergraph (with hyperedge relation $R$) on the vertex set $\mathbb{N}$:

\begin{enumerate}
    \item Select an $\Leb$-i.i.d. array $\mathbf{x}_{A \in N}$ where $N = {\mathbb{N} \choose t \leq k - 1 }$. 
    \item Select an $\Leb$-i.i.d. array $\gamma_{i_1 < ... < i_k \in \mathbb{N}}$. 
    \item For $i_1 < ... < i_k \in \mathbb{N}$, 
    \begin{equation*}
        G(\mathbb{N},W) \models R(i_1,...,i_k) \text{ if and only if } \gamma_{i_1 <...<i_k} < W(\mathbf{x}_{B \subseteq A}),
    \end{equation*}
    where $A = \{i_1,...,i_k\}$. 
\end{enumerate}
\end{definition}

Given a Borel $k$-ary hypergraphon $W$, we will construct a corresponding Keisler measure $\mu_{W}$. Again, no generality will be lost by restricting to the Borel setting.

\begin{definition} Let $n \geq k$ and let $G$ be a $k$-uniform hypergraph on the set $\{1,...,n\}$. The \textbf{hypergraph formula corresponding to $G$} (in the language with a $k$-ary relation symbol $R$) is 
\begin{equation*}
    \varphi_{G}(x_1,...,x_n):= \bigwedge_{G \models R(i_1,...,i_k)}R(x_{i_1},...,x_{i_k}) \wedge \bigwedge_{G \models \neg R(i_1,...,i_k)} \neg R(x_{i_1},...,x_{i_k}). 
\end{equation*}
\end{definition}

\begin{convention} Suppose that $G$ is a hypergraph on a vertex set $\{1,...,n\}$. Given $I \in {[n]\choose k}$, for notational compactness we will sometimes write $W^{G(I)}(-)$ to mean $W(-)$ if $G \models R(I)$ and $1 - W(I)$ if $G \models \neg R(I)$. 
\end{convention} 

\begin{definition}For each $n \geq 1$, we let 
\begin{equation*}
    P_{n} := \prod_{I \in {[n]\choose t \leq k-1}} S_{I}^{\flat}. 
\end{equation*}
\end{definition}

\begin{lemma} Let $W$ be a hypergraphon and $n \geq k$. Suppose that $G$ is a hypergraphon on the vertex set $\{1,...,n\}$. Then 
\begin{align*}
        \mathbb{G}(\mathbb{N}, W)(\llbracket \varphi_{G}(i_{1},...,i_{n})\rrbracket) &= \int_{\bar{t} \in P_{n}} \prod_{ I_0 \in {[n] \choose k}} W^{G(I_0)}(\bar{t}_{I_0}) \, d\bar{\lambda} \\ 
        &= \int_{\bar{t} \in P_{n}} \prod_{\substack{ I_0 \in {[n] \choose k} \\ G \models R(I_0) }} W(\bar{t}_{I_0}) \prod_{\substack{ J_0 \in {[n] \choose k} \\ G \models \neg R(J_0) }} (1 - W(\bar{t}_{J_0})) \, d\bar{\lambda}, 
\end{align*}
where $\bar{t}_{I_0}$ (as well as $\bar{t}_{J_0}$) is the appropriate input for the graphon $W(-)$ with unary data corresponding to the elements from $I_0$, binary data corresponding to pairs of elements from $I_0$, etc. In other words, the tuple $\bar{t}_{I_0}$ is an element of  $\prod_{L \in {I_0 \choose t \leq k-1}} S_{L}^{\flat}$. 
\end{lemma}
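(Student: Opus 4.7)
The plan is to compute $\mathbb{G}(\mathbb{N},W)(\llbracket \varphi_G(i_1,\ldots,i_n) \rrbracket)$ directly from the three-step sampling definition and then transfer from the Lebesgue formulation on $[0,1]$-cubes to the type-space formulation, using the identification recorded in Definition~\ref{def:hypergraphon}.

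First, I would note that the event $\llbracket \varphi_G(i_1,\ldots,i_n) \rrbracket$ depends only on the finitely many random coordinates $\mathbf{x}_A$ for nonempty $A \subseteq \{i_1,\ldots,i_n\}$ with $|A| \leq k-1$, together with the coordinates $\gamma_{j_1 < \cdots < j_k}$ indexed by $k$-subsets of $\{i_1,\ldots,i_n\}$. In particular, exchangeability of the sampling recipe ensures that the probability depends only on the isomorphism type of $G$, not on the specific distinct labels $i_1,\ldots,i_n$.

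Next I would condition on the $\mathbf{x}$-array: for each $I_0 \in \binom{\{i_1,\ldots,i_n\}}{k}$, the conditional probability of $R(I_0)$ equals $W(\mathbf{x}_{B \subseteq I_0})$ by step (3) of the sampling definition, and across distinct $I_0$'s these events are conditionally independent because the $\gamma$'s are independent. Applying Fubini/Tonelli yields
\begin{equation*}
    \mathbb{G}(\mathbb{N},W)(\llbracket \varphi_G(i_1,\ldots,i_n) \rrbracket) = \int \prod_{I_0 \in \binom{[n]}{k}} W^{G(I_0)}(\bar{x}_{I_0}) \, d\Leb,
\end{equation*}
where the integral is over the Lebesgue product indexed by $\binom{[n]}{t \le k-1}$, and $\bar{x}_{I_0}$ denotes the natural projection to coordinates indexed by nonempty subsets of $I_0$.

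Finally, I would invoke the Borel measure-space isomorphisms $(S_I^\flat, \lambda_I) \cong ([0,1], \Leb)$ from Definition~\ref{def:hypergraphon} (as in Remark~\ref{WF} for the graphon case), under which the Borel hypergraphon $W$ is identified with the $P_n$-indexed map $W'$. Taking the product of these isomorphisms rewrites the integral on the $[0,1]$-cube as an integral over $P_n$ with respect to $\bar{\lambda}$, giving the claimed formula. The second displayed equation of the lemma then follows by merely unpacking the convention $W^{G(I_0)}$. The main (minor) obstacle is bookkeeping: one must carefully match the indexing of $W$'s arguments in the sampling recipe against the indexing in $P_n$, and verify that the product of the isomorphisms $(S_I^\flat,\lambda_I) \cong ([0,1],\Leb)$ (over all $I \in \binom{[n]}{t \le k-1}$) sends $\bar{\lambda}$ to Lebesgue product measure; this is immediate from the fact that each $\lambda_I$ is, by construction, the pushforward of $\bigotimes_{i \in I}\lambda_{x_i}$ along $\tp_I^\flat$.
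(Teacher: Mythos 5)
Your proposal is correct and is essentially the argument that the paper only cites (``a straightforward generalization of the standard equation defining induced homomorphism densities''). You fill in the details: conditioning on the $\mathbf{x}$-array, using conditional independence of the $\gamma$'s across $k$-subsets, Fubini, and then transporting along the product of Borel measure-space isomorphisms $(S_I^\flat,\lambda_I)\cong([0,1],\Leb)$; that is exactly what the cited references do in the graph and hypergraph cases, and your final bookkeeping observation that $\bar{\lambda}$ pulls back to the Lebesgue product is correct and worth making explicit.
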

\begin{proof} 
This is a straightforward generalization to hypergraphons 
of the standard equation defining induced homomorphism densities (see 
\cite[\S7.2]{lovasz2012large} for induced homomorphism densities of graphons, and \cite[\S4.3]{zhao2015hypergraph} for homomorphism densities of hypergraphons).
\end{proof}

\begin{lemma}\label{lemma:hyper-rep}
Suppose $\nu$ is an ergodic
$\Sym({\Nats})$-invariant measure concentrated on the collection of $k$-uniform hypergraphs with underlying set $\Nats$. Then there is a Borel hypergraphon $W_\nu$ such that $\mathbb{G}(\Nats, W_\nu) = \nu$. 
\end{lemma}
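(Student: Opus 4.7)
The plan is to mirror the proof of \cref{lemma:rep} from the graphon setting, invoking the appropriate representation theorem for ergodic exchangeable $k$-uniform hypergraphs in place of Lov\'asz's graphon representation theorem. Specifically, the Aldous--Hoover--Kallenberg representation theorem (as formulated for $k$-uniform hypergraphs by Aldous \cite{MR637937}, Hoover \cite{Hoover79}, with the hypergraphon perspective developed by Elek--Szegedy \cite{MR2964622}, Austin, and Zhao) asserts that every ergodic $\Sym(\Nats)$-invariant measure concentrated on $k$-uniform hypergraphs on $\Nats$ arises as $\mathbb{G}(\Nats, W')$ for some measurable hypergraphon $W'$ (a priori Lebesgue-measurable with respect to the product $\sigma$-algebra on $\prod_{I \in \binom{[k]}{t \leq k-1}} [0,1]^{I}$).

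Given $\nu$ as in the statement, I would first apply this representation theorem to obtain a (not necessarily Borel) measurable hypergraphon $W'_{\nu}$ with $\mathbb{G}(\Nats, W'_{\nu}) = \nu$. The second step is a standard measure-theoretic replacement: every Lebesgue-measurable map from a product of unit intervals into $[0,1]$ agrees almost everywhere with a Borel-measurable map (see, e.g., \cite[Exercise 13.11]{lovasz2012large} in the graphon case; the same argument carries over componentwise to products of arity-indexed cubes). Applying this to $W'_{\nu}$ yields a Borel hypergraphon $W_{\nu}$ that differs from $W'_{\nu}$ only on a set of product Lebesgue measure zero. The symmetry condition (invariance under the natural $\Sym(k)$-action on the indexing) can be preserved by symmetrizing $W_{\nu}$ over $\Sym(k)$, which only alters $W_{\nu}$ on a nullset.

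The final step is to verify that this nullset modification does not change the induced invariant measure, i.e., $\mathbb{G}(\Nats, W_{\nu}) = \mathbb{G}(\Nats, W'_{\nu}) = \nu$. This follows because the sampling procedure defining $\mathbb{G}(\Nats, \cdot)$ integrates the hypergraphon against a product of Lebesgue measures (the $\Leb$-i.i.d.\ arrays indexed by $\binom{\Nats}{t \leq k-1}$), and functions agreeing almost everywhere produce identical integrals; hence the probability assigned to each basic cylinder set $\llbracket \varphi_{G}(i_1, \ldots, i_n) \rrbracket$ is the same for $W_{\nu}$ and $W'_{\nu}$, and these determine the Borel measure on $\str_{\mathcal{L}}$.

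The only real obstacle, which is purely an appeal to the literature rather than a calculation, is citing the correct version of the Aldous--Hoover representation tailored to ergodic exchangeable $k$-uniform hypergraphs in hypergraphon form; the Borel-modification step is routine once that is in hand. I would simply cite the Aldous--Hoover--Kallenberg theorem together with the $k$-uniform hypergraphon reformulation from \cite{MR2964622} or \cite{zhao2015hypergraph} and then perform the nullset adjustment as above.
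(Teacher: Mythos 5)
Your proposal is correct and follows essentially the same route as the paper: invoke the Aldous--Hoover--Kallenberg representation (with ergodicity eliminating the $\emptyset$-indexed randomness) to obtain a measurable hypergraphon, then replace it with a Borel one differing on a nullset, which leaves $\mathbb{G}(\Nats,\cdot)$ unchanged since the sampling integrates against product Lebesgue measure. The only (welcome) extra care in your write-up is the explicit $\Sym(k)$-symmetrization step to recover pointwise, rather than a.e., symmetry of the Borel modification.
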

\begin{proof}
The $\Sym({\Nats})$-invariant measure $\nu$  has an 
Aldous--Hoover--Kallenberg representation, considered as a distribution of a jointly exchangeable array \cite[Lemma 7.22]{kallenberg2005probabilistic}.
Moreover, because $\nu$ is ergodic, this representation can be taken to be one that ignores the $\emptyset$-indexed randomness 
\cite[Lemma 7.35]{kallenberg2005probabilistic}.

By the cryptomorphism between hypergraphons and Aldous--Hoover--Kallenberg representations (see \cite{austin2008exchangeable} for hypergraphons, and \cite{diaconis2008graph} and 
\cite[Theorem~11.52 and Proposition~14.62]{lovasz2012large} for more explicit graphon versions), there is a Lebesgue hypergraphon $W$ such that $\mathbb{G}(\Nats, W) = \nu$.

Then by (the hypergraphon analogue of) 
\cite[Exercise~11.31]{lovasz2012large} 
there is a Borel hypergraphon $W'$ that differs from $W$ on a nullset; we likewise have
$\mathbb{G}(\Nats, W') = \nu$. 
\end{proof}

\begin{definition}\label{def:measure2} Given a Borel $k$-ary hypergraphon $W$, we associate an $M$-invariant Keisler measure $\mu_W$. Given finite sets of parameters $B \subseteq M$ and $C \subseteq \mathcal{U}\setminus M$, a \defn{complete formula (over $BC$)} 
is a conjunction  $\Xi(x,B,C)$ of the form
\[ 
\Xi(x,B,C) := \bigwedge_{0 \leq t \leq k - 1}\, \bigwedge_{\substack{C_0 \subseteq C \\ |C_0| = t}}\, \bigwedge_{\substack{B_0 \subseteq B \\ |B_0| = k - 1 - t}} R^{\epsilon^{\Xi}(B_0,C_0)}(x,B_0,C_0), 
\] 
where each $\epsilon^{\Xi}(C_0,B_0)$ is a value in $\{1,-1\}$. Note that every definable set differs by a finite set from a disjoint union of (sets defined by) complete formulas. If we stipulate that the measure of any finite set has value $0$, then it is sufficient to define $\mu_W$ only for complete formulas, and extend to all formulas in the obvious way. 

Given a complete formula $\Xi(x,B,C)$, we let
\begin{equation*}
\Xi_{B}(x) := \bigwedge_{\substack{B_0 \subseteq B \\ |B_0| = k-1}} R^{\epsilon^{\Xi}(B_0,\EmptySet)}(x,B_0). 
\end{equation*} 
Now the value of $\mu_W(\Xi(x,B,C))$ is given by the quantity
\begin{align*}
    \int_{p \in S_{x}^{\flat}} \int_{\bar{q} \in \mathbb{S}_{C} }\mathbf{1}_{\Xi_{B}}(p) \left( \prod_{t = 1}^{ k -2} \prod_{C_0 \in {C \choose t}} \prod_{B_0 \in {B \choose k - 1 - t}} \mathbf{1}_{R^{\epsilon^{\Xi}(B_0,C_0)}(x,B_0,\bar{z}_{t})} (q_{C_0}) \right) \left( \prod_{C_0 \in {C \choose k- 1}} W^{\epsilon^{\Xi}(\EmptySet,C_0)}(p,\mathring{\mathbf{q}}_{[C_0]}) \right)d\bar{\lambda}, 
\end{align*}
where, 
\begin{enumerate}
    \item The set $\mathbb{S}_{C} = \prod_{t = 1}^{k-2} \prod_{C_0 \in {C \choose t}} S_{x\bar{z}_{t}}^{\flat}$ where $\bar{z}_{t} = z_1,...,z_{t}$. Each element $\bar{q}$ of  $\mathbb{S}_{C}$ may be written as $\bar{q} = (q_{C_0})_{C_0 \subseteq C, 0 <|C_0| \leq k - 2}$. This space is also equipped with the appropriate product measure, i.e., each indexed space is equipped with the appropriate measure as defined in Definition \ref{def:product_measure}. The measure $\bar{\lambda}$ is the product of the appropriate measure on $S_{x}^{\flat} \times \mathbb{S}_{C}$. Finally, we remark that the space $\mathbb{S}_{C}$ does not depend on the choice of $C$, but only on its cardinality. Thus we may also write $\mathbb{S}_{C}$ as $\mathbb{S}_{|C|}$.
    \item The tuple $(p,\mathring{\mathbf{q}}_{[C_0]})$ is 
    \begin{equation*}
        (p, ((\tp^{\flat}(C_0'))_{\EmptySet \neq C_0' \subsetneq C_0}, (q_{C_0'})_{\EmptySet \neq C_0' \subsetneq C_0})), 
    \end{equation*}
    i.e., it is the appropriate argument for the hypergraphon $W$. In particular, if $C_0 = \{c_1,...,c_{k-1}\}$, we think of $p,\tp^{\flat}(c_1),...,\tp^{\flat}(c_{k-1})$ as our 1-dimensional data to feed into $W$. For $t \leq k -1$ and index $I \in {[k] \choose t}$, if $I$ does not contain $1$, then the input for this index is $\tp^{\flat}(\{c_{i - 1} \st i \in I\}/M)$. If the index $I$ does contain $1$, then the input for this index is $q_{\{c_{i-1}\st i \in I\}}$. We will also sometimes write $(p,\mathring{\mathbf{q}}_{[C_0]})$ as 
    \begin{equation*}
        (p,[\tp^{\flat}(C_0)],\bar{q}_{[C_0]}), 
    \end{equation*} where $[\tp^{\flat}(C_0)] = (\tp^{\flat}(C_0'/M))_{\EmptySet \neq C_0' \subsetneq C_0}$ and $\bar{q}_{[C_0]} = (q_{C_0'})_{\EmptySet \neq C_0' \subsetneq C_0}$. 
\end{enumerate}
\end{definition}

The following proposition is conceptually straightforward but requires many technical calculations, and so we include its proof in \cref{sec:nightmare-proof}.

\begin{proposition}\label{prop:nightmare} Suppose that $W$ is a Borel $k$-ary hypergraphon. Then $\mu_{W}$ is a Keisler measure. 
\end{proposition}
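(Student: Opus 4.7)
The plan is to verify that $\mu_W$ satisfies the axioms of a finitely additive probability measure on the Boolean algebra $\mathcal{L}_x(\mathcal{U})$ modulo logical equivalence. By quantifier elimination for $T_k$ and the stipulation that finite sets have measure $0$, every definable set is equivalent (up to a null finite subset) to a finite disjoint union of sets defined by complete formulas over some finite $B \cup C$ with $B \subseteq M$, $C \subseteq \mathcal{U} \setminus M$. Extending $\mu_W$ by linearity to such disjoint unions yields finite additivity automatically, provided the value of $\mu_W(\varphi)$ is independent of the chosen decomposition. Since any two decompositions have a common refinement obtained by merging their parameter sets, everything reduces to a single \emph{refinement lemma}: for any complete formula $\Xi(x,B,C)$ and any $e \in \mathcal{U} \setminus (B \cup C)$,
\[
\mu_W(\Xi(x,B,C)) \;=\; \sum_{\Xi'} \mu_W(\Xi'(x,B',C')),
\]
where $(B',C')=(B \cup \{e\}, C)$ if $e \in M$ and $(B',C')=(B, C \cup \{e\})$ otherwise, and the sum ranges over all complete formulas $\Xi'$ over $B'C'$ consistent with $\Xi$. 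Unitarity $\mu_W(\top)=1$ then follows by direct computation with $B=C=\emptyset$ (the integrand is identically $1$), positivity is immediate from non-negativity of the integrand, and $M$-invariance is visible from the definition since all quantities depend only on types over $M$.

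First I would dispatch the easier Case 1, where $e \in M$. Here the integration space $S_x^\flat \times \mathbb{S}_C$ does not change. The extensions $\Xi'$ of $\Xi$ are in bijection with the independent assignments $\epsilon^{\Xi'}(B_0',C_0) \in \{\pm 1\}$ for each pair $(B_0',C_0)$ with $e \in B_0' \subseteq B \cup \{e\}$, $C_0 \subseteq C$, and $|B_0'| + |C_0| = k-1$. By Fubini these independent sums distribute through the integrand; for each such $(B_0',C_0)$ the sum $\sum_{\epsilon \in \{\pm 1\}} \mathbf{1}_{R^\epsilon(\cdots)} = 1$ pointwise. Since the hypergraphon factors are indexed only by pairs with $B_0 = \emptyset$, they are unaffected. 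What remains is exactly the integrand defining $\mu_W(\Xi(x,B,C))$.

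The main obstacle is Case 2, $e \in \mathcal{U} \setminus M$, because now the integration space itself enlarges to $\mathbb{S}_{C \cup \{e\}} \cong \mathbb{S}_C \times \prod_{C_0' \ni e,\, 1 \leq |C_0'| \leq k-2} S_{x\bar{z}_{|C_0'|}}^\flat$, introducing new type variables $q_{C_0'}$. The extensions $\Xi'$ correspond to independent choices $\epsilon^{\Xi'}(B_0,C_0') \in \{\pm 1\}$ indexed by pairs $(B_0,C_0')$ with $e \in C_0'$ and $|B_0| + |C_0'| = k-1$. After summing over $\Xi'$ and invoking Fubini, each indicator factor with $|C_0'| \leq k-2$ contributes $\sum_\epsilon \mathbf{1}_{R^\epsilon(\cdots)} = 1$, and each top-level hypergraphon factor $W^\epsilon(p,\mathring{\mathbf{q}}_{[C_0']})$ with $|C_0'| = k-1$ and $e \in C_0'$ contributes $W + (1-W) = 1$. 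The crucial combinatorial observation is that each new variable $q_{C_0'}$ occurs only in factors of one of these two kinds --- either its own indicator factors, or inside $\mathring{\mathbf{q}}_{[D_0']}$ for some $D_0' \supsetneq C_0'$ with $|D_0'| = k-1$ and $e \in D_0'$ --- so after summation the integrand is independent of every new $q_{C_0'}$. The extra $\bar{\lambda}$-integrations then each contribute a factor of $1$, collapsing the expression to the integral over $S_x^\flat \times \mathbb{S}_C$ that defines $\mu_W(\Xi(x,B,C))$. The difficulty here is entirely bookkeeping: conceptually we are showing that the ``new'' portion of the integrand collectively has total mass $1$ irrespective of the disposition of the original $\Xi$.
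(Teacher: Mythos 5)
Your proof is correct, and it is organized differently from the paper's. The paper inducts on \emph{atomic formulas}: it fixes a complete formula $\Xi(x,B,C)$ and a new atom $\gamma(x) = R(x,D,E)$ with $D \subseteq M$, $E \subseteq \mathcal{U} \setminus M$, and $|D|+|E| = k-1$, then proves $\mu_W(\Xi \wedge \gamma) + \mu_W(\Xi \wedge \neg\gamma) = \mu_W(\Xi)$ in three cases ($E = \emptyset$, $D = \emptyset$, both nonempty). The mixed Case 3 forces a triple decomposition of the agnostic types into pieces $r_1 \wedge r_2 \wedge r_3$ according to whether their parameters lie inside $M$, outside $M$, or both, and invokes the auxiliary combinatorial identity (Lemma A.1). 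You instead induct on \emph{parameters}: your refinement lemma adds a single new point $e$ and sums over all ways of extending $\Xi$ to a complete formula over the enlarged parameter set, with only two cases ($e \in M$, $e \notin M$). This sidesteps the paper's Case 3 entirely, because a single new parameter never produces an atomic formula mixing fresh $M$-parameters with fresh non-$M$-parameters; the sum over extensions factors cleanly into a product of $\sum_{\epsilon} \mathbf{1}_{R^\epsilon(\cdot)} = 1$ for the new indicator factors and $W + (1-W) = 1$ for the new hypergraphon factors. Your key combinatorial observation — that each new coordinate $q_{C_0'}$ appears only in factors indexed by a set containing $e$, so after the sum the integrand is constant in the new coordinates and the extra $\bar\lambda$-integrations contribute $1$ — is exactly right, and your reduction of well-definedness and finite additivity to the refinement lemma via common refinements is also sound. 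The two proofs establish the same thing via equivalent telescoping arguments, but your one-parameter-at-a-time induction trades the paper's heavier per-case bookkeeping for a longer (but simpler) induction.
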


The proofs of the next several propositions and lemmas are similar to the proofs of the corresponding statements for graphons in Section~\ref{sec:graphon}. The complications arise because of the existence of non-trivial lower-arity data which need to be accounted for. However, their general methods are similar.

\begin{proposition} Suppose that $W$ is a Borel $k$-ary hypergraphon. Then $\mu_{W}$ is $M$-invariant. 
\end{proposition}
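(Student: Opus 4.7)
The plan is to check $M$-invariance directly from \cref{def:measure2}, by observing that the value $\mu_W(\Xi(x,B,C))$ on a complete formula depends on the parameters only through $B$ (as a subset of $M$), the type $\tp(C/M)$, and the sign function $\epsilon^{\Xi}$. Once this is established, $M$-invariance follows by the standard reduction: if $\bar{a} \equiv_M \bar{b}$, then $\bar{a}$ and $\bar{b}$ agree on their $M$-components and their tuples of non-$M$-components realize the same type over $M$, so the complete-formula decompositions of $\varphi(x,\bar{a})$ and $\varphi(x,\bar{b})$ pair up with matching $B$'s and $\tp(C/M) = \tp(C'/M)$'s (modulo formulas forcing $x$ into a finite set, which carry $\mu_W$-measure zero).

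First I would reduce to complete formulas. By quantifier elimination in $T_k$, any $\mathcal{L}_k$-formula with parameters from $\mathcal{U}$ is equivalent to a Boolean combination of instances $R^{\pm 1}(x,B_0,C_0)$ together with equality formulas in $x$; after discarding the equality part (which defines a $\mu_W$-null set unless it is tautological), what remains is a finite disjoint union of complete formulas $\Xi_i(x,B,C)$ where $B \subseteq M$ and $C \subseteq \mathcal{U}\setminus M$ record the parameters. An $M$-elementary map sending $\bar{a} \to \bar{b}$ sends the decomposition for $\varphi(x,\bar{a})$ to a matching decomposition for $\varphi(x,\bar{b})$ with the same $B$ and with $C$ replaced by a set $C'$ satisfying $\tp(C/M) = \tp(C'/M)$, preserving all the signs $\epsilon^{\Xi_i}$.

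Second I would read the dependence of $\mu_W(\Xi(x,B,C))$ off the integrand in \cref{def:measure2}: the factor $\mathbf{1}_{\Xi_B}(p)$ depends only on $B$ and on the signs $\epsilon^{\Xi}(B_0,\EmptySet)$; each factor $\mathbf{1}_{R^{\epsilon^{\Xi}(B_0,C_0)}(x,B_0,\bar{z}_t)}(q_{C_0})$ depends only on $B_0$, the signs, and the dummy variable $q_{C_0}$ over which one integrates; and each factor $W^{\epsilon^{\Xi}(\EmptySet,C_0)}(p,\mathring{\mathbf{q}}_{[C_0]})$ depends on $C$ only through the lower-arity types $\tp^{\flat}(C_0'/M)$ for $\EmptySet \neq C_0' \subsetneq C_0$. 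The measure $\bar{\lambda}$ on $S_x^{\flat} \times \mathbb{S}_C$ depends only on $|C|$ since $\mathbb{S}_C = \mathbb{S}_{|C|}$. Replacing $C$ by $C'$ with $\tp(C/M)=\tp(C'/M)$ reindexes the dummy variables via a canonical bijection and replaces each $\tp^{\flat}(C_0'/M)$ by the equal value $\tp^{\flat}((C_0')'/M)$, leaving the integral unchanged.

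The main obstacle is purely bookkeeping, namely verifying that every ingredient of the layered integral in \cref{def:measure2}, including the unary factor $\mathbf{1}_{\Xi_B}$, the intermediate-arity factors involving the $q_{C_0}$, and the top-arity factors $W^{\epsilon^{\Xi}(\EmptySet,C_0)}$, truly depends on $C$ only through the types $\tp^{\flat}(C_0'/M)$ for subsets $C_0' \subseteq C$ of size at most $k-1$; everything else is a reindexing by a bijection $C \to C'$, which is invisible to a product measure over the product type-space $\mathbb{S}_{|C|}$.
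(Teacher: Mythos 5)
Your proposal is correct and follows essentially the same route as the paper's proof: reduce to complete formulas, then observe that the value of $\mu_W(\Xi(x,B,C))$ depends on the parameters only through $B$, the signs $\epsilon^{\Xi}$, and the type of $C$ over $M$ (since $\mathbb{S}_C$ depends only on $|C|$, the $C_0$ subscripts are mere indexing, and the $W$-factors depend on $C$ only via the types $\tp^\flat(C_0'/M)$). You spell out the reduction to complete formulas and the reindexing bijection more explicitly than the paper does, but the substance is identical.
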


\begin{proof} It suffices to check the statement for complete formulas, $\Xi(x,B,C)$. The space $\mathbb{S}_{C}$ depends only on the cardinality of $C$; when we write $C_0$ in the subscript of $q_{C_0}$ where $C_0 \subseteq C$ is such that $|C_0| \leq k - 2$, the $C_0$ is used only for indexing, not for its value.
Moreover, for any set $C_0 \subseteq C$ such that $|C_0| = k-1$, we have that $W(p,\mathring{\mathbf{q}}_{[C_0]})$ only depends on the type of $C_0$ over $M$. And so the value of $\mu(\Xi(x,B,C))$ only depends on the type of $C$ over $M$. 
\end{proof}

\begin{proposition} Suppose that $W$ is a Borel $k$-ary hypergraphon. Then $\mu_{W}$ is Borel-definable over $M$. 
\end{proposition}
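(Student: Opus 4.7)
The plan is to follow the template of the graphon case (\cref{prop:Borel}) while accounting for the lower-arity data that now enter the integrand of Definition~\ref{def:measure2}. First, I would reduce to a manageable class of formulas: by quantifier elimination for $T_k$ and the fact that $\mu_W$ does not concentrate on points, it suffices to show that $F^{\varphi}_{\mu_W}$ is Borel when $\varphi(x,\bar y) = \theta(x,\bar y)\wedge \tau(\bar y)$, with $\theta$ a conjunction of $R$-literals each involving the variable $x$ and $\tau$ a conjunction of $R$-literals among $\bar y$ alone. Since $F^{\varphi}_{\mu_W}(q) = \mathbf{1}_\tau(q)\cdot F^{\theta}_{\mu_W}(q)$ and $\mathbf{1}_\tau$ is clopen, the problem reduces to verifying that $F^{\theta}_{\mu_W}$ is Borel.

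Second, I would partition $S_{\bar y}(M)$ according to the $M$-versus-non-$M$ pattern of the realizations: for each $A\subseteq [m]$, let $E_A$ be the Borel set of types in which $y_i$ is realized in $M$ iff $i\in A$. On a cell $E_A$, a realization $\bar b$ of $q\in E_A$ splits canonically as $\bar b = (B,C)$ with $B$ indexed by $A$ and $C\subseteq \mathcal{U}\setminus M$ indexed by $[m]\setminus A$. Unpacking Definition~\ref{def:measure2}, on $E_A$ one has $F^{\theta}_{\mu_W}(q) = \int_{S_x^\flat \times \mathbb{S}_{m-|A|}} h_A(p,\bar q',q)\,d\bar\lambda$, where the integrand $h_A$ encodes the combinatorial pattern prescribed by $\theta$: it is a product of indicator functions for the literals of $\theta$ whose parameters lie entirely in $B\cup\{x\}$, indicators for the literals $R^{\epsilon^{\theta}(B_0,C_0)}(x,B_0,\bar z_{|C_0|})$ applied to the appropriate $q'_{C_0}$-coordinates, and one factor $W^{\epsilon^{\theta}(\emptyset,C_0)}(p,\mathring{\mathbf{q}}'_{[C_0]})$ for each $(k-1)$-subset $C_0\subseteq C$; the lower-arity inputs $\tp^\flat(C_0'/M)$ for $C_0'\subsetneq C_0$ entering $\mathring{\mathbf{q}}'_{[C_0]}$ are read off from $q$ via the canonical homeomorphism $\tau_m$.

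Third, I would assemble the pieces. Joint Borel measurability of $h_A$ on $S_x^\flat\times \mathbb{S}_{m-|A|}\times E_A$ is clear: the indicator factors are clopen, $W$ is Borel by hypothesis, and the projections $q\mapsto \tp^\flat(C_0'/M)$ coming from $\tau_m$ are continuous. Applying Fact~\ref{fact:kechris} with the product measure $\bar\lambda$ on $S_x^\flat\times \mathbb{S}_{m-|A|}$ yields that $q\mapsto \int h_A\,d\bar\lambda$ is Borel on $E_A$; summing $\mathbf{1}_{E_A}$ times these Borel maps over $A\subseteq [m]$ exhibits $F^{\theta}_{\mu_W}$ as a Borel function on all of $S_{\bar y}(M)$, which is what is required. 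The main obstacle is purely notational: one must keep careful track of how the argument $\mathring{\mathbf{q}}'_{[C_0]}$ decomposes into the $q'_{C_0'}$ coordinates (which are integrated out against $\bar\lambda$) and the $\tp^\flat(C_0'/M)$ coordinates (which depend Borelly on the parameter $q$), and then confirm joint Borel measurability in all of these variables.
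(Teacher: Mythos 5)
Your proof is correct and follows essentially the same route as the paper's: the same reduction via quantifier elimination to formulas $\theta(x,\bar y)\wedge\tau(\bar y)$, the same partition of $S_{\bar y}(M)$ into cells $E_A$ according to the $M$-versus-non-$M$ pattern, the same piecewise Borel integrand built from clopen indicators, the lower-arity projections, and factors of $W$, and the same appeal to Fact~\ref{fact:kechris}. The one cosmetic difference is that you integrate over $S_x^\flat\times\mathbb{S}_{m-|A|}$ in a single application of Fact~\ref{fact:kechris}, whereas the paper first integrates over $\mathbb{S}_m$ to define $f_A$ and then over $S_x^\flat$ against $\lambda_{\{x\}}$; this is inconsequential.
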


\begin{proof} It suffices to consider the case $\Xi(x,\bar{y}) \wedge \tau(\bar{y})$ where if we plug in parameters $D$ for $\bar{y} = y_1,...,y_m$, the resulting formula $\Xi(x,D)$ is complete. We prove that the map $F_{\mu_{W}}^{\Xi(x,\bar{y}) \wedge \tau(\bar{y})}$ is Borel. The proof is quite similar to Proposition \ref{prop:Borel}. For each $i \leq m$, we let $\pi_{i}\colon S_{\bar{y}}(M) \to S_{y_{i}}(M)$ be the natural projection map. For each $A \subseteq [m]$, we consider the Borel set 
\begin{equation*} E_{A} := \bigcap_{k \in A} \pi_{k}^{-1}(M) \cap \bigcap_{k \in [m]\backslash A} \pi_{k}^{-1}(S_{y_k}(M)\backslash M). 
\end{equation*}
Note that $\{E_{A} \st A \subseteq [m] \}$ forms a partition of $S_{\bar{y}}(M)$. For each $A \subseteq [m]$, we consider the function $f_{A}\colon S_{x}^{\flat}(M) \times E_{A} \to [0,1]$ given by
\begin{align*}
    f_{A}(p,r) := \int_{\bar{q} \in \mathbb{S}_{m}} \mathbf{1}_{\Xi(x,\pi_{A}(r))}(p) \prod_{t=1}^{k-2} &\prod_{I_0 \in {[m] \backslash A\choose t}} \prod_{A_0 \in {A\choose k -t - 1}} \mathbf{1}_{R^{\epsilon(A_0,I_0)}(x,\pi_{A_0}(r),\bar{z}_{t})}(q_{I_0}) \\
    & \prod_{I_0 \in {[m] \backslash A \choose k - 1}} W^{\epsilon(\EmptySet,I_0)}(p,[\tp^{\flat}(\pi_{I_0}(r))],\bar{q}_{[I_0]})\bar{\lambda}.
\end{align*}
The above map is Borel by Fact \ref{fact:kechris}. Define the map $h\colon S_{x}^{\flat}(M) \times S_{\bar{y}}(M) \to [0,1]$ via
\begin{equation*}
    h(p,r) := \left( \sum_{A \subseteq [m]} \mathbf{1}_{E_{A}}(q) f_{A}(p,r) \right) \cdot \mathbf{1}_{\tau(\bar{y})}(r). 
\end{equation*}
Again, by Fact \ref{fact:kechris}, we have that 
\begin{equation*}
    r \mapsto \int_{p \in S_{x}^{\flat}} h(p,r)d\lambda_{\{x\}}, 
\end{equation*}
is Borel. However, $F_{\mu_{W}}^{\Xi(x,\bar{y}) \wedge \tau(\bar{y})}(r) = \int_{p \in S_{x}^{\flat}} h(p,r)d\lambda_{\{x\}}$ and so our map is Borel. 
\end{proof}

\begin{proposition}\label{prop:adequate2} Suppose that $W$ is a Borel $k$-ary hypergraphon. Then $\mu_{W}$ is $M$-adequate. Moreover, if $W_1,W_2,W_3$ are Borel $k$-ary hypergraphons, then 
\begin{equation*}
    (\mu_{W_1} \otimes \mu_{W_2}) \otimes \mu_{W_3} = \mu_{W_1} \otimes( \mu_{W_2} \otimes \mu_{W_3}). 
\end{equation*}
\end{proposition}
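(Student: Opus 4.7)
The plan is to follow the same strategy as in the proof of Proposition~\ref{prop:adequate} for graphons, since the key inputs are purely formal consequences of Borel-definability over a countable model rather than anything specific to the binary setting. Recall that $M$-adequacy (Definition~\ref{adex}) requires three things: Borel-definability of $\mu_W$ over $M$, Borel-definability of all finite Morley powers of $\mu_W$, and self-associativity. The preceding proposition already gives Borel-definability of $\mu_W$ over the countable model $M$, so the remaining content is purely about iterated Morley products of Borel-definable measures over a countable model.

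First I would invoke \cite[Theorem~2.13]{CGH}, which says that in a countable language the Morley product of two measures that are each Borel-definable over a countable model is again Borel-definable over a (possibly larger but still) countable model, and moreover that any triple of such measures is associative under the Morley product. Since $T_k$ is in the countable language $\Lc_k = \{R\}$ and $M$ is countable, these hypotheses are satisfied by $\mu_W$. A routine induction on the number of factors then gives that $\mu_W^{(n+1)}(\bar x_1,\dots,\bar x_{n+1}) = \mu_W(\bar x_{n+1}) \otimes \mu_W^{(n)}(\bar x_1,\dots,\bar x_n)$ is Borel-definable (over a countable model obtained by closing $M$ under the parameters produced at each step); this verifies clause~(2) of Definition~\ref{adex}. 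The associativity clause of \cite[Theorem~2.13]{CGH}, again applied inductively, shows that iterated Morley products of $\mu_W$ do not depend on the placement of parentheses, giving clause~(3). Together with the preceding proposition this establishes that $\mu_W$ is $M$-adequate.

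For the ``moreover'' part, the same theorem applies verbatim to any triple $\mu_{W_1}, \mu_{W_2}, \mu_{W_3}$: each is Borel-definable over the countable model $M$ by the preceding proposition, so by \cite[Theorem~2.13]{CGH} we get the associativity identity
\[
(\mu_{W_1} \otimes \mu_{W_2}) \otimes \mu_{W_3} = \mu_{W_1} \otimes (\mu_{W_2} \otimes \mu_{W_3}).
\]
There is no substantive obstacle here: all the work is packaged into \cite[Theorem~2.13]{CGH}, and the new ingredient specific to hypergraphs, namely Borel-definability of $\mu_W$, is supplied by the preceding proposition. The only thing to be careful about is bookkeeping for the countable model over which Borel-definability is witnessed after each Morley product, but this is handled uniformly by enlarging $M$ to contain the finitely many parameters appearing in any given formula, which leaves it countable.
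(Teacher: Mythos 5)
Your proposal is correct and follows essentially the same route as the paper: the paper's proof also reduces everything to Borel-definability of $\mu_W$ over the countable model $M$ and then invokes \cite[Theorem~2.13]{CGH} (closure of Borel-definability under Morley products and associativity for triples of Borel-definable measures over countable models), with an induction to cover all finite powers. Your extra remark about enlarging $M$ to absorb parameters is just careful bookkeeping, not a departure from the paper's argument.
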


\begin{proof} This follows directly from the fact that $\mu_{W}$ is Borel-definable over a countable model. See Proposition \ref{prop:adequate} for a similar explanation. 
\end{proof}

Fix a finite set $C \subseteq \mathcal{U} \backslash M$. In the statement of the following lemma, note that the type space $S^*_x(MC)$ factors as $S^\flat_x \times \mathbb{S}_{C}\times S^\ast_x(C)$. 

\begin{lemma}\label{lemma:key3} Suppose $C \subseteq \mathcal{U} \backslash M$ and $|C| = m$. For any Borel-measurable function $f \colon S^{*}_x(MC) \to [0,1]$, we have
    \begin{gather*}
\int_{s \in S_x^*(MC)} f(s) d \mu_W = 
\int_{p \in S^{\flat}_{x}}\int_{\bar{q} \in \mathbb{S}_{C}} \sum_{r \in S_x^\ast(C)} \hat{f}(p,\bar{q},r) W^{\dagger}(p,\bar{q},r)d\bar{\lambda},
    \end{gather*}
   where $W^\dagger(p,\bar{q},r) = \prod_{C_0 \in {C\choose k -1}} W^{\epsilon^{r}(\EmptySet,C_0)}(p,\mathring{\mathbf{q}}_{[C_0]})$ and $r$ is identified with the unique complete formula which isolates it over $C$.
\end{lemma}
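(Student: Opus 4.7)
The plan is to mimic the proof of \cref{lemma:key} step by step, tracking the extra bookkeeping forced by the non-trivial lower-arity data captured in $\mathbb{S}_C$. The strategy has two components: first, verify the identity on a generating family of indicators, and second, bootstrap through the standard hierarchy (simple functions $\to$ continuous functions $\to$ Borel functions) via linearity and the dominated convergence theorem.

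The starting point is the factorization
\[
 S^{*}_{x}(MC) \;\cong\; S^\flat_x \times \mathbb{S}_C \times S^{*}_x(C),
\]
which, by quantifier elimination in $T_k$, can be read off from the partition of the $(k-1)$-subsets appearing in predicates $R(x,-)$ by how many of their elements lie in $C$: zero elements gives $p \in S^\flat_x$, between $1$ and $k-2$ gives $\bar{q} \in \mathbb{S}_C$, and exactly $k-1$ gives $r \in S^{*}_x(C)$. Under this identification, $\hat{f}$ is simply $f$ viewed on the product, and every complete formula $\Xi(x,B,C)$ over a finite $B \subseteq M$ takes the factored form
\[
\mathbf{1}_\Xi(p,\bar{q},r) \;=\; \mathbf{1}_{\Xi_B}(p) \cdot\!\!\! \prod_{\substack{1 \le t \le k-2 \\ C_0 \in \binom{C}{t},\; B_0 \in \binom{B}{k-1-t}}} \!\!\!\mathbf{1}_{R^{\epsilon^\Xi(B_0,C_0)}(x,B_0,\bar{z}_t)}(q_{C_0}) \cdot \mathbf{1}_{\{r_\Xi\}}(r),
\]
where $r_\Xi$ is the unique element of $S^{*}_x(C)$ whose $(k-1)$-subset data agree with $\Xi$.

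For the base case I would verify the identity when $f = \mathbf{1}_{\Xi(x,B,C)}$. Plugging the factored indicator above into the proposed right-hand side, the sum over $r \in S^{*}_x(C)$ collapses to the single term $r = r_\Xi$, and $W^\dagger(p,\bar{q},r_\Xi)$ becomes $\prod_{C_0 \in \binom{C}{k-1}} W^{\epsilon^\Xi(\emptyset,C_0)}(p,\mathring{\mathbf{q}}_{[C_0]})$. The remaining integral over $S^\flat_x \times \mathbb{S}_C$ is then literally the expression used to define $\mu_W(\Xi(x,B,C))$ in \cref{def:measure2}, so the identity holds by unwinding the definition.

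The extension then follows the template at the end of the proof of \cref{lemma:key}. Since $\mu_W$ puts zero mass on finite sets, every $\mathcal{L}_x(MC)$-formula is $\mu_W$-a.e.\ a finite disjoint union of complete formulas, so linearity extends the identity to indicators of arbitrary $\mathcal{L}_x(MC)$-formulas and thence to all simple functions built from clopen sets. Because $S^{*}_x(MC)$ is a Polish Stone space whose basic clopens are indexed by formulas, uniform approximation and dominated convergence give the identity for continuous $f$, and a transfinite induction up the Baire class hierarchy (again using dominated convergence at successor stages) completes the extension to arbitrary Borel $f$. The main obstacle I expect is the base case: matching the factored expression for $\mathbf{1}_\Xi$ with the intricate multi-index product defining $\mu_W(\Xi(x,B,C))$ is notationally dense, and careful tracking of the nested index sets $C_0 \subseteq C$ and $B_0 \subseteq B$ will be needed to make the collapse of the $r$-sum genuinely transparent.
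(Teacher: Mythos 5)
Your proposal matches the paper's proof essentially step for step: verify the identity on indicators of complete formulas $\Xi(x,B,C)$ by exhibiting the factored form of $\mathbf{1}_\Xi$ over $S^\flat_x \times \mathbb{S}_C \times S^*_x(C)$, observe that the $r$-sum collapses to the unique $r_\Xi$ compatible with the $(k-1)$-subset data of $\Xi$, and recover exactly the defining integral for $\mu_W(\Xi(x,B,C))$; then extend by linearity and dominated convergence through the Baire hierarchy exactly as in \cref{lemma:key}. Your write-up is a faithful rendering of the same argument, just with the extension steps spelled out more explicitly than the paper's ``by a similar line of reasoning.''
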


\begin{proof} By a similar line of reasoning provided in Lemma \ref{lemma:key} (i.e., linearity of integration and the dominated convergence theorem), it suffices to check the statement for indicator functions of complete formulas. 

Fix a complete formula
\[ 
\Xi(x,B,C)=
\bigwedge_{0 \leq t \leq k - 1}\, \bigwedge_{\substack{B_0 \subseteq B \\ |B_0| = t}}\, \bigwedge_{\substack{C_0 \subseteq C \\ |C_0| = k - 1 - t}} R^{\epsilon^{\Xi}(B_0,C_0)}(x,B_0,C_0), 
\] 
where $B \subseteq M$ and $C \subseteq \mathcal{U}$. Let $f(s) = \mathbf{1}_{\Xi}(s)$. Then we have that 
\begin{equation*}
    \int_{s \in S_{x}^{*}(MC)} f(s) d\mu_{W} = \mu_{W}(\Xi(x,B,C)). 
\end{equation*}

We also have that $\hat{f}(p,\bar{q},r) = 1$ if and only if $\Xi_{B}(x) \in p(x)$ for every non-empty $C_0 \subseteq C$ and non-empty $B_0 \subseteq B$ such that $|C_0| + |B_0| = k -1$, and $R^{\epsilon^{\Xi}(B_0,C_0)}(x,B_0,\bar{z}_{t}) \in q_{C_0}$ and 
$R^{\epsilon^{\Xi}(\EmptySet,C_0)}(x,C_0) \in s$
for all $C_0 \subseteq C$ such that $|C_0| = k -1$.

Let $\Xi_{\mathcal{U}}(x)$ be the formula $\bigwedge_{C_0 \in {C\choose k-1}}R^{\epsilon^{\Xi}(\EmptySet,C_0)}(x,C_0)$. Our argument just established that \[
 \hat{f}(p,\bar{q},r) = \mathbf{1}_{\Xi_{B}}(p)\mathbf{1}_{\Xi_{\mathcal{U}}}(r)\prod_{t = 1}^{k-2} \prod_{C_0 \in {C\choose t}} \prod_{B_0 \in {B\choose k -1 - t}} \mathbf{1}_{R^{\epsilon^{\Xi}(B_0,C_0)}(x,B_0,\bar{z}_{t})}(q_{C_0}).  
\]

Note that there is a unique $r' \in S_{x}^\ast(C)$ such that $\mathbf{1}_{\Xi_{\mathcal{U}}}(r') = 1$. Moreover, note that $r'$ is consistent with $\Xi$ and so 

   \[
    W^{\dagger}(p,\bar{q},r') =  \prod_{C_0 \in {C\choose k -1}} W^{\epsilon^{r'}(\EmptySet,C_0)}(p,\mathring{\mathbf{q}}_{[C_0]}) = \prod_{C_0 \in {C\choose k -1}} W^{\epsilon^{\Xi}(\EmptySet,C_0)}(p,\mathring{\mathbf{q}}_{[C_0]}).  
   \] 
Therefore we have that
\begin{equation*}
     \int_{p \in S^{\flat}_{x}}\int_{\bar{q} \in \mathbb{S}_{C}} \sum_{r \in S_x^\ast(C)} \hat{f}(p,\bar{q},r) W^{\dagger}(p,\bar{q},r)d\bar{\lambda}
\end{equation*}
\begin{equation*}
      = \int_{p \in S^{\flat}_{x}}\int_{\bar{q} \in \mathbb{S}_{C}} \sum_{r \in S_x^\ast(C)} 1_{\Xi_{B}}(p) \left[\prod_{t = 1}^{k-2} \prod_{C_0 \in {C\choose t}} \prod_{B_0 \in {B\choose k -1 - t}} \mathbf{1}_{R^{\epsilon(B_0,C_0)}(x,B_0,\bar{z}_{t})}(q_{C_0}) \right] W^{\Xi}(p,\bar{q},r')d\bar{\lambda},
\end{equation*}
which is precisely $\mu(\Xi(x,B,C))$. 
\end{proof}

\begin{lemma}\label{lemma:hyper-excellent} Suppose that $W$ is a Borel $k$-ary hypergraphon. Then $\mu_{W}$ is $M$-excellent. 
\end{lemma}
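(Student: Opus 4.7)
The plan is to follow the template of Lemma~\ref{lemma:excellent}. By Proposition~\ref{prop:adequate2}, $\mu_W$ is already $M$-adequate, so it remains only to show self-commutativity: $\mu_W(x)\otimes \mu_W(y) = \mu_W(y)\otimes \mu_W(x)$. Since $T_k$ has quantifier elimination and $\mu_W$ does not concentrate on points, it suffices to check this on conjunctions of hyperedge literals (literals involving equality contribute measure $0$ or $1$ to both products). So I may fix parameter sets $B\subseteq M$ and $C\subseteq \mathcal{U}\setminus M$ and consider a formula $\varphi(x,y)$ that decomposes into a formula $\gamma(x,BC)$ involving $x$ but not $y$, a formula $\rho(y,BC)$ involving $y$ but not $x$, and a collection of \emph{mixed} literals of the form $R^{\epsilon}(x,y,D_0)$ with $D_0\in \binom{BC}{k-2}$.

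Next, I would compute the fiber $F^{\varphi}_{\mu_W(x)}(q)$ for $q\in S^*_y(MC)$ by plugging a realizer $b\models q$ into Definition~\ref{def:measure2}. The result is an integral over $S^\flat_x\times \mathbb{S}_{C\cup\{b\}}$ whose integrand is $\mathbf{1}_{\gamma_B}(p)$ times indicator functions recording the lower-arity interactions of $x$ with subsets of $C\cup\{b\}$ (both those demanded by $\gamma$ and those contributed by the mixed literals, now read with $y$ replaced by $b$), multiplied by hypergraphon factors $W^{\epsilon}(p,\mathring{\mathbf{q}}_{[D'_0]})$ for $D'_0\in\binom{C\cup\{b\}}{k-1}$. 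All dependence on $b$ enters through $\tp^\flat(b/M)$ and through the scaffolding variables indexed by subsets of $C\cup\{b\}$ that contain $b$. Applying Lemma~\ref{lemma:key3} to integrate this fiber against $\mu_W(y)$ then expands $(\mu_W(x)\otimes \mu_W(y))(\varphi)$ as an iterated integral over $S^\flat_x\times S^\flat_y\times \mathbb{S}_C\times\mathbb{S}_{C\cup\{y\}}$ with respect to products of scaffolding measures $\bar{\lambda}$, with an explicit bounded integrand built from indicator functions and $W^\epsilon$-factors.

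The symmetric computation for $(\mu_W(y)\otimes \mu_W(x))(\varphi)$ produces the analogous iterated integral with the roles of $x$ and $y$ exchanged. Since all scaffolding spaces are Polish and the integrand is bounded, Fubini's theorem lets me reorder the integrations freely. It then remains to check that the two integrands agree pointwise: the indicator-function parts match because the syntactic data of $\varphi$ is symmetric in $x$ and $y$ (the mixed literals $R^{\epsilon}(x,y,D_0)$ are invariant under the swap since $R$ is symmetric in its arguments), while the hypergraphon factors match after invoking the $\Sym(k)$-invariance of $W$, which identifies any factor attached to a $(k-1)$-subset of $C\cup\{y\}$ containing $y$ with the factor attached to the corresponding $(k-1)$-subset of $C\cup\{x\}$ containing $x$ under the exchange of labels.

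The main obstacle will be the bookkeeping in matching up the $\mathring{\mathbf{q}}_{[D'_0]}$-arguments of $W$ between the two orderings. Each such argument mixes constant $\tp^\flat$-data of the parameters in $C$ with scaffolding $q$-variables from $\mathbb{S}_{C\cup\{x\}}$ or $\mathbb{S}_{C\cup\{y\}}$, and when $x$ and $y$ swap roles the partition into ``constant'' versus ``variable'' data rearranges itself across all $(k-1)$-subsets simultaneously. Verifying that the relabeling is coherent, so that $\Sym(k)$-invariance of $W$ delivers a genuine pointwise identification of the two integrands rather than two superficially similar expressions, is the delicate step; once it is in hand, self-commutativity follows at once.
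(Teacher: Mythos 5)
Your plan is essentially the paper's proof of Lemma~\ref{lemma:hyper-excellent}: reduce via Proposition~\ref{prop:adequate2} to self-commutativity, use quantifier elimination and the fact that $\mu_W$ does not concentrate on points to reduce to complete formulas $\Xi(x,y,B,C)$, decompose $\Xi$ into parts involving only $x$, only $y$, and mixed literals, compute the fiber functions via a realizer, apply Lemma~\ref{lemma:key3}, and finish with Fubini and the $\Sym(k)$-invariance of $W$. The overall strategy and the choice of key lemma are correct.

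However, you explicitly defer the verification you call the "delicate step" --- matching the $\mathring{\mathbf{q}}$-arguments of $W$ across the two orderings --- and that deferred step is precisely the mathematical content of this lemma. The paper carries it out by first observing that $\mathbb{S}_{Cd}$ depends only on $|C|+1$ (so one may reindex by a fresh symbol $*$ and treat the realizer $d$ as purely formal), then noting that the only genuine dependence of the fiber function on the realizer is through $\tp^\flat(d/MC)$, and then explicitly unpacking the argument of the mixed $W$-factor after identifying $s=(p,\bar{q},r)$ under the decomposition $S^*_x(MC)\cong S^\flat_x\times\mathbb{S}_C\times S^*_x(C)$: namely $\mathring{\mathbf{u}}_{[C_0\cup\{d\}]}=([\tp^\flat(C_0)],p,\bar{q}_{[C_0]},\bar{u}_{[C_0\cup\{d\}]})$. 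One must further observe that exactly one $r\in S^*_y(C)$ contributes (determined by $\theta_C$), that $\bar{u}_{[C_0\cup\{*\}]}$ splits as $(\bar{u}_{[C_0]},(u_{D\cup\{*\}})_{\EmptySet\subsetneq D\subsetneq C_0})$, and then perform a change of variable sending $u_{D\cup\{*\}}$ to $q_{D\cup\{*\}}$ before invoking $\Sym(k)$-invariance. You correctly anticipate that this is where the care is needed and what invariance must deliver, but because your writeup stops at a promissory note exactly there, what you have is a correct plan rather than a proof; carrying out the indexing argument above is what is required to close the gap.
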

\begin{proof} Let $B \subseteq M$ and $C \subseteq \mathcal{U}\backslash M$. For simplicity of notation, we let $\mu_{W} = \mu$. Fix a complete formula $\Xi(x,y,B,C)$ in $xy$ over $BC$. Then $\Xi$ can be written as $\Xi(x,y,B,C) = \psi(x) \wedge \theta(y) \wedge \tau(x,y)$ where
\begin{enumerate}
\item $\psi(x)$ is a complete formula in $x$ over $BC$. We let $\psi_{B}(x)$ and $\psi_{C}(x)$ be the complete formulas restricted to formulas from $B$ and $C$ 
respectively. We let $\psi_{BC}(x)$ be the restriction to the literals which contain parameters non-trivially from both $B$ and $C$. 
\item $\theta(y)$ is a complete formula in $y$ over $BC$. We let $\theta_{B}(y)$ and $\theta_{C}(y)$ be the complete formulas restricted to formulas from $B$ and $C$ 
respectively. We let $\theta_{BC}(y)$ be the restriction to the literals which contain parameters non-trivially from both $B$ and $C$. 
\item Let 
\begin{align*}
    &\tau_1(x,y) := \bigwedge_{B_0 \in {B \choose k- 2}} R^{\epsilon^{\Xi}(B_0,\EmptySet)}(x,y,B_0), \\ 
    &\tau_2(x,y) := \bigwedge_{t=1}^{k-3} \bigwedge_{C_0 \in {C\choose t}} \bigwedge_{B_0 \in {B \choose k - t - 3}} R^{\epsilon^{\Xi}(B_0,C_0)}(x,y,B_0,C_0), \\
    &\tau_3(x,y) := \bigwedge_{C_0 \in {C \choose k- 2}} R^{\epsilon^{\Xi}(\EmptySet,C_0)}(x,y,C_0),
\end{align*}
and let $\tau(x,y) = \tau_1(x,y) \wedge \tau_2(x,y) \wedge \tau_3(x,y)$.
\end{enumerate}
It suffices to show that $(\mu(x) \otimes \mu(y))(\Xi(x,y,B,C)) = (\mu(y) \otimes \mu(x))(\Xi(x,y,B,C))$. We compute that 
\begin{align*}
    (\mu_{x} \otimes \mu_{y})(\Xi(x,y,B,C)) = \int_{S_{x}^{*}(MC)} F_{\mu_{x}}^{\Xi(x,y,B,C)} d\mu_{y}.
\end{align*}
Fix $s \in S_{x}^{*}(MC)$ and let $d \models s$. Then 

\begin{align*}
F_{\mu_{x}}^{\Xi(x,y,B,C)}(s) &= \mu(\Xi(x,d,B,C)) \\
&= \mathbf{1}_{\theta_{B} \wedge \theta_{BC} \wedge \theta_{C}}(s) \int_{v \in S_{x}^{\flat}} \int_{\bar{u} \in \mathbb{S}_{Cd}} \mathbf{1}_{\psi_{B}}(v) \prod_{t = 1}^{k-2} \prod_{C_0 \in {C\choose t}} \prod_{B_0 \in {B\choose k - 1 -t}} \mathbf{1}_{R^{\epsilon^{\psi}(B_0,C_0)}(x,B_0,\bar{z}_t)}(u_{C_0}) \\ 
&\hspace*{30pt}
\prod_{t =0 }^{k-3} \prod_{C_0 \in {C\choose t}}\prod_{B_0 \in {B \choose k - 2 - t}} \mathbf{1}_{R^{\epsilon^{\Xi}(C_0,B_0)}(x,B_0,\bar{z}_{t})}(u_{C_0 \cup \{d\}}) \\
&\hspace*{20pt}
\prod_{C_0 \in {C \choose k-1} } W^{\epsilon^{\psi}(\EmptySet,C_0)}(v,\mathring{\mathbf{u}}_{[C_0]})  \prod_{C_0 \in {C \choose k-2} } W^{\epsilon^{\Xi}(\EmptySet,C_0 \cup \{d\})}(v,\mathring{\mathbf{u}}_{[C_0 \cup \{d\}]}) d\bar{\lambda}.
\end{align*}
Notice that in the term above, the space $\mathbb{S}_{Cd}$ does not depend on $d$, only the cardinality of the set $Cd$. The usage of $d$ in the \emph{virtual types} of the form $u_{C_0 \cup \{d\}}$ is simply for indexing.  Moreover, we have that the only non-trivial usage of $d$ occurs in the term $\prod_{C_0 \in {C\choose k -2}}W^{\epsilon^{\Xi}(\EmptySet,C_0)}(v,\mathring{\mathbf{u}}_{[C_0 \cup \{d\}]})$. However, by definition, the term $\mathring{\mathbf{u}}_{[C_0 \cup \{d\}]}$ depends only on the type of $d$ over $MC$ and so the entire term above is a function of $s$. Hence, in the following computation, the indexing of $\mathbb{S}_{Cd}$ can be replaced by $\mathbb{S}_{C\cup \{*\}}$. Finally notice that if we associated $S_{x}^{*}(MC)$ with $S_{x}^{\flat} \times \mathbb{S}_{C} \times S_{x}^{*}(C)$ as in Lemma \ref{lemma:key3} and let $s = (p,\bar{q},r)$, then 
\begin{align*}
    \mathring{\mathbf{u}}_{[C_0 \cup \{d\}]} &= ([\tp^{\flat}(C_0 \cup \{d\})], \bar{u}_{[C_0 \cup \{d\}]}) \\ 
    &= (\tp^{\flat}(C_0'/M)_{\EmptySet \neq C_0' \subseteq C_0}, \tp^{\flat}(d),(\tp^{\flat}(D \cup \{d\}/M)_{\EmptySet \neq D \subseteq C_0}), \bar{u}_{[C_0 \cup \{d\}]}) \\ 
    &=  ([\tp^{\flat}(C_0)],p,\bar{q}_{[C_0]}, \bar{u}_{[C_0 \cup \{d\}]})).
\end{align*}

Let $F = F_{\mu_{x}}^{\Xi(x,y,B,C)}$. Then by Lemma \ref{lemma:key3}, we see that 
\begin{align*}
(\mu_{x} \otimes \mu_{y})(\Xi(x,y,B,C)) &= \int_{S_{x}^{*}(MC)} F_{\mu_{x}}^{\Xi(x,y,B,C)} d\mu_{y} \\ 
&=  
\int_{p \in S^{\flat}_{x}}\int_{\bar{q} \in \mathbb{S}_{C}} \sum_{r \in S_x^\ast(C)} \widehat{F}(p,\bar{q},r) W^{\dagger}(p,\bar{q},r)d\bar{\lambda} \\ 
&\overset{(*)}{=} \int_{p \in S_{x}^{\flat}} \int_{\bar{q} \in \mathbb{S}_{C}} 
\int_{v \in S_{x}^{\flat}}
\int_{ \bar{u} \in \mathbb{S}_{C\cup \{*\}}} \mathbf{1}_{\theta_{B}}(p) \cdot \mathbf{1}_{\psi_{B}}(v) \\
&\hspace*{30pt}
\prod_{t = 1}^{k-2} \prod_{C_0 \in {C \choose t}} \prod_{B_0 \in {B \choose k - 1 - t}} \mathbf{1}_{R^{\epsilon^{\theta}}(x,B_0,\bar{z}_{t})}(q_{C}) \cdot \mathbf{1}_{R^{\epsilon^{\psi}}(x,B_0,\bar{z}_{t})}(u_{C}) \\ 
&\hspace*{20pt}
\prod_{C_0 \in {C\choose k-1 }} W^{\epsilon^{\psi}(\EmptySet,C_0)}(v,\mathring{\mathbf{u}}_{[C_0]}) \cdot  W^{\epsilon^{\theta}(\EmptySet,C_0)}(p,\mathring{\mathbf{q}}_{[C_0]}) \\ 
&\hspace*{31pt}
\prod_{t = 0}^{k-3} \prod_{C_0 \in {C \choose t}} \prod_{B_0 \in {B \choose k - 2 - t}} 1_{R^{\epsilon^{\Xi}}(x,B_0,\bar{z}_{t})}(u_{C_0 \cup \{*\}}) \\ 
&\hspace*{21pt}
\prod_{C_0 \in {C\choose k - 2}} W^{\epsilon^{\Xi}(\EmptySet,C_0 \cup \{*\})}(v,p, [\tp^{\flat}(C_0)],\bar{q}_{[C_0]}, \bar{u}_{[C_0 \cup \{*\}]}))d\bar{\lambda}. 
\end{align*}
In the above term, the first three lines contain data from $x$ and $y$ separately (and are clearly symmetric). Notice that there exists a unique $r \in S_{x}^{*}(C)$ such that the quantity is non-zero and this is completely determined by $\theta_{C}$. Hence, the term $W^{\dagger}(p,\bar{q},r)$ becomes $\prod_{C_0 \in {C\choose k-1 }} W^{\epsilon^{\theta}(\EmptySet,C_0)}(p,\mathring{\mathbf{q}}_{[C_0]})$.

\noindent Notice that for $C_0 \in {C \choose k-2}$, the tuple $\bar{u}_{[C_0 \cup \{*\}]}$ can be written as 
\begin{equation*}
    (\bar{u}_{[C_0]},(u_{D \cup \{*\}})_{\EmptySet \subseteq D \subseteq C_0}).
\end{equation*}
Hence, up to permutation of variables, the final term becomes 
\begin{equation*}
    \prod_{C_0 \in {C\choose k - 2}} W^{\epsilon^{\Xi}(\EmptySet,C_0 \cup \{*\})}(v,p, [\tp^{\flat}(C_0)],\bar{q}_{[C_0]}, \bar{u}_{[C_0]},(u_{D \cup \{*\}})_{\EmptySet \subseteq D \subseteq C_0}). 
\end{equation*}
Computing $(\mu(y) \otimes \mu(x))(\Xi(x,y,B,C))$ yields a similar result. An application of Fubini's theorem, a change of variable from $u_{D \cup \{*\}}$ to $q_{D \cup \{*\}}$, and the underlying symmetry of our hypergraphon shows that these two terms are equal. 
\end{proof}

\begin{definition} For each $m \geq k-1$, consider the map $\gamma_{m} \colon S_{x_1,...,x_m}^{+}(M) \to P_{m}$ given by
\begin{equation*}
    \gamma_{m}(s) := (\tp_{I_0}^{\flat}(s))_{I_0 \in {[m]\choose t \leq k -1}}. 
\end{equation*}
Notice that this map is a surjective homomorphism between compact Hausdorff spaces and thus a quotient map. 
\end{definition}

\begin{lemma}\label{lemma:hyper-int} Suppose that $W$ is a Borel $k$-ary hypergraphon. For $n \geq k$, suppose that $\hat{h}\colon P_{n} \to [0,1]$ is a Borel measurable map. We let $h = \hat{h} \circ \gamma_{n}$. Let $G$ be a hypergraph on the vertex set $\{1,...n\}$ and $\varphi_{G}(x_1,...,x_n)$ be the corresponding complete hypergraph formula. Then
\begin{align*}
    &\int_{q \in S^{+}_{x_1,...,x_{n}}(M)} \mathbf{1}_{\varphi_{G}(x_1,...,x_{n})}(q) \cdot h(q) d\mu_{W}^{(n)} = \int_{\bar{t} \in P_{n}} \hat{h}(\bar{t}) \prod_{I \in {[n] \choose k}}  W^{\epsilon^{G}(I)}(\bar{t}_{I}) d\lambda_{n}, 
\end{align*}
where $\lambda_{n} = \prod_{I \in {[n]\choose t \leq k-1}} \lambda_{I}$.
\end{lemma}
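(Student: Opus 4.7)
The plan is to prove the identity by induction on $n \geq k$, closely following the structure of the graphon analogue (Lemma~\ref{lemma:integral}) but with careful bookkeeping for the lower-arity data packaged in $P_n$. The induction is carried out by first establishing the identity when $\hat{h}$ is the indicator function of a basic clopen in $P_n$ (i.e.\ a singleton in each $S_I^\flat$ factor), then extending by linearity to finite convex combinations, then to arbitrary continuous $\hat{h}$ by uniform approximation (since each $S_I^\flat$ is a Stone space and hence so is $P_n$), and finally to all Borel $\hat{h}$ by climbing the Baire hierarchy via the dominated convergence theorem. This approximation scheme is the same as in the graphon case and causes no new trouble.

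For the base case $n = k$, note that $\binom{[k]}{k} = \{[k]\}$, so the right-hand side contains a single hypergraphon factor $W^{\epsilon^G([k])}(\bar{t}_{[k]})$, where $\bar{t}_{[k]}$ records the lower-arity data over all proper non-empty subsets of $[k]$. Decompose $\mu_W^{(k)} = \mu_W(x_k) \otimes \mu_W^{(k-1)}(x_1,\ldots,x_{k-1})$ and, for $q \in S^+_{\bar{x}_{k-1}}(M)$ with any realization $\bar{d} \models q$, compute the fiber $F_{\mu_W(x_k)}^{\varphi_G \wedge h}(q)$ by unwinding Definition~\ref{def:measure2} applied to the complete formula in $x_k$ over $M\bar{d}$. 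The resulting expression is an integral against $\lambda$ on $S^\flat_{x_k} \times \mathbb{S}_{\bar{d}}$ whose sole hypergraphon factor is $W^{\epsilon^G([k])}$ evaluated on the appropriate input. Applied to the $\lambda$-measure $\lambda_{k-1}$ via the inductive step of the approximation scheme (or a direct Fubini rearrangement, since the integrand factors nicely), this produces the required right-hand side.

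For the inductive step, assume the identity for $n - 1 \geq k$, and let $G_{n-1}$ denote the restriction of $G$ to $[n-1]$, so that $\varphi_G(\bar{x}) = \varphi_{G_{n-1}}(\bar{x}_{n-1}) \wedge \chi_n(x_n;\bar{x}_{n-1})$, where $\chi_n$ collects the $k$-hyperedge literals of $G$ involving index $n$. Writing $\mu_W^{(n)} = \mu_W(x_n) \otimes \mu_W^{(n-1)}$ and computing the fiber function at $q \in S^+_{\bar{x}_{n-1}}(M)$ with realization $\bar{d}$ produces, via Definition~\ref{def:measure2} (or equivalently Lemma~\ref{lemma:key3} applied to the function $\mathbf{1}_{\chi_n \wedge h}$ with $C = \bar{d}$), a product $\mathbf{1}_{\varphi_{G_{n-1}}}(q)\cdot h_0(q)$, where $h_0 = \hat{h}_0 \circ \gamma_{n-1}$ for a Borel $\hat{h}_0\colon P_{n-1} \to [0,1]$. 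The function $\hat{h}_0$ absorbs, via an internal integration against $\bar{\lambda}$ on $S^\flat_{x_n} \times \mathbb{S}_{\bar{d}}$, the hypergraphon factors $W^{\epsilon^G(I)}$ for $I \in \binom{[n]}{k}$ with $n \in I$ as well as the evaluation of $\hat{h}$. Borel-measurability of $\hat{h}_0$ follows from Fact~\ref{fact:kechris}. Applying the inductive hypothesis to $\hat{h}_0$ gives the identity for $n$ after re-indexing the inner integration variables as the components of $P_n$ corresponding to subsets $J \subseteq [n]$ with $n \in J$ and invoking Fubini.

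The main obstacle is the notational bookkeeping in the inductive step. Hypergraphon factors $W^{\epsilon^G(I)}(\bar{t}_I)$ for $I \in \binom{[n]}{k}$ split into two kinds: those with $n \notin I$, which are produced by the inductive hypothesis applied to $\hat{h}_0$, and those with $n \in I$, which must be generated by the internal integration inside the fiber function. Matching these up requires identifying the auxiliary integration variables $u_{C_0}$ of Definition~\ref{def:measure2} (indexed by subsets $C_0$ of the realization $\bar{d}$ of $q$) with the components of the $P_n$-factorization indexed by subsets $J \subseteq [n]$ containing $n$, and similarly identifying the types $[\tp^\flat(C_0)]$ and $\bar{q}_{[C_0]}$ appearing there with the appropriate $P_{n-1}$-components of $q$. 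Once this identification is carried out, symmetry of $W$ under $\Sym(k)$ and an application of Fubini's theorem complete the reorganization.
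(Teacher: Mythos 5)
Your proposal is correct and follows essentially the same route as the paper's proof: induction on $n \geq k$, reduction to indicator functions of basic clopens followed by the standard ladder (linearity, uniform approximation on the Stone space $P_n$, dominated convergence up the Baire hierarchy), computation of the fiber function $F_{\mu_W(x_n)}^{\varphi_G \wedge \psi}$ via Definition~\ref{def:measure2} to produce $\mathbf{1}_{\varphi_{G_{n-1}}} \cdot (\hat{h}_0 \circ \gamma_{n-1})$ with $\hat{h}_0$ Borel by Fact~\ref{fact:kechris}, and then the identification of $P_{n-1} \times S^\flat_{x_n} \times \mathbb{S}_{n-1}$ with $P_n$ together with a Fubini rearrangement. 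The only cosmetic discrepancy is that you refer to auxiliary variables $u_{C_0}$, whereas Definition~\ref{def:measure2} uses $q_{C_0}$; this does not affect the argument.
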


\begin{proof} For notational purposes, we let $\mu_{W} = \mu$. We prove the statement by induction on $n$. By a similar argument to Lemma \ref{lemma:integral}, it suffices to consider maps of the form 
\begin{equation*}
    \hat{h}(\bar{t}) := \prod_{I_0 \in {[n] \choose t \leq k -1 }}  \mathbf{1}_{\psi_{I_0}(\bar{x}_{I_0})}(\bar{t}_{I_0}), 
\end{equation*}
where $\bar{x}_{I_0} = \{x_i\}_{i \in I_0}$
and $\psi_{I_0}(\bar{x}_{I_0}) = \bigwedge_{B_0 \in {B \choose k-t}} R^{\epsilon^{\psi_{I_0}}(B_0,I_0)}(\bar{x}_{I_0},B_0)$. 
Then $h = \mathbf{1}_{\psi}$ where
\begin{equation*}
    \psi(x_1,...,x_n) := \bigwedge_{t = 1}^{k-1} \bigwedge_{I_0 \in {[n] \choose t }} \bigwedge_{B_0 \in {B \choose k-t}} R^{\epsilon^{\psi}(B_0,I_0)}(\bar{x}_{I_0},B_0). 
\end{equation*}
We let $\psi_{n-1}(\bar{x})$ be the restriction of $\psi$ to the variables $x_1,...,x_{n-1}$, let $\psi_{n}(x_{n})$ be the restriction of $\psi$ to the variable $x_{n}$, and let $\psi'(\bar{x})$ be the restriction to the literals in variables $x_1,...,x_n$ such that every literal contains at least two free variables and one of those variables is $x_{n}$. By definition,  
\begin{equation*}
    \psi(x_1,...,x_n) = \psi_{n}(x_n) \wedge \psi'(\bar{x}) \wedge  \psi_{n-1}(\bar{x}). 
\end{equation*}

\vspace{.1in}

\noindent \textbf{Base Case:} Suppose that $n = k$. Then $\varphi_{G}(x_1,...,x_k) = R(x_1,...,x_k)$ or $\varphi_{G}(x_1,...,x_k) = \neg R(x_1,...,x_k)$. Without loss of generality, suppose that $\varphi_{G}(x_1,...,x_k) = R(x_1,...,x_k)$. Then 
\begin{align*}
    \int_{q \in S^{+}_{x_1,...,x_k}(M)} \mathbf{1}_{\varphi_{G}(x_1,...,x_k)} (q) \cdot h(q) d\mu^{(n)} &= \mu^{(n)}(\varphi_{G}(x_1,...,x_k) \wedge \psi(x_1,...,x_k)) \\  
    &= (\mu_{x_k} \otimes \mu_{\bar{x}}^{(k-1)})(R(x_1,..,x_k) \wedge \psi(x_1,...,x_k))  \\
    &= \int_{s \in S^{+}_{x_1,..,x_{k-1}}(M)} F_{\mu_{x_k}}^{R(x_k;\bar{x}) \wedge \psi(x_k;\bar{x})} d\mu_{\bar{x}}^{(k-1)}. 
\end{align*}
Fix $s \in S^{+}_{x_1,...,x_{k-1}}(M)$ and $C \subseteq \mathcal{U}$ such that $C \models s$. Then
\begin{align*}
    F_{\mu_{x_k}}^{R(x_k;\bar{x}) \wedge \psi(x_k;\bar{x})}(s) &= \mu(R(x_{k};C) \wedge \psi(x_{k},C)) \\
    &= \mathbf{1}_{\psi_{k-1}(\bar{x})}(s) \cdot \mu(R(x_{k},C) \wedge \psi_{k}(x_k) \wedge \psi'(x_{k},C)).
\end{align*}
Now, notice that $R(x_{k},C) \wedge \psi_{k}(x_k) \wedge \psi'(x_{k},C)$ is a complete formula, and so
\begin{align*}
    &\mu(R(x_{k},C) \wedge \psi_{k}(x_k) \wedge \psi'(x_{k},C)) \\ 
    &\hspace*{30pt}=  \int_{p \in S_{x}^{\flat}} \int_{\bar{q} \in \mathbb{S}_{C}} \mathbf{1}_{\psi_{k}}(p) \prod_{t=1}^{k-2}\prod_{C_0 \in  {C \choose t}}\prod_{B_0 \in  {B \choose k-1-t}} \mathbf{1}_{R^{\epsilon^{\psi}(B_0,C_0)}(x,B_0,\bar{z}_t)}(q_{C_0}) W(p,[\tp^{\flat}(C)], \bar{q}_{[C]}) d\bar{\lambda} \\
    &\hspace*{30pt}=  \int_{p \in S_{x}^{\flat}} \int_{\bar{q} \in \mathbb{S}_{k-1}} \mathbf{1}_{\psi_{k}}(p) \prod_{t=1}^{k-2}\prod_{I_0 \in  {[k-1] \choose t}}\prod_{B_0 \in  {B \choose k-1-t}} \mathbf{1}_{R^{\epsilon^{\psi}(B_0,I_0)}(x,B_0,\bar{z}_t)}(q_{I_0}) W(p,\gamma_{k-1}(s),\bar{q}_{[k-1]}) d\bar{\lambda}, 
\end{align*}
where $\bar{\lambda}$ is the appropriate measure. Notice that the expression above is a function of $s$ and does not depend on the realization $C$. It only depends on the image of $s$ under our map $\gamma_{k-1}$. Moreover, we note that the value of $\mathbf{1}_{\psi_{k-1}(\bar{x})}(s)$ also only depends on the image of $\gamma_{k-1}(s)$. We now compute: 
\begin{align*}
    &\int_{q \in S^{+}_{x_1,...,x_k}(M)} \mathbf{1}_{\varphi_{G}(x_1,...,x_n)}(q) \cdot h(q) d\mu^{(k)}\\
    &\hspace*{30pt}=
    \int_{s \in S^{+}_{x_1,...,x_{k-1}}(M)} F_{\mu_{x_k}}^{R(x_k;\bar{x}) \wedge \psi(x_k;\bar{x})} d\mu^{(k-1)} \\
    &\hspace*{30pt}=
    \int_{s \in S^{+}_{x_1,...,x_{k-1}}(M)} \mathbf{1}_{\psi_{k-1}(\bar{x})}(s) \int_{p \in S_{x}^{\flat}} 
    \int_{\bar{q} \in \mathbb{S}_{k-1}}  \mathbf{1}_{\psi_{k}}(p) \prod_{t=1}^{k-2}\prod_{I_0 \in  {[k-1] \choose t}}\prod_{B_0 \in  {B \choose k-1-t}} \\
    &\hspace*{60pt}
    \mathbf{1}_{R^{\epsilon^{\psi}(B_0,I_0)}(x,B_0,\bar{z}_t)}(q_{I_0}) W(p,\gamma_{k-1}(s),\bar{q}_{[k-1]}) d\bar{\lambda} d\mu^{(k-1)}\\
    &\hspace*{30pt}\overset{(a)}{=}
    \int_{\bar{u} \in P_{k-1}} \int_{p \in S_{x_{k}}^{\flat}} \int_{\bar{q} \in \mathbb{S}_{k-1}} 
    \left( \prod_{I_0 \in {[k-1]\choose t \leq k -1}} \mathbf{1}_{\psi_{I_0}(\bar{x}_{I_0})}(u_{I_0}) \right) \cdot [\mathbf{1}_{\psi_{k}}(p)] \prod_{t=1}^{k-2}\prod_{I_0 \in  {[k-1] \choose t}}\prod_{B_0 \in  {B \choose k-1-t}} \\
    &\hspace*{60pt}
\mathbf{1}_{R^{\epsilon^{\psi}(B_0,I_0)}(x,B_0,\bar{z}_t)}(q_{I_0}) W(p,\bar{u},\bar{q}_{[k-1]}) d\bar{\lambda}' d(\gamma_{k-1})_{*}(\mu^{(k-1)})\\
    &\hspace*{30pt}\overset{(b)}{=}
\int_{(\bar{u},p,\bar{q}) \in P_{k-1} \times S_{x_k}^{\flat} \times \mathbb{S}_{k-1}} \hat{h}(p,\bar{u},\bar{q}) W(p,\bar{u},\bar{q}_{[k-1]}) d(\bar{\lambda}' \times (\gamma_{k-1})_*(\mu^{(k-1)}))
\\
    &\hspace*{30pt}\overset{(c)}{=}
    \int_{\bar{t} \in P_{n}} \hat{h}(\bar{t}) W(\bar{t})d\lambda_{n}.
\end{align*}
We provide some justifications: 
\begin{enumerate}[label=($\alph*$)]
    \item This equation follows by realizing the function in the integrand factors through the quotient map $\gamma_{k-1}$. This is explained in the paragraph before the computation. Hence we may integrate with respect to the pushforward measure $\gamma_{k-1}(\mu^{(k-1)})$ over $P_{k-1}$. We also renamed the variable $x$ to $x_{k}$, which thus changes $\bar{\lambda}$ to $\bar{\lambda}'$ where $\bar{\lambda}' = \prod_{I_0 \in{ [k-1]\choose t \leq k - 2}} \lambda_{I_0 \cup \{k\}}$.
    \item Notice that the space $P_{k-1} \times S_{x_{k}}^{\flat} \times \mathbb{S}_{k-1}$ is canonically identified with $P_{n}$. Intuitively, the unary data is given by $\{u_{\{1\}},...,u_{\{k-1\}},p\}$, the higher-arity data connecting elements of $\{u_{\{i\}}\}_{i \leq k-1}$ is already present in the tuple $\bar{u}$, and the higher-arity data connecting $p$ with $\{u_{\{i\}}\}_{i \in I_0}$ is precisely given by $q_{I_0}$. 
    \item It is clear that $(\gamma_{k-1})_{*}(\mu^{(k-1)})= \prod_{I_0 \in {[k-1] \choose t \leq k-1}} \lambda_{I_0}$, and so $\bar{\lambda}' \times (\gamma_{k-1})_*(\mu^{(k-1)})$ gives the appropriate measure on $P_{k}$. 
\end{enumerate}

\noindent \textbf{Induction Step:} Suppose the statement holds for $n-1$. We prove the statement for $n$. Let $\varphi_{G_{n-1}}(x_1,...,x_{n-1})$ be the hypergraph formula corresponding to the restriction of $G$ to the vertex set $\{1,...,n\}$. In other words, it is the restriction of the formula $\varphi_{G}(x_1,...,x_n)$ to the variables $x_1,...,x_{n-1}$. Define the function
\begin{align*}
    \widehat{h_0}(\bar{t}) = \prod_{I_0 \in {[n-1]\choose t \leq k-1}} \mathbf{1}_{\psi_{I_0}(\bar{x}_{I_0})}(t_{I_0}) \int_{p \in S_{x_{n}}^{\flat}} \int_{\bar{q} \in \mathbb{S}_{n-1}} &\mathbf{1}_{\psi_{n}}(p) \prod_{t=1}^{k-2} \prod_{L_0 \in {[n-1] \choose t}} \prod_{B_0 \in {B \choose k - 1 -t}} \mathbf{1}_{R^{\epsilon^{\psi}(B_0,L_0)}}(q_{L_0})  \\
    &\prod_{J_0 \in {[n-1] \choose k -1}} W^{\epsilon^{G}(J_0 \cup \{n\})}(p,\mathring{\mathbf{t}}_{[J_0]},\bar{q}_{[J_0]})d\bar{\lambda},
\end{align*}
where $\mathring{\mathbf{t}}_{[J_0]} = (t_{I_0})_{\EmptySet \neq I_0 \subseteq J_0}$. If $J_0 \in {[n]\choose k-1}$ and $s \in S_{x_1,...,x_n}^{+}(M)$, then we let $[\tp^{\flat}_{J_0}(s)] = (\tp_{L_0}^{\flat}(s))_{\EmptySet \neq L_0 \subseteq J_0}$. Notice that if $s \in S_{x_1,...,x_{n-1}}^{+}(M)$ and $C \models s$, then
\begin{align*}
    F_{\mu_{x_{n}}}^{\varphi_{G}(x_n,\bar{x}) \wedge \psi(x_n;\bar{x})}(s)   
    &= \mu_{x_n}(\varphi_{G}(x_n,C) \wedge \psi(x_n,C)) \\
    &= \mathbf{1}_{G_{n-1}(\bar{x})}(s) \cdot \mathbf{1}_{\psi_{n-1}(\bar{x})}(s) \cdot \int_{p \in S_{x_n}^{\flat}} \int_{\bar{q} \in \mathbb{S}_{C}} \mathbf{1}_{\psi_{n}}(p) \prod_{t-1}^{k-2} \prod_{C_0 \in {C\choose t}} \prod_{B_0 \in {B\choose k-t-1}} \mathbf{1}_{R^{\epsilon^{\psi}(B_0,C_0)}}(q_{C_0})  \\
    &\hspace*{30pt}
     \prod_{D_0 \in {C \choose k-1}} W^{\epsilon(\EmptySet,D_0)}(p,[\tp^{\flat}(D_0)], \bar{q}_{[D_0]})d\bar{\lambda} \\ 
    &= \mathbf{1}_{G_{n-1}(\bar{x})}(s) \cdot \mathbf{1}_{\psi_{n-1}(\bar{x})}(s) \cdot \int_{p \in S_{x_n}^{\flat}} \int_{\bar{q} \in \mathbb{S}_{n-1}} \mathbf{1}_{\psi_{n}}(p) \prod_{t-1}^{k-2} \prod_{I_0 \in {[n-1]\choose t}} \prod_{B_0 \in {B\choose k-t-1}} \mathbf{1}_{R^{\epsilon^{\psi}(B_0,I_0)}}(q_{I_0})  \\
    &\hspace*{30pt}
     \prod_{J_0 \in {[n-1] \choose k-1}} W^{\epsilon^{G}(J_0 \cup \{n\})}(p, [\tp_{J_0}^{\flat}(s)],
     \mathring{\mathbf{q}}_{[J_0]})d\bar{\lambda} \\ 
    &=\mathbf{1}_{G_{n-1}(x_1,...,x_{n-1})}(s) \cdot (\widehat{h_0} \circ \gamma_{n-1})(s). 
\end{align*}
Setting $h_0 = (\widehat{h_0} \circ \gamma_{n-1})$, we have the following sequence of equations.
\begin{align*}
    \int_{q \in S^{+}_{x_1,...,x_n}(M)} \mathbf{1}_{\varphi_{G}(x_1,...,x_{n})}(q) \cdot h(q) d\mu^{(n)}   
    &= \mu^{(n)}(\varphi_{G}(\bar{x}) \wedge \psi(\bar{x}))  \\
    &= \int_{q \in S^{+}_{x_1,...,x_{n-1}}(M)} F_{\mu_{x_n}}^{\varphi_{G}(x_n;\bar{x}) \wedge \psi(x_n;\bar{x})}(q) d\mu^{(n-1)}  \\
    &= \int_{q \in S^{+}_{x_1,...x_{n-1}}(M)} \mathbf{1}_{G_{n-1}(\bar{x})}(q) h_0(q) d\mu^{(n-1)}  \\
    &\overset{(*)}{=} \int_{\bar{t} \in P_{n-1}} \widehat{h_0}(\bar{t}) \prod_{I \in {[n-1] \choose k - 1}} W^{\epsilon^{G_{n-1}(I)}}(\mathring{\mathbf{t}}_{[I]}) d\lambda_{n-1} \\ 
    &\overset{(**)}{=} \int_{\bar{t} \in P_{n}} \hat{h}(\bar{t}) \prod_{I \in {n\choose k-1}} W^{\epsilon^{G}(I)}(\mathring{\mathbf{t}}_{[I]}) d\lambda_{n}.
\end{align*}
Equation $(*)$ is an application of our induction hypothesis. Equation $(**)$ follows from unpacking the function $\hat{h}_0$ and reorganizing the terms. 
\end{proof}

\begin{theorem}\label{theorem:main} Let $W$ be a Borel $k$-ary hypergraphon and $g\colon S^{+}_{\mathbf{x}}(\mathcal{U}) \to \str_{\mathcal{L}}$. Then $g_{*}(\kmpow{\mu_{W}}) = \mathbb{G}(\mathbb{N}, W)$. 
\end{theorem}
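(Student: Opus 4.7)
The plan is to mirror the proof of \cref{theorem:2main}, replacing \cref{lemma:integral} with its hypergraph analogue \cref{lemma:hyper-int}. By \cref{lemma:hyper-excellent}, the measure $\mu_W$ is $M$-excellent, and from \cref{def:measure2} it is clear that $\mu_W$ does not concentrate on points. Therefore \cref{fact:excellent} yields that $g_*(\kmpow{\mu_W})$ is $\Sym(\mathbb{N})$-invariant. The target measure $\mathbb{G}(\mathbb{N},W)$ is also $\Sym(\mathbb{N})$-invariant. Since both measures are fully determined by their values on the cylinder sets $\llbracket \varphi_G(1,\ldots,n)\rrbracket$ (ranging over $n \geq k$ and $k$-uniform hypergraphs $G$ on $[n]$), it suffices to verify agreement on these sets.

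For such $G$ and $n$, I will apply \cref{lemma:hyper-int} to the constant function $\hat{h} \equiv 1$, so that $h \equiv 1$. The left-hand side of the identity in \cref{lemma:hyper-int} then collapses to
\begin{equation*}
\int_{q \in S^{+}_{x_1,\ldots,x_n}(M)} \mathbf{1}_{\varphi_G(x_1,\ldots,x_n)}(q)\, d\mu_W^{(n)} = \kmpow{\mu_W}(\varphi_G(x_1,\ldots,x_n)) = g_*(\kmpow{\mu_W})(\llbracket \varphi_G(1,\ldots,n)\rrbracket),
\end{equation*}
and the right-hand side collapses to
\begin{equation*}
\int_{\bar{t} \in P_n} \prod_{I \in {[n] \choose k}} W^{\epsilon^G(I)}(\bar{t}_I)\, d\lambda_n = \mathbb{G}(\mathbb{N},W)(\llbracket \varphi_G(1,\ldots,n)\rrbracket),
\end{equation*}
where the final equality is the standard hypergraphon sampling formula for a complete hypergraph formula, as recorded in the lemma immediately following \cref{def:hypergraphon}. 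Equating the two sides gives the desired identity.

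The real content of this theorem lies in the preparatory work it invokes, principally \cref{lemma:hyper-int}, whose proof proceeds by induction on $n$ with base case $n = k$ and handles the full combinatorial bookkeeping of hyperedge contributions and lower-arity type data via the quotient map $\gamma_n$. Once \cref{lemma:hyper-int} is in place, the theorem itself reduces to a one-line application combined with the invariance machinery of \cref{lemma:hyper-excellent} and \cref{fact:excellent}, exactly as in the graphon setting. The main obstacle, then, is not in this theorem but in ensuring that \cref{lemma:hyper-int} is set up with the correct parametrization of $P_n$ and the correct matching of hypergraphon factors $W^{\epsilon^G(I)}(\bar{t}_I)$ with the indices $I \in {[n]\choose k}$; once this alignment is clear, the specialization to $\hat{h}\equiv 1$ delivers the theorem in a single step.
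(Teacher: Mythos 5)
Your proof is correct, but it takes a genuinely simpler route than the paper's. The paper proves \cref{theorem:main} by splitting off the last variable $x_{n+1}$, computing the fiber function $F_{\mu_{x_{n+1}}}^{\varphi_G(x_{n+1};\bar{x})}$, packaging its integral form as a nontrivial Borel function $\hat h : P_n \to [0,1]$ so that $F_{\mu_{x_{n+1}}}^{\varphi_G}(s) = \mathbf{1}_{G_n(\bar{x})}(s)\cdot(\hat h\circ\gamma_n)(s)$, and only then invoking \cref{lemma:hyper-int} on $n$ variables. You instead apply \cref{lemma:hyper-int} directly to the $n$-variable formula $\varphi_G$ with the constant $\hat h\equiv 1$, at which point the left-hand side is exactly $\mu_W^{(n)}(\varphi_G) = \kmpow{\mu_W}(\varphi_G) = g_*(\kmpow{\mu_W})(\llbracket\varphi_G(1,\ldots,n)\rrbracket)$ (using that $\mu_W$ does not concentrate on points, so the restricted measure lives on $S^+$), and the right-hand side is the hypergraphon sampling integral. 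This is legitimate because \cref{lemma:hyper-int} is stated for arbitrary Borel $\hat h$ and all $n\geq k$; the full generality in $\hat h$ is truly needed inside the induction that proves \cref{lemma:hyper-int}, but not in its application here. Your version is shorter and avoids a redundant pass through the Morley product. One small observation: the appeal to $\Sym(\mathbb{N})$-invariance of $g_*(\kmpow{\mu_W})$ (via \cref{lemma:hyper-excellent} and \cref{fact:excellent}) is strictly speaking dispensable for the $\pi$-system argument, since the cylinder sets $\llbracket\varphi_G(1,\ldots,n)\rrbracket$ on initial segments $\{1,\ldots,n\}$ already generate the Borel $\sigma$-algebra of $\str_{\mathcal{L}}$ as $n$ varies; the paper includes the same appeal, so this is not a defect but is worth noting.
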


\begin{proof}To simplify notation, we denote $\mu_{W}$ simply as $\mu$. By Lemma \ref{lemma:hyper-excellent} we have that $\mu$ is $M$-excellent and so $g_{*}(\kmpow{\mu})$ is $\Sym(\mathbb{N})$-invariant by Fact \ref{fact:excellent}. Hence it suffices to show that $g_{*}(\kmpow{\mu})$ and $\mathbb{G}(\mathbb{N}, W)$ agree on sets of the form $\llbracket \varphi_{G}(1,...,n +1) \rrbracket$ where $G$ describes a hypergraph on $\{1,...,n +1\}$ and $n \geq k-1$. Notice that $g^{-1}(\llbracket \varphi_{G}(1,...,n +1)\rrbracket)$ is the clopen set corresponding to $[\varphi_{G}(x_1,...,x_{n+1})]$. We let $\varphi_{G_{n}}(x_1,...,x_{n})$ be the hypergraph formula corresponding to the hypergraph $G$ restricted to $\{1,...,n\}$. Notice that if $n = k-1$, then the base case of Lemma \ref{lemma:hyper-int} gives a proof. Hence we only need to consider $n \geq k$. 

We define $\hat{h}\colon P_{n} \to [0,1]$ via 
\begin{equation*}
\hat{h}(\bar{t}):=\int_{p\in S_{x_{n+1}}^{\flat}}\int_{\bar{q}\in\mathbb{S}_{n}}\prod_{I\in{[n] \choose k-1}}W^{\epsilon^{G}(I \cup \{n+1\})}(p,\mathring{\mathbf{t}}_{[I]},\bar{q}_{[I]})d\bar{\lambda}, 
\end{equation*}
where $\mathring{\mathbf{t}}_{[I]} = (t_{I_0})_{\EmptySet \neq I_0 \subseteq I}$. If $I \in {[n]\choose k-1}$ and $s \in S_{x_1,...,x_n}^{+}(M)$, then we let $[\tp^{\flat}_{I}(s)] = (\tp_{I_0}^{\flat}(s))_{\EmptySet \neq I_0 \subseteq I}$. Now notice that if $s \in S_{x_1,...,x_n}^{+}(M)$, we have that 
\begin{align*}
    F_{\mu_{x_{n+1}}}^{\varphi_{G}(x_{n+1};\bar{x})}(s) &= \mu(\varphi_{G}(x_{n+1},C)) \\ 
    &= \mathbf{1}_{\varphi_{G_{n}}(x_1,...,x_n)}(s) \cdot \int_{p \in S_{x_{n+1}}^{\flat}} \int_{\bar{q} \in \mathbb{S}_{C}} \prod_{C_0 \in {C \choose k -1 }} W^{\epsilon^{G}(\EmptySet,C_0)}(p,\mathring{\mathbf{q}}_{[C_0]})d\bar{\lambda} \\ 
    &= \mathbf{1}_{G_{n}(\bar{x})}(s) \int_{p \in S_{x_{n+1}}^{\flat}} \int_{\bar{q} \in \mathbb{S}_{C}} \prod_{C_0 \in {C \choose k -1 }} W^{\epsilon^{G}(\EmptySet,C_0)}(p,[\tp^{\flat}(C_0)], \bar{q}_{[C_0]})d\bar{\lambda} \\ 
    &= \mathbf{1}_{G_{n}(\bar{x})}(s) \int_{p \in S_{x_{n+1}}^{\flat}} \int_{\bar{q} \in S_{n}} \prod_{I \in {[n] \choose k -1}} W^{\epsilon^{G}(I \cup \{n+1\})}(p, [\tp_{I}^{\flat}(s)] ,\bar{q}_{[I]})d\bar{\lambda} \\ 
    &= \mathbf{1}_{G_{n}(\bar{x})}(s) \cdot (\hat{h} \circ \gamma_{n})(s).
\end{align*}
\noindent Setting $h = \hat{h} \circ \gamma_{n}$ and applying Lemma \ref{lemma:hyper-int}, we see that
\begin{align*}
    g_{*}(\kmpow{\mu})(\llbracket \varphi_{G}(1,...,n+1)\rrbracket) 
    &=\kmpow{\mu}(g^{-1}(\llbracket \varphi_{G}(1,...,n+1)\rrbracket)) \\ 
    &= \kmpow{\mu}(\varphi_{G}(x_1,...,x_{n+1})) \\
    &= \left( \bigotimes_{i=1}^{n+1} \mu_{x_i} \right) (\varphi_{G}(x_1,...,x_{n+1})) \\
    &= \left(\mu_{x_{n+1}}  \otimes  \bigotimes_{i = 1}^{n} \mu_{x_i} \right)(\varphi_{G}(x_1,...,x_{n+1})) \\
    &= \int_{s \in S_{x_1,...,x_{n}}^{+}(M)} F_{\mu_{x_{n+1}}}^{\varphi_{G}^{*}(x_{n+1};\bar{x})}(s) d\mu^{(n)} \\
    &= \int_{s \in S_{x_1,...,x_{n}}^{+}(M)} \mathbf{1}_{G_{n}(x_1,...,x_{n})}(s) \cdot h(s) d\mu^{(n)} \\
    &\overset{(*)}{=} \int_{\bar{t} \in P_{n}} \hat{h}(\bar{t}) \prod_{I \in {[n] \choose k}}  W^{\epsilon^{G_{n}}(I)}(\bar{t}_{I}) d\lambda_{n} \\
  &\overset{(**)}{=} \int_{\bar{t} \in P_{n}} \int_{p \in S_{x_{n+1}}^{\flat}} \int_{\bar{q} \in \mathbb{S}_{n}} \prod_{I_0 \in {[n] \choose k-1}} W^{\epsilon^{G}(I_0 \cup \{x_{n+1}\})}(p,\mathring{\mathbf{t}}_{[I_0]},\bar{q}_{[I_0]})\prod_{I \in {[n] \choose k}} W^{\epsilon^{G}(I)}(\bar{t}_I) d\lambda_{n+1} \\ 
  &= \int_{\bar{t} \in P_{n+1}} \prod_{J \in {n+1 \choose k}} W^{\epsilon^{G}(J)}(\bar{t}_{J})d\lambda_{n+1} \\ 
  &= \mathbb{G}(\mathbb{N}, W)(\varphi_{G}(x_1,...,x_{n+1})). 
\end{align*}
Equation $(*)$ follows from Lemma \ref{lemma:hyper-int} while equation $(**)$ follows from unpacking $\hat{h}$ and reorganizing terms. 
\end{proof}

\begin{corollary}\label{cor:AFP2} Let $G$ be a countable $k$-uniform hypergraph. Then the following are equivalent. 
\begin{enumerate}
    \item There exists an $M$-excellent Keisler measure $\mu$ which does not concentrate on points and such that $\kmpow{\mu}(\mathbb{B}_{G}) = 1$.
    \item There exists a $\Sym(\mathbb{N})$-invariant measure $\eta$ on $\str_{\mathcal{L}_{k}}$ such that $\eta(\mathbb{A}_{G}) = 1$. 
    \item The hypergraph $G$ has trivial group theoretic definable closure, i.e., for any finite tuple $\bar{a} = a_1,...,a_n$ from $G$, we have that $\dcl(\bar{a}) = \bar{a}$, where $\dcl(\bar{a})$ is the collection of elements from $G$ that are fixed by all the automorphisms of $G$ which fix $\bar{a}$ pointwise.
\end{enumerate}   
\end{corollary}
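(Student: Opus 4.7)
The plan is to follow the same three-step pattern used for the graph version in \cref{cor:AFP1}, with the hypergraphon machinery developed in this section replacing the graphon machinery. First, I would observe that the equivalence (2) $\Leftrightarrow$ (3) is exactly a specialization of the main theorem of \cite{ackerman2016invariant} (their Theorem 1.1), which characterizes countable structures admitting an invariant measure concentrating on their isomorphism class as precisely those with trivial group-theoretic definable closure. The theory of $k$-uniform hypergraphs is a special case; since \cref{cor:AFP1} already invoked this result in the binary case, one simply invokes it here with $\mathcal{L} = \mathcal{L}_k$.

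For (1) $\Rightarrow$ (2), the argument is immediate: given an $M$-excellent $\mu$ not concentrating on points with $\kmpow{\mu}(\mathbb{B}_G) = 1$, \cref{fact:excellent} gives that $\eta := g_{*}(\kmpow{\mu})$ is $\Sym(\mathbb{N})$-invariant, and
\[
\eta(\mathbb{A}_G) = \kmpow{\mu}(g^{-1}(\mathbb{A}_G)) = \kmpow{\mu}(\mathbb{B}_G) = 1,
\]
as desired.

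For (2) $\Rightarrow$ (1), which is the most substantive direction, the plan is to use the ``random-free'' representation available in the hypergraph setting. If $\eta$ is $\Sym(\mathbb{N})$-invariant and concentrates on $\mathbb{A}_G$, then by the same observation used in \cite[subsection 6.1.2]{ackerman2016invariant} (whose argument goes through for any arity, as the construction only uses trivial definable closure and produces a $\{0,1\}$-valued hypergraphon), there exists a random-free Borel $k$-ary hypergraphon $W$ such that $\mathbb{G}(\mathbb{N}, W)$ concentrates on $\mathbb{A}_G$. Then \cref{lemma:hyper-excellent} gives that $\mu_W$ is $M$-excellent; \cref{prop:nightmare} ensures $\mu_W$ is a Keisler measure; and by construction it does not concentrate on points (the measure of any finite set is $0$, since the definition of $\mu_W$ always integrates over nonatomic scaffolding measures $\lambda_I$). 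Finally, \cref{theorem:main} gives $g_{*}(\kmpow{\mu_W}) = \mathbb{G}(\mathbb{N}, W)$, so
\[
\kmpow{\mu_W}(\mathbb{B}_G) = g_{*}(\kmpow{\mu_W})(\mathbb{A}_G) = \mathbb{G}(\mathbb{N}, W)(\mathbb{A}_G) = 1.
\]

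The only real subtlety is ensuring that the random-free hypergraphon construction of Ackerman--Freer--Patel (stated there in the graph case) extends to $k$-uniform hypergraphs for $k > 2$; this is the step one should cite most carefully, but it is implicit in their framework since trivial definable closure is what drives the construction and the cryptomorphism \cref{lemma:hyper-rep} supplies the hypergraphon side of the correspondence. Everything else is a direct transcription of the graph argument.
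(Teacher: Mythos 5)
Your proposal is correct and takes essentially the same route as the paper: $(2)\Leftrightarrow(3)$ via \cite[Theorem 1.1]{ackerman2016invariant}, $(1)\Rightarrow(2)$ via \cref{fact:excellent}, and $(2)\Rightarrow(1)$ via the random-free hypergraphon construction combined with \cref{lemma:hyper-excellent} and \cref{theorem:main}. Incidentally, you cite \cref{lemma:hyper-excellent} for excellence of $\mu_W$, which is the right lemma; the paper's printed proof inadvertently cites the graphon-case \cref{lemma:excellent} at this step.
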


\begin{proof} Statements $(2)$ and $(3)$ are equivalent via \cite[Theorem 1.1]{ackerman2016invariant}. By 
Fact \ref{fact:excellent}, statement $(1)$ implies statement $(2)$.

Assume statement $(2)$. By a straightforward generalization of an observation in \cite[subsection 6.1.2] {ackerman2016invariant}, there exists a \emph{random-free hypergraphon} $W$ (which is Borel) such that $\mathbb{G}(\mathbb{N}, W)$ concentrates on $\mathbb{A}_{G}$. 
By Lemma \ref{lemma:excellent}, the measure $\mu_{W}$ is $M$-excellent and obviously does not concentrate on points. By Theorem \ref{theorem:main}, $g_{*}(\kmpow{\mu_{W}}) = \mathbb{G}(\mathbb{N}, W)$ and so $\mathbb{P}(\mu_{W})(\mathbb{B}_{G}) = 1$.
Hence statement $(1)$ holds.
\end{proof}

\begin{problem} In \cite[Corollary 5.1 +  Corollary 5.2]{braunfeld2024invariance}, Braunfeld, Jahel and Marimon classified $\EmptySet$-invariant measures over $T_{k}$ (as well as over some omitting subgraph variants). What does generic sampling look like with respect to these measures? Prove a theorem in the same vein as \cref{prop:emptyset1and2,prop:emptyset3,prop:emptyset4}.  
\end{problem}

\begin{corollary}\label{cor:hyper_ergodic} Suppose $\nu$ is an ergodic $\Sym(\mathbb{N})$-invariant measure concentrated on the collection of $k$-uniform hypergraphs with underlying set $\mathbb{N}$. Then there exists an $M$-excellent Keisler measure $\mu \in \mathfrak{M}^{\inv}_{x}(\mathcal{U},M)$ which does not concentrate on points and $g_{*}(\mathbb{P}_{\mu}) = \nu$. 
\end{corollary}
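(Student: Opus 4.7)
The plan is to mimic the proof of \cref{cor:ergodic}, now using the hypergraph analogues that have already been established in Section~\ref{sec:hypergraphon}. So the argument will be a one-line assembly of \cref{lemma:hyper-rep,lemma:hyper-excellent,theorem:main}, together with the easy observation that the constructed measure does not concentrate on points.

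Specifically, starting from the hypothesis that $\nu$ is ergodic and $\Sym(\mathbb{N})$-invariant on the collection of $k$-uniform hypergraphs on $\Nats$, I would first invoke \cref{lemma:hyper-rep} to produce a Borel $k$-ary hypergraphon $W_\nu$ with $\mathbb{G}(\Nats, W_\nu) = \nu$. Next, I would form the associated Keisler measure $\mu_{W_\nu} \in \mathfrak{M}^{\inv}_{x}(\mathcal{U}, M)$ from \cref{def:measure2}; by \cref{prop:nightmare} this is indeed a Keisler measure, and by \cref{lemma:hyper-excellent} it is $M$-excellent. Moreover, unwinding \cref{def:measure2}, for any $a \in \mathcal{U}$ the formula $x = a$ is either trivial (in the sense of being equivalent, modulo finitely many points, to a formula where the inner integrand vanishes) or yields a definable set of $\mu_{W_\nu}$-measure zero because the integration is over a diffuse space; hence $\mu_{W_\nu}$ does not concentrate on points.

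Finally, applying \cref{theorem:main} gives
\begin{equation*}
g_*(\kmpow{\mu_{W_\nu}}) \;=\; \mathbb{G}(\Nats, W_\nu) \;=\; \nu,
\end{equation*}
which produces the required witness $\mu := \mu_{W_\nu}$.

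There is no real obstacle here since all the heavy lifting has already been done: the representation of ergodic exchangeable random hypergraphs by Borel hypergraphons (\cref{lemma:hyper-rep}), the excellence of $\mu_W$ for Borel $W$ (\cref{lemma:hyper-excellent}), and the matching of pushforwards (\cref{theorem:main}). The only mild point worth stating carefully is that $\mu_{W_\nu}$ does not concentrate on points; this is immediate from the construction because the Lebesgue-type scaffolding measures $\lambda_I$ on the spaces $S_I^\flat$ are non-atomic on realized types, so any singleton $\{x = a\}$ is assigned measure zero.
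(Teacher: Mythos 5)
Your proof is correct and follows essentially the same route as the paper's, which simply cites \cref{lemma:hyper-rep} and \cref{theorem:main}. One small note: the fact that $\mu_{W_\nu}$ does not concentrate on points is actually built into \cref{def:measure2} by stipulation (``the measure of any finite set has value $0$''), rather than being a consequence of the scaffolding measures $\lambda_I$ being non-atomic, but this does not affect the validity of your argument.
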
 

\begin{proof} Follows directly from \cref{lemma:hyper-rep} and \cref{theorem:main}. 
\end{proof}

\appendix

\section{Proof of \cref{prop:nightmare}}
\label{sec:nightmare-proof}

In this appendix, we give the proof that $\mu_{W}$ is a Keisler measure for any Borel $k$-ary hypergraphon.
We will use the following elementary result.

\begin{lemma}\label{lemma:sumprod} For $a_1, \ldots ,a_k \in [0,1]$, we have
\begin{equation*}
    \sum_{f \colon \{1,...,k\} \to \{0,1\}} \prod_{i=1}^{k} a_i^{f(i)}(1-a_i)^{f(i)} = 1. 
\end{equation*}
\end{lemma}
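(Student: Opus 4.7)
The plan is to recognize the left-hand side as the full expansion of a product of binomials. Distributing $\prod_{i=1}^{k}\bigl(a_i+(1-a_i)\bigr)$ produces exactly one term for each way of choosing either $a_i$ or $1-a_i$ from each factor, i.e., one term per function $f\colon\{1,\ldots,k\}\to\{0,1\}$, where we record $f(i)=1$ if $a_i$ is chosen and $f(i)=0$ if $1-a_i$ is chosen. Read in the natural way, the contribution of $f$ is $\prod_{i=1}^{k} a_i^{f(i)}(1-a_i)^{1-f(i)}$, which matches the summand. Since $a_i+(1-a_i)=1$ for every $i$, the whole product collapses to $1$, yielding the identity.

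Alternatively, I would prove the statement by induction on $k$. The base case $k=1$ is the trivial identity $a_1+(1-a_1)=1$. For the inductive step, I would split the sum according to the value of $f(k)\in\{0,1\}$: the contribution with $f(k)=1$ factors as $a_k$ times the analogous sum over functions $g\colon\{1,\ldots,k-1\}\to\{0,1\}$, while the contribution with $f(k)=0$ factors as $(1-a_k)$ times the same sum. Applying the inductive hypothesis to each of these two inner sums reduces the expression to $a_k+(1-a_k)=1$.

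Neither approach presents any real obstacle; the identity is a standard combinatorial fact, ultimately a one-line consequence of the distributive law. The only point requiring care is the bookkeeping of exponents, namely that the two choices at each index appear with exponents whose sum is $1$, so that their combined contribution at each index reconstitutes $a_i+(1-a_i)$. With that observation in hand, either the direct distribution argument or the inductive argument finishes in a few lines.
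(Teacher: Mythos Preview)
Your argument is correct. The paper's own proof phrases the same observation probabilistically: view $a_i$ as the bias of the $i$th of $k$ independent coins, so each $f$ names one outcome sequence and the sum is the total probability over the sample space, hence $1$. Your distributive-law expansion of $\prod_{i}\bigl(a_i+(1-a_i)\bigr)$ is precisely the algebra underlying that sentence, and your inductive variant just peels off one coin at a time; all three are the same idea in different dress, with the paper's version the tersest and yours the most explicit. Incidentally, you have (correctly) read the second exponent as $1-f(i)$ rather than the $f(i)$ printed in the statement---the paper's probabilistic proof makes clear that this is the intended formula.
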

\begin{proof}
Flip a sequence of $k$ independent coins with respective weights $a_1$ through $a_k$. Each function $f$ describes one possible outcome, and exactly one such outcome must occur.
\end{proof}

The rest of this section constitutes the proof of \cref{prop:nightmare}.

\begin{proof}[Proof of \cref{prop:nightmare}]
Suppose that $B \subseteq M$ and $C \subseteq \mathcal{U}\backslash M$, and let $\Xi(x,B,C)$ be a complete formula over $BC$. We will argue that for any instance of the formula $R(x,\bar{y})$, say $\gamma(x)$, 
\begin{equation*}
    \mu(\Xi(x,B,C) \wedge \gamma(x)) + \mu(\Xi(x,B,C) \wedge \neg \gamma(x)) = \mu(\Xi(x,B,C) ).  
\end{equation*}
By quantifier elimination and induction, our proof is complete. We prove 3 cases: 

\vspace{.1in}

\noindent \textbf{Case 1:} Suppose that $D \subseteq M$ where $|D| = k - 1$ and $D \cap B = \EmptySet$. Set $\gamma(x) =R(x,D)$. We remark that the case of $D \subseteq M$ where $|D| = k - 1$ and $D \cap B \neq \EmptySet$ is almost identical but with slightly different bookkeeping. 
\begin{enumerate}
    \item  We let $S_{\Xi}^{-}(BCD)$ be the collection of formulas (with parameters from $BCD$) such that for every $r \in S_{\Xi}^{-}(BCD)$, $r \vdash \Xi$ and both $r \wedge {R(x,D)}$ and $r \wedge \neg R(x,D)$ are complete (over $BCD$). In other words, these formulas make decision about all possible $(k-1)$-tuples of elements from $BCD$, they agree with $\Xi$, but remain agnostic on $R(x,D)$. 
    \item If $r \in S_{\Xi}^{-}(BCD)$, then $\psi_{BD}^{r}$ is the formula corresponding to the intersection of the  subcollection of literal formulas from $r$ where each literal contains parameters only from $B$ and $D$. In other words, $\psi_{BD}^{r} \vdash R(x,E)$ if and only if both $E \subseteq B \cup D$ and $r \vdash R(x,E)$. Notice that $\psi_{BD}^{r}$ remains agnostic on $R(x,D)$.  
    \item Recall if $Q_0 \subseteq BD$ and $C_0 \subseteq C$ such that $|C_0| + |Q_0| = k-1$, then ${\epsilon^{r}}(Q_0,C_0) = 1$ if $r \vdash R(x,Q_0,C_0)$ and ${\epsilon^{r}}(Q_0,C_0) = -1$ if $r \vdash \neg R(x,Q_0,C_0)$. 
\end{enumerate}
Now consider the following computation: 
\begin{align*}
    &\mu(\Xi(x,B,C) \wedge \gamma(x)) + \mu(\Xi(x,B,C) \wedge \neg \gamma(x)) \\  
    &\hspace*{2pt}=
    \sum_{r \in S_{\Xi}^{-}(BCD)}  \mu(\Xi(x,B,C) \wedge \gamma(x) \wedge r(x)) + \mu(\Xi(x,B,C) \wedge \neg \gamma(x) \wedge r(x)) \\
    &\hspace*{2pt}=
    \sum_{r \in S_{\Xi}^{-}(BCD)} \int_{p \in S^{\flat}_{x}} \int_{\bar{q} \in \mathbb{S}_{C}} [\mathbf{1}_{\Xi_{B} \wedge \psi^{r}_{BD} \wedge \gamma} + \mathbf{1}_{\Xi_{B} \wedge \psi^{r}_{BD} \wedge \neg \gamma}] \\ 
    &\hspace*{20pt}
    \left( \prod_{t=1}^{k-2} \prod_{C_0 \in { C\choose t }} \prod_{Q_0 \in {BD \choose k - 1 -t }} \mathbf{1}_{R^{\epsilon^{r}(Q_0,C_0)}(x,Q_0,\bar{z}_t)}(q_{C_0}) \right) \left(\prod_{C_0 \in {C\choose k-1}}W^{\epsilon(\EmptySet,C_0)} (p,\mathring{\mathbf{q}}_{[C_0]}) \right) d\bar{\lambda} \\
    &\hspace*{2pt}=
      \int_{p \in S^{\flat}_{x}} \int_{\bar{q} \in \mathbb{S}_{C}}   \left( \prod_{C_0 \in {C\choose k-1}}W^{\epsilon(\EmptySet,C_0)} (p,\mathring{\mathbf{q}}_{[C_0]}) \right)  \\  
    &\hspace*{20pt}
    \left( \sum_{r \in S_{\Xi}^{-}(BCD)} \mathbf{1}_{\Xi_{B} \wedge \psi^{r}_{BD}}(p) \prod_{t=1}^{k-2} \prod_{C_0 \in { C\choose t }} \prod_{Q_0 \in {BD \choose k - 1 -t }} \mathbf{1}_{R^{\epsilon^{r}(Q_0,C_0)}(x,Q_0,\bar{z}_t)}(q_{C_0}) \right) d\bar{\lambda} \\
    &\hspace*{2pt}\overset{(*)}{=}  
    \int_{p \in S^{\flat}_{x}} \int_{\bar{q} \in \mathbb{S}_{C}}   \left( \prod_{C_0 \in {C\choose k-1}}W^{\epsilon(\EmptySet,C_0)} (p,\mathring{\mathbf{q}}_{[C_0]}) \right)  \left(  \mathbf{1}_{\Xi_{B}}(p) \prod_{t=1}^{k-2} \prod_{C_0 \in { C\choose t }} \prod_{B_0 \in {B \choose k - 1 -t }} \mathbf{1}_{R^{\epsilon^{\Xi}(B_0,C_0)}(x,B_0,\bar{z}_t)}(q_{C_0}) \right) d\bar{\lambda} \\ 
    &\hspace*{2pt}=
    \mu(\Xi(x,B,C)).
\end{align*}
We now justify Equation $(*)$. Consider the functions given by 
\begin{equation*}
    G(p,\bar{q}) :=  \sum_{r \in S_{\Xi}^{-}(BCD)} \mathbf{1}_{\Xi_{B} \wedge \psi^{r}_{BD}}(p) \prod_{t=1}^{k-2} \prod_{C_0 \in { C\choose t }} \prod_{Q_0 \in {BD \choose k - 1 -t }} \mathbf{1}_{R^{\epsilon^{r}(C_0,Q_0)}(x,Q_0,\bar{z}_t)}(q_{C_0})
\end{equation*}
and
\begin{equation*}
    H(p,\bar{q}) :=   \mathbf{1}_{\Xi_{B}}(p) \prod_{t=1}^{k-2} \prod_{C_0 \in { C\choose t }} \prod_{B_0 \in {B \choose k - 1 -t }} \mathbf{1}_{R^{\epsilon^{\Xi}(B_0,C_0)}(x,B_0,\bar{z}_t)}(q_{C_0}).
\end{equation*}
We claim that $G(p,\bar{q}) = 1$ if and only if $H(p,\bar{q}) = 1$; since the functions are $\{0,1\}$-valued, they are equal. Indeed, if $G(p,\bar{q}) = 1$, then there exists a unique $r$ such that 
\begin{equation*}
    \mathbf{1}_{\Xi_{B} \wedge \psi^{r}_{BD}}(p) \prod_{t=1}^{k-2} \prod_{C_0 \in { C\choose t }} \prod_{Q_0 \in {BD \choose k - 1 -t }} \mathbf{1}_{R^{\epsilon^{r}(C_0,Q_0)}(x,Q_0,\bar{z}_t)}(q_{C_0}) =1.
\end{equation*}
But then, $H(p,\bar{q}) = 1$ since $H$ is a subproduct (of characteristic functions). On the other hand, if $H(p,\bar{q}) = 1$, we consider the unique $r_{*} \in S_{\Xi}^{-}(BCD)$ such that 
\begin{equation*}
    r_{*} \vdash p|_{BD} \cup \bigcup_{\substack{C_0 \subseteq C \\ Q_0 \subseteq BD}} \left\{ \begin{array}{cc}
R(x,Q_{0},C_{0}) & R(x,Q_{0},\bar{z}_{t})\in q_{C_{0}}\\
\neg R(x,Q_{0},C_{0}) & \neg R(x,Q_{0},\bar{z}_{t})\in q_{C_{0}}
\end{array} \right\} .
\end{equation*}
Hence the following equality holds:
\begin{equation*}
    \mathbf{1}_{\Xi_{B} \wedge \psi^{r_*}_{BD}}(p) \prod_{t=1}^{k-2} \prod_{C_0 \in { C\choose t }} \prod_{Q_0 \in {BD \choose k - 1 -t }} \mathbf{1}_{R^{\epsilon^{r_*}(C_0,Q_0)}(x,Q_0,\bar{z}_t)}(q_{C_0}) = 1.
\end{equation*}
Since $r_*$ is unique element in $S_{\Xi}^{-}(BCD)$ with such property, we conclude that $G(p,\bar{q}) = 1$. 

\vspace{.1in}

\noindent \textbf{Case 2:} Suppose that $E \subseteq \mathcal{U} \backslash M$ where $|E| = k -1$ and $E \cap C = \EmptySet$. Set $\gamma(x) = R(x,E)$. Again, we remark that the case of $E \subseteq M$ where $|E| = k - 1$ and $E \cap C \neq \EmptySet$ is almost identical but with slightly different bookkeeping. We define the following:
\begin{enumerate}
    \item  Here $S_{\Xi}^{-}(BCE)$ is the collection of formulas (with parameters from $BCE$) such that for every $r \in S_{\Xi}^{-}(BCE)$, $r \vdash \Xi$ and both $r \wedge {R(x,E)}$ and $r \wedge \neg R(x,E)$ complete (over $BCE$). In other words, these formulas make decision about all possible tuples of elements from $BCD$, they agree with $\Xi$, but remain agnostic on $R(x,E)$. 
    \item We let $K\colon S_{x}^{\flat} \times \mathbb{S}_{C} \to [0,1]$ via
    \begin{equation*}
        K(p,\bar{q}) := \mathbf{1}_{\Xi_{B}}(p) \left( \prod_{t=1}^{k-2} \prod_{C_0 \in {C\choose t}} \prod_{B_0 \in {B\choose k - 1 -t}} \mathbf{1}_{R^{\epsilon^{\Xi}(B_0,C_0)}(x,B_0,\bar{z}_t)}(q_{C_0}) \right) \left( \prod_{C_0 \in {C\choose k - 1}} W^{\epsilon^{\Xi}(\EmptySet,C_0)}(p,\mathring{\mathbf{q}}_{[C_0]}) \right). 
    \end{equation*} 
    \item For each $r \in S_{\Xi}^{-}(BCE)$, we let $H_{r}: S_{x}^{\flat} \times \mathbb{S}_{CE} \to [0,1]$ via 
    \begin{equation*}
            H_r(p,\bar{q}) := \left( \prod_{t=1}^{k-2} \prod_{\substack{P_0 \in { CE\choose t } \\ P_0 \cap E \neq \EmptySet}} \prod_{B_0 \in {B \choose k - 1 -t }} \mathbf{1}_{R^{\epsilon^{r}(B_0,P_0)}(x,B_0,\bar{z}_t)}(q_{P_0}) \right). 
    \end{equation*}
    \item For each $r \in S_{\Xi}^{-}(BCE)$, we let $F_{r}: S_{x}^{\flat} \times \mathbb{S}_{CE} \to [0,1]$ via
    \begin{equation*}
            F_r(p,\bar{q}) := \left(\prod_{\substack{P_0 \in {CE\choose k-1} \\ P_0 \neq E \\ P_0 \cap E \neq \EmptySet}}W^{\epsilon^{r}(\EmptySet,P_0)} (p,\mathring{\mathbf{q}}_{[P_0]}) \right).
    \end{equation*}
    \item For each $r \in S_{\Xi}^{-}(BCE)$, we let $G_{r}(p,\bar{q}) := H_{r}(p, \bar{q}) \cdot F_{r}(p,\bar{q})$. 
\end{enumerate}
Now consider the following computation:
\begin{align*}
   &\mu(\Xi(x,B,C) \wedge \gamma(x)) + \mu(\Xi(x,B,C) \wedge \neg \gamma(x)) \\  
    &\hspace*{10pt}=
    \sum_{r \in S_{\Xi}^{-}(BCE)}  \mu(\Xi(x,B,C) \wedge \gamma(x) \wedge r(x)) + \mu(\Xi(x,B,C) \wedge \neg \gamma(x) \wedge r(x)) \\
    &\hspace*{10pt}=
    \sum_{r \in S_{\Xi}^{-}(BCE)} \int_{p \in S^{\flat}_{x}} \int_{\bar{q} \in \mathbb{S}_{CE}} \mathbf{1}_{\Xi_{B}}(p) \left( \prod_{t=1}^{k-2} \prod_{P_0 \in { CE\choose t }} \prod_{B_0 \in {B \choose k - 1 -t }} \mathbf{1}_{R^{\epsilon^{r}(B_0,P_0)}(x,B_0,\bar{z}_t)}(q_{P_0}) \right) \\ 
    &\hspace*{30pt}
     \left(\prod_{\substack{P_0 \in {CE\choose k-1} \\ P_0 \neq E}}W^{\epsilon^{r}(\EmptySet,P_0)} (p,\mathring{\mathbf{q}}_{[P_0]}) \right) \cdot (W(p,\mathring{\mathbf{q}}_{[E]}) - (1 - W(p,\mathring{\mathbf{q}}_{[E]}))) d\bar{\lambda} \\
    &\hspace*{10pt}=
    \int_{p \in S^{\flat}_{x}} \int_{\bar{q} \in \mathbb{S}_{CE}} \mathbf{1}_{\Xi_{B}}(p) \sum_{r \in S_{\Xi}^{-}(BCE)} \left( \prod_{t=1}^{k-2} \prod_{P_0 \in { CE\choose t }} \prod_{B_0 \in {B \choose k - 1 -t }} \mathbf{1}_{R^{\epsilon^{r}(B_0,P_0)}(x,B_0,\bar{z}_t)}(q_{P_0}) \right) \\ 
    &\hspace*{30pt}
     \left(\prod_{\substack{P_0 \in {CE\choose k-1} \\ P_0 \neq E}}W^{\epsilon^{r}(\EmptySet,P_0)} (p,\mathring{\mathbf{q}}_{[P_0]}) \right)d\bar{\lambda} \\
    &\hspace*{10pt}\overset{}{=}  
    \int_{p \in S^{\flat}_{x}} \int_{\bar{q} \in \mathbb{S}_{CE}} K(p,\pi_{\mathbb{S}_{C}}(\bar{q})) \left(\sum_{r \in S_{\Xi}^{-}(BCE)}G_{r}(p,\bar{q})\right) d\bar{\lambda} \\
    &\hspace*{10pt}\overset{(\dagger)}{=}  
    \int_{p \in S^{\flat}_{x}} \int_{\bar{q} \in \mathbb{S}_{CE}} K(p,\pi_{\mathbb{S}_{C}}(\bar{q})) d\bar{\lambda} \\
    &\hspace*{10pt}=
    \int_{p \in S^{\flat}_{x}} \int_{\bar{q} \in \mathbb{S}_{C}} \mathbf{1}_{\Xi_{B}}(p) \left( \prod_{t=1}^{k-2} \prod_{C_0 \in {C\choose t}} \prod_{B_0 \in {B\choose k - 1 -t}} \mathbf{1}_{R^{\epsilon(B_0,C_0)}(x,B_0,\bar{z}_t)}(q_{C_0}) \right) \left( \prod_{C_0 \in {C\choose k - 1}} W^{\epsilon(\EmptySet,C_0)}(p,\mathring{\mathbf{q}}_{[C_0]}) \right) d\bar{\lambda} \\ 
    &\hspace*{10pt}=
    \mu(\Xi(x,B,C)).
\end{align*}
It suffices to justify equation $(\dagger)$. Fix $(p,\bar{q})$. We will show that if 
\begin{equation*}
    K(p,\pi_{\mathbb{S}_{C}}(\bar{q})) \left(\sum_{r \in S_{\Xi}^{-}(BCE)}G_{r}(p,\bar{q})\right) > 0,
\end{equation*}
then
\begin{equation*}
    K(p,\pi_{\mathbb{S}_{C}}(\bar{q})) \left(\sum_{r \in S_{\Xi}^{-}(BCE)}G_{r}(p,\bar{q})\right) = K(p,\pi_{\mathbb{S}_{C}}(\bar{q})). 
\end{equation*}
Notice that every element $r \in S_{\Xi}^{-}(BCE)$ can be decomposed into $r_1 \wedge r_2$ where the following hold:
\begin{enumerate}
    \item $r_1 \in S_{\Xi}^{B}(BCE)$ where $S_{\Xi}^{B}(BCE)$ is the collection formulas $\alpha$ which are consistent with $\Xi$ and 
    \begin{equation*}
        \alpha = \bigwedge_{\substack{B_0 \subseteq B \\ 1 < |B_0| < k - 1}} \bigwedge_{\substack{C_0 \subseteq CE \\ |C_0| + |B_0| = k -1}} R^{\epsilon^{\alpha}(B_0,C_0)}(x,B_0,C_0). 
    \end{equation*}
    So each element of $S_{\Xi}^{B}(BCE)$ makes choices about every atomic formula with parameters among $BCE$ which has at least one parameter from $B$.
    \item $r_2 \in S_{\Xi}^{-}(CE)$, i.e., the collection of formulas (with parameters from $CE$) and which are consistent with $\Xi$, remain agnostic on $R(x,E)$, and whose intersection with $R(x,E)$ and $\neg R(x,E)$ are both consistent and complete over $CE$.
\end{enumerate}
Since all possible pairings are consistent, we may write $S_{\Xi}^{-}(BCE) = S_{\Xi}^{B}(BCE) \times S_{\Xi}^{-}(CE)$. Moreover, if $r = (r_1,r_2)$, notice that the map $H_{r}$ is dependent only on $r_1$ while the map $F_{r}$ is only dependent on $r_2$. If $K(p,\pi_{\mathbb{S}_{C}}(\bar{q}) ) > 0$, then we have
\begin{align*}
    \sum_{r \in S_{\Xi}^{-}(BCE)} G_{r}(p,\bar{q}) &= \sum_{r \in S_{\Xi}^{-}(BCE)} H_{r}(p,\bar{q}) \cdot F_{r}(p,\bar{q}) \\ 
    &= \sum_{(r_1,r_2) \in S_{\Xi}^{B}(BCE) \times S_{\Xi}^{-}(CE)} H_{(r_1,r_2)}(p,\bar{q}) \cdot F_{r}(p,\bar{q}) \\
    &= \sum_{r_1 \in S_{\Xi}^{B}(BCE)} H_{r_1}(p,\bar{q}) \sum_{r_2 \in S_{\Xi}^{-}(CE)}F_{r_2}(p,\bar{q})\\
    &\overset{(a)}{=} \sum_{r_1 \in S_{\Xi}^{B}(BCE)} H_{r_1}(p,\bar{q}) \\
    &\overset{(b)}{=} 1, 
\end{align*}
where ($a$) and ($b$) hold for the following reasons:
\begin{enumerate}[label=($\alph*$)]
    \item This follows directly from \cref{lemma:sumprod}. 
    \item Each $(p,\bar{q})$ corresponds to precisely one $r_1$ and thus $H_{r_1}(p,\bar{q}) = 1$ for any choice of $(p,\bar{q})$. 
\end{enumerate}
\noindent \textbf{Case 3:} Suppose that $D \subseteq M$, $E \subseteq \mathcal{U} \backslash M$ are non-empty sets and  $|D| + |E| = k - 1$, $D \cap B = \EmptySet$ and $E \cap C = \EmptySet$. Again, the proofs of the case where $D \cap B \neq \EmptySet$ and $E \cap C \neq \EmptySet$ are similar, but with slightly different bookkeeping. Consider the atomic formula $R(x,D,E)$ and set it equal to $\gamma$. The following definitions will be helpful in the computation.  
\begin{enumerate}
    \item Recall $\psi_{BD}^{r}$ from \emph{Case 1}.
    \item Recall $K\colon S_{x}^{\flat} \times \mathbb{S}_{C} \to [0,1]$ from \emph{Case 2}. 
    \item We let $S_{\Xi}^{-}(BCDE)$ be the collection of all formulas $r$ such that $r \vdash \Xi$ and both $r \wedge R(x,D,E)$ and $r \wedge \neg R(x,D,E)$ are complete (over $BCDE$). 
    In other words, these formulas make decisions about all possible $(k-1)$-tuples of elements from $BCDE$, yet remain agnostic about $R(x,D,E)$. 
    \item For each $r \in S_{\Xi}^{-}(BCDE)$, we let $\hat{H}\colon S_{x}^{\flat} \times \mathbb{S}_{CE} \to [0,1]$ via 
    \begin{equation*}
        H_r(p,\bar{q}) \defas \prod_{t=1}^{k-2} \prod_{\substack{P_0 \in {CE\choose t} \\ Q_0 \in {BD\choose k - 1 -t} \\ (P_0,Q_0) \neq (E,D) }} 1_{R^{\epsilon^{r}(Q_0,P_0)}(x,Q_0,\bar{z}_{t})}(q_{P_0}).
    \end{equation*}
    \item For each $r \in S_{\Xi}^{-}(BCDE)$, we let $\hat{F}_{r}\colon S_{x}^{\flat} \times S_{CE} \to [0,1]$ via 
    \begin{equation*}
        \hat{F}_{r}(p,\bar{q}) \defas \prod_{\substack{P_0 \in {CE\choose k -1} \\ P_0 \cap E \neq \EmptySet}} W^{\epsilon^{r}(\EmptySet,P_0)}(p,\mathring{\mathbf{q}}_{[P_0]}).
    \end{equation*}
    \item For each $r \in S_{\Xi}^{-}(BCDE)$, we let $\hat{G}_{r}(p,\bar{q}) \defas \hat{H}_{r} (p,\bar{q}) \cdot \hat{F}_{r}(p,\bar{q})$. 
\end{enumerate}

Notice that 
\begin{align*}
&\mu(\Xi(x,B,C) \wedge \gamma(x)) + \mu(\Xi(x,B,C) \wedge \neg \gamma(x)) \\  
&\hspace*{20pt}=
    \sum_{r \in S_{\Xi}^{-}(BCDE)}  \mu(\Xi(x,B,C) \wedge \gamma(x) \wedge r(x)) + \mu(\Xi(x,B,C) \wedge \neg \gamma(x) \wedge r(x)) \\
&\hspace*{20pt}=
    \sum_{r \in S_{\Xi}^{-}(BCDE)} \int_{p \in S^{\flat}_{x}} \int_{\bar{q} \in \mathbb{S}_{CE}} [\mathbf{1}_{\Xi_{B} \wedge \psi^{r}_{BD}}](p) \left( \prod_{t=1}^{k-2} \prod_{\substack{P_0 \in {CE \choose t} \\ Q_0 \in {BD \choose k-1 -t } \\ (P_0,Q_0) \neq (E,D)}} \mathbf{1}_{R^{\epsilon^{r}(Q_0,P_0)}(x,Q_0,\bar{z}_{t})}(q_{P_0}) \right) \\ 
&\hspace*{35pt}
    \left( \prod_{P_0 \in {CE \choose k-1}} W^{\epsilon^{r}(\EmptySet,P_0)}(p,\mathring{\mathbf{q}}_{[P_0]}) \right) \left( 1_{R(x,D,\bar{z}_{|E|})}(q_{E}) + 1_{\neg R(x,D,\bar{z}_{|E|})}(q_{E})  \right) d\bar{\lambda} \\
&\hspace*{20pt}=
    \int_{p \in S^{\flat}_{x}} \int_{\bar{q} \in \mathbb{S}_{CE}}  K(p,\pi_{\mathbb{S}_{C}}(\bar{q})) \left(\sum_{r \in S_{\Xi}^{-}(BCDE)} \mathbf{1}_{\psi_{BD}^{r}} \cdot \widehat{G}_{r}(p,\bar{q})\right) d\bar{\lambda} \\
&\hspace*{20pt}\overset{(\ddagger)}{=} 
\int_{p \in S^{\flat}_{x}} \int_{\bar{q} \in \mathbb{S}_{CE}}  K(p,\pi_{\mathbb{S}_{C}}(\bar{q})) d\bar{\lambda} \\
&\hspace*{20pt}=
    \mu(\Xi(x,B,C)). 
\end{align*}
We justify equation $(\ddagger)$ in a similar way to our justification of equation $(\dagger)$ from \emph{Case 2}. Indeed, we argue that if $K(p,\pi_{\mathbb{S}_{C}}(\bar{q})) > 0$, then 
\begin{equation*}
    K(p,\pi_{\mathbb{S}_{C}}(\bar{q})) = K(p,\pi_{\mathbb{S}_{C}}(\bar{q})) \cdot \left( \sum_{r \in S_{\Xi}^{-}(BCDE)} 1_{\psi_{BD}^{r}} \cdot \hat{G}_{r}(p,\bar{q})\right). 
\end{equation*}

Every element $r \in S_{\Xi}^{-}(BCDE)$ can be uniquely decomposed into a triple of elements $r_1 \wedge r_2 \wedge r_3$ where $r_1$ corresponds to literals from $r$ with parameters from inside the model, $r_2$ corresponds to literals from $r$ with parameters from both inside and outside the model, and $r_3$ corresponds to literals with parameters from outside the model. More explicitly, the following hold.
\begin{enumerate}
    \item $r_1 \in S_{\Xi}(BD)$ where $S_{\Xi}(BD)$ is the collection of formulas $\alpha$ which are consistent with $\Xi$ and 
    \begin{equation*}
        \alpha = \bigwedge_{Q_0 \in {BD\choose k-1}} R^{\epsilon^{\alpha}}(x,Q_0). 
    \end{equation*}
    Given $r$, notice that $r_1 = \psi_{BD}^{r}$. 
    \item $r_2 \in S_{\Xi}^{BD}(BCDE)$ where $S_{\Xi}^{BD}(BCDE)$ is the collection of formulas $\beta$ such are consistent with $\Xi$ and
    \begin{equation*}
    \beta = \bigwedge_{t = 1}^{k- 2}\bigwedge_{\substack{P_0 \in {CE\choose t} \\ Q_0 \in {BD \choose k - 1 -t} \\ (P_0,Q_0) \neq (E,D)}} R^{\epsilon^{\beta}(Q_0,P_0)}(x,Q_0,P_0). 
    \end{equation*}
    \item $r_3 \in S_{\Xi}(CE)$ where $S_{\Xi}(CE)$ is the collection of formula $\gamma$ which are consistent with $\Xi$ and 
    \begin{equation*}
        \gamma = \bigwedge_{P_0 \in {CE \choose k -1 }} R^{\epsilon^{\beta}(\EmptySet,P_0)}(x,P_0). 
    \end{equation*}
\end{enumerate}
Since all possible triples of elements from these three sets are consistent, we may write $S^{-}_{\Xi}(BCDE) = S_{\Xi}(BD) \times S_{\Xi}^{BD}(BCDE) \times S_{\Xi}(CE)$. If $r = (r_1,r_2,r_3)$, notice that $\mathbf{1}_{\psi_{BD}^{r}}$ only depends on $r_1$, $\hat{H}_{r}(p,\bar{q})$ only depends on $r_2$, and $\hat{F}_{r}(p,\bar{q})$ only depends on $r_3$. Therefore if $K(p,\pi_{\mathbb{S}_{C}}(\bar{q})) > 0$, then a similar argument to \emph{Case 2} demonstrates the following: 
\begin{align*}
    \sum_{r \in S_{\Xi}^{-}(BCE)} \mathbf{1}_{\psi_{BD}^{r}}(p) \cdot \hat{G}_{r}(p,\bar{q}) &= \sum_{r_1 \in S_{\Xi}(BD)} \mathbf{1}_{\psi^{r_1}_{BD}}(p) \sum_{r_2 \in S_{\Xi}^{BD}(BCDE)} \hat{H}_{r_2}(p,\bar{q}) \sum_{r_3 \in S_{\Xi}(CE)} \hat{F}_{r_3}(p,\bar{q}) \\
    &= 1. \qedhere
\end{align*}
\end{proof}

\bibliographystyle{plain}
\bibliography{refs}

\end{document}